\newcommand{\R}{\mathbb{R}}
\newcommand{\C}{\mathbb{C}}
\newcommand{\N}{\mathbb{N}}
\definecolor{mywine}{rgb}{.5412,.0235,.0}
\definecolor{Gray}{gray}{0.9}
\definecolor{LightCyan}{rgb}{0.88,1,1}
\newcommand{\coolleftbrace}[2]{#1\quad\mathclap{\left\{\vphantom{\begin{matrix} #2 \end{matrix}}\right.}}
\newcommand{\coolrightbrace}[2]{\mathclap{\left.\vphantom{\begin{matrix} #1 \end{matrix}}\right\}}\quad#2}
\newcounter{mycounter}
\newtheorem{example}{Example}
\crefname{hypothesis}{Hypothesis}{Hypotheses}
\title{On the Convergence of CROP-Anderson Acceleration Method
\thanks{
{\bfseries Funding:}
This work was supported by the National Science Foundation through the NSF CAREER Award DMS–2144181 and DMS–2324958.}}
\author{Ning Wan \thanks{Department of Mathematics, Virginia Tech, Blacksburg, VA
  (\email{wning@vt.edu, amiedlar@vt.edu})}
 \and Agnieszka Mi\k{e}dlar \footnotemark[2]
}
\newcommand*{\addFileDependency}[1]{
  \typeout{(#1)}
  \@addtofilelist{#1}
  \IfFileExists{#1}{}{\typeout{No file #1.}}
}
\begin{document}

\maketitle

\begin{abstract}
  Anderson Acceleration is a well-established method that allows to speed up or encourage convergence of fixed-point iterations. It has been successfully used in a variety of applications, in particular within the Self-Consistent Field (SCF) iteration method for quantum chemistry and physics computations. In recent years, the Conjugate Residual with OPtimal trial vectors (CROP) algorithm was introduced and shown to have a better performance than the classical Anderson Acceleration with less storage needed. This paper aims to delve into the intricate connections between the classical Anderson Acceleration method and the CROP algorithm. Our objectives include a comprehensive study of their convergence properties, explaining the underlying relationships, and substantiating our findings through some numerical examples. Through this exploration, we contribute valuable insights that can enhance the understanding and application of acceleration methods in practical computations, as well as the developments of new and more efficient acceleration schemes.
\end{abstract}

\begin{keywords}
  fixed-point iteration, self-consistent field iteration, acceleration method, Anderson Acceleration, CROP
\end{keywords}

\begin{MSCcodes}
  65B05, 65B99, 65F10, 65H10
\end{MSCcodes}

\section{Introduction}
\label{sec:introduction}

Consider the following problem: 
Given a function $g: \C^n \rightarrow  \C^n$ \ find \ $x \in \C^n$ \ such that
\begin{equation}
 x    \ \  =   \ g(x),  \quad
\mbox{ or alternatively } \quad
 f(x)   \ \ =  \ 0, \\
\label{eq:MainProblem}
\end{equation}
with $f(x): =  g(x) - x$. 
Obviously, a simplest method of choice to solve this problem is the fixed-point iteration 
\begin{equation}
x^{(k+1)}  = g(x^{(k)}), \ \mbox{ for all } \ k \in \N. 
\end{equation}
Unfortunately, its 
convergence is often extremely slow.

\begin{remark}
Note that in the case of a fixed-point problem \eqref{eq:MainProblem} with iteration function $g(x)$, the associated residual (error) function $f(x)$ is defined as $f(x):= g(x) - x$.
However, this choice is not in any sense universal. If the problem of interest has a specific residual (error) function associated with it, then it will usually be used to define $f(x)$. For example, in the case of C-DIIS~\cite{Pul80,Pul82} method for solving the Hartree-Fock equations, the density matrix $D$ is updated until it commutes with the associated Fock matrix $F(D)$, i.e., $F(D)D-DF(D)=0$. The new iterates
$D^{(k+1)} = g(D^{(k)})$ are computed by Roothaan SCF process. Hence, the residual (error) function $f(D)$ is defined as the commutator $f(D) = F(D)D - DF(F)$ instead of the difference between two consecutive iterates.
\end{remark}

The problem of slow (or no) convergence of a sequence of iterates has been extensively studied by researchers since the early 20th century. Aitken's delta-squared process was introduced in 1926~\cite{Ait1926} for nonlinear sequences, and since then, people have been investigating various extrapolation and convergence acceleration methods with Shanks transformation~\cite{Shanks1955,Bre2022} providing one of the most important and fundamental ideas. Introduced as a generalization of the Aitken's delta-squared process, it laid the foundations for many acceleration schemes including the $\varepsilon$-algorithm. Some notable acceleration methods, including but not limited to, are: $\varepsilon$-algorithm, which contains scalar $\epsilon$-algorithm (SEA)~\cite{Wynn1956}, vector $\epsilon$-algorithm (VEA)~\cite{Wynn1962}, topological $\epsilon$-algorithm (TEA)\cite{Bre70}, simplified TEA (STEA)\cite{Brezinski2014}; polynomial methods, which contains minimal polynomial extrapalolation (MPE)\cite{Cabay1976}, reduced rank extrapolation (RRE)\cite{Edd79}, modified minimal polynomial extrapolation (MMPE)\cite{SidFS86}. For further reading about extrapolation and acceleration methods, see~\cite{Jbilou2000,Bre00,BreRZS18, Bre20,Bre2022}.

In the following, we will consider two mixing acceleration methods: the Anderson Acceleration~\cite{And65,And19} (also referred to as Pulay mixing~\cite{Pul80,Pul82} in computational chemistry) and the \textbf{C}onjugate \textbf{R}esidual algorithm with \textbf{OP}timal trial vectors (CROP)~\cite{Ziolkowski2008,EttJ15}. 
The CROP method, introduced in~\cite{Ziolkowski2008}, is a generalization of the Conjugate Residual (CR) method~\cite[Section 6.8]{Saad2003}, which is a well-known iterative algorithm for solving linear systems. Starting with investigating broad connections between Anderson Acceleration and CROP algorithm, our goal is to understand the convergence behavior of CROP algorithm and find out for when it may serve as an alternative to the well-established Anderson Acceleration method.


\paragraph{Contributions and Outline}

In this paper, we discuss the connection between CROP algorithm and some other well-known methods, analyze its equivalence with Anderson Acceleration method and investigate convergence for linear and nonlinear problems. The specific contributions and novelties of this work are as follows:
\begin{itemize}
\item We summarize CROP algorithm and establish a unified Anderson-type framework and show the equivalence between Anderson Acceleration method and CROP algorithm.
\item We compare CROP algorithm with some Krylov subspace methods for linear problems and with multisecant methods in the general case.
\item We illustrate the connection between CROP algorithm and Anderson Acceleration method and explain the CROP-Anderson variant.
\item We investigate the situations in which CROP and CROP-Anderson algorithms work better than Anderson Acceleration method.
\item We derive the convergence results for CROP and CROP-Anderson algorithms for linear and nonlinear problems.
\item We extend CROP and CROP-Anderson algorithms to rCROP and rCROP-Anderson, respectively, by incorporating real residuals to make them work better for nonlinear problems.
\end{itemize}

\paragraph{Previous work}

Anderson Acceleration method has a long history in mathematics literature, which goes back to Anderson's 1965 seminal paper~\cite{And65}. Over the years, the method has been successfully applied to many challenging problems~\cite{CanLB00a, CanLB00b, LinLY19, Cances2020}. An independent line of research on accelerating convergence of nonlinear solvers established by physicists and chemists has led to developments of techniques such as Pulay mixing~\cite{Pul80,Pul82}, also known as the Direct Inversion of the Iterative Subspace (DIIS) algorithm,
which is instrumental in accelerating the self-consistent field iteration method in electronic structure calculations~\cite{Ni09}.


%

It is well-known that Anderson Acceleration method has connections with the Generalized Minimal Residual Method (GMRES) algorithm~\cite[Section 6.5]{Saad2003} and is categorized as a multisecant method~\cite{WalN11,RohS11,Eye96,FanS09}. The first convergence theory for Anderson Acceleration, under the assumption of a contraction mapping, appears in~\cite{TotK15}. The convergence of Anderson(1), a topic of particular interest to many researchers, is discussed separately in works such as \cite{Reb23,Hans2022,Hans2024}. Various variants of Anderson Acceleration are explored in \cite{BanSP16, SurPP19, ChuDE20, Wei23, CheK19}. The acceleration properties of Anderson Acceleration are theoretically justified in \cite{ChuDE20,EvaPRX20,Pol21,PolRX19,Reb23}. Additionally, discussions on the depth parameter can be found in \cite{Reb23,Chen22}, while the damping parameter is examined in \cite{Pol21,Pol23,Wei23}.
%
%
We further refer readers to~\cite{Jbilou2000,Bre00,BreRZS18,Bre20,Bre2022} and references therein for detailed and more comprehensive
presentation of history, theoretical and practical results on the acceleration methods and their applications.

The paper is organized as follows: \Cref{sec:introduction} briefly explains the general idea of acceleration methods, provides historical context and presents the state-of-the-art results relevant to our studies.
In \Cref{sec:background} 
we establish necessary notation and review some background material on Anderson Acceleration method and CROP algorithm. We propose a novel unified framework that allows us to illustrate explicitly the connection between the two approaches, including the role of various parameters, and perform their theoretical analysis in \Cref{sec:AndersonVsCROP}. We present convergence analysis of CROP algorithm and discuss its truncated variants in \Cref{sec:convergence}. Finally, in \Cref{sec:numerical}, we present some numerical experiments to highlight the main results.  



\section{Background}
\label{sec:background}


In this section, we will collect some essential
background information on Anderson Acceleration and CROP methods. In the discussion below, we use subscripts to denote iterates associated with a particular method, e.g., $\displaystyle x_A$ amd $\displaystyle x_C$ indicate Anderson and CROP iterates, respectively. The quantities with no subscript indicate any of the above methods.
Throughout the paper, we will talk about \emph{equivalence} of various methods, by which we mean that under additional assumptions the iterates of either algorithm can be obtained directly from the iterates of the other.

\subsection{Anderson Acceleration}

Given Anderson iterates
$\displaystyle x_A^{(k)}, k = 0, 1, \ldots$ and corresponding residual (error) vectors, e.g., $\displaystyle f_A^{(k)} := g(x_A^{(k)}) - x_A^{(k)}$, consider weighted averages of the prior iterates, i.e., 
\begin{equation}
\label{eq:WAvg}
\displaystyle
\bar x_A^{(k)} :=\sum_{i=0}^{m_A^{(k)}}\alpha_{A,i}^{(k)}x_A^{(k-m_A^{(k)}+i)} \quad \mbox{ and } \quad \bar f_A^{(k)}:=\sum_{i=0}^{m_A^{(k)}}\alpha_{A,i}^{(k)}f_A^{(k-m_A^{(k)}+i)},
\vspace{-0.17in}
\end{equation}
with weights $\displaystyle \alpha_{A,0}^{(k)}, \ldots, \alpha_{A,m_A^{(k)}}^{(k)} \in \R$ satisfying $\displaystyle \sum\limits_{i=0}^{m_A^{(k)}}\alpha_{A,i}^{(k)}=1$, a fixed depth (history or window size) parameter $\displaystyle m$ and a truncation parameter $\displaystyle m_A^{(k)} := \min\{m,k\}$.
Note that \eqref{eq:WAvg} can be written in the equivalent matrix form, i.e.,
\begin{equation}
\label{eq:WAvgM}
\displaystyle
\bar x_A^{(k)} :=X_A^{(k)}\alpha_A^{(k)} \quad  \mbox{ and } \quad \bar f_A^{(k)}:=F_A^{(k)}\alpha_A^{(k)},
\end{equation}
with $\displaystyle \mathbb{R}^{n\times(m_A^{(k)}+1)}$ matrices
$ \displaystyle
X_A^{(k)}= \big[x_A^{(k-m_A^{(k)})},\ldots,x_A^{(k)}\big], \ F_A^{(k)}= \big[f_A^{(k-m_A^{(k)})},\ldots,f_A^{(k)}\big]$, and coefficient vector
$\displaystyle \alpha_A^{(k)}=[\alpha_{A,0}^{(k)},\ldots,\alpha_{A,m_A^{(k)}}^{(k)}]^T\in\mathbb{R}^{m_A^{(k)}+1}, \|\alpha_A^{(k)}\|_1=1.$
Anderson Acceleration achieves a faster convergence than a simple fixed-point iteration
by using the past information to generate new iterates as linear combinations of previous $m_A^{(k)}$ iterates~\cite{Pul80,Pul82,WalN11}, i.e.,
\begin{equation}
\label{eq:Anderson}
\begin{aligned}
\displaystyle
x_A^{(k+1)}  & =   \bar x_A^{(k)} + \beta^{(k)} \bar f_A^{(k)} \\ 
& =  (1-\beta^{(k)}) \sum\limits_{i=0}^{m_A^{(k)}} \alpha_{A,i}^{(k)} x_A^{(k-m_A^{(k)}+i)} + \beta^{(k)} \sum\limits_{i=0}^{m_A^{(k)}} \alpha_{A,i}^{(k)}g(x^{(k-m_A^{(k)}+i)}),
\end{aligned}
\end{equation}
with given relaxation (or damping) parameters $\beta^{(k)} \in \R^{+}$ and mixing coefficients $\alpha_{A,i}^{(k)} \in \R, \ i = 0, \ldots, m_A^{(k)}$ selected to minimize the linearized residual (error) of a new iterate
within an affine space Aff$\big\{f_A^{(k-m_A^{(k)})},\ldots, f_A^{(k)}\big\}$,
i.e., obtained as a solution of the least-squares problem
\vspace{-0.1in}
\begin{equation}
\label{eq:constainedLSA}
\displaystyle
 \min_{\alpha \in \R^{m_A^{(k)}+1}}
\|F_A^{(k)}\alpha\|_{2}^{2} \ =
 \min\limits_{\alpha_{0},\ldots, \alpha_{m_A^{(k)}}} 
 \bigg\| \sum\limits_{i=0}^{m_A^{(k)}} \alpha_{i}f_A^{(k-m_A^{(k)}+i)}\bigg\|_2^2 \quad
 \mbox{s. t.} \quad \sum\limits_{i=0}^{m_A^{(k)}} \alpha_{i} = 1.
 \end{equation}
 
%
%
Note that in the case of $\beta^{(k)} = 1$ a general formulation \eqref{eq:Anderson} introduced in the original work of Anderson~\cite{And65,And19} reduces to the Pulay mixing~\cite{Pul80,Pul82}, i.e.,
\begin{equation}
\label{eq:Pulay}
 x_A^{(k+1)} = \sum_{i=0}^{m_A^{(k)}} \alpha_{A,i}^{(k)} g(x_A^{(k-m_A^{(k)}+i)}).
 \end{equation}
Therefore, Anderson Acceleration method can be summarized in 
Algorithm \ref{alg:Anderson} and is often denoted as Anderson($m$) method.

\begin{algorithm}[htp!]
\caption{Anderson Acceleration Method (of fixed depth $m$)}
\begin{algorithmic} [1]
\Require Initial Anderson iterate $x_{A}^{(0)}$, a fixed depth $m \geq 1$ and a fixed damping parameter $\beta$
\State Compute $x_A^{(1)} = g(x_A^{(0)})$
\For{$k=1,2,\ldots$ until convergence}
	\State Set truncation and relaxation parameters $m_A^{(k)}$, $\beta^{(k)}$
    \Statex \qquad \texttt{/* e.g. $m_A^{(k)} = \min\{k,m\}$, $\beta^{(k)} = \beta$ */}
	\State Set Anderson residuals $F_A^{(k)}=[f_A^{(k-m_A^{(k)})},\ldots,f_A^{(k)}]$,\ with 
    \Statex \qquad $f_A^{(i)}: = f(x_A^{(i)}) = g(x_A^{(i)}) - x_A^{(i)}$
	\State Determine mixing coefficients, i.e.,  $\alpha_A^{(k)} :=[\alpha_{A,0}^{(k)},\ldots,\alpha_{A,m_A^{(k)}}^{(k)}]^T$ that solves Problem \eqref{eq:constainedLSA}
        \State Set $x_A^{(k+1)}=(1-\beta^{(k)})\sum\limits_{i=0}^{m_A^{(k)}}\alpha_{A,i}^{(k)}x_A^{(k-m_A^{(k)}+i)}+\beta^{(k)}\sum\limits_{i=0}^{m_A^{(k)}}\alpha_{A,i}^{(k)}g(x_A^{(k-m_A^{(k)}+i)})$
\EndFor
\Ensure $x_A^{(k)}$ that solves $f(x)=0$.
\end{algorithmic}
\label{alg:Anderson}
\end{algorithm}

\noindent
Note that in what follows we consider the case of $\beta = 1$, since $\beta \neq 1$ can be reduced to the latter by setting $\displaystyle f_\beta(x)=\beta f(x)$ \ and \ $\displaystyle g_\beta(x)=x+ \beta f(x)$.

\subsection{CROP Algorithm}

Analogously, we consider iterates $\displaystyle x_C^{(k)}$, a sequence of recorded search directions $\displaystyle \Delta x_C^{(i)} := x_C^{(i+1)} - x_C^{(i)}, \ i = k - m_C^{(k)},\ldots,k-1$, and the residual (error) vectors $\displaystyle f_C^{(k)}$ generated by CROP algorithm outlined in \Cref{alg:CROP} also called CROP($m$) algorithm.
Then the new search direction $\displaystyle \Delta  x_C^{(k)}=x_C^{(k+1)}-x_C^{(k)}$ is chosen  
in the space spanned by the prior $\displaystyle m_C^{(k)}$ search directions $\displaystyle \Delta  x_C^{(i)}, i = k - m_C^{(k)}, \ldots, k-1$ and the most recent residual (error) vector $\displaystyle f_C^{(k)}$, i.e.,
\vspace{-0.05in}
\begin{eqnarray}
\label{eq:CROPiterate}
x_C^{(k+1)} & = &x_C^{(k)} + \sum\limits_{i=k-m_C^{(k)}}^{k-1} \eta_i\Delta x_C^{(i)} + \eta_k f_C^{(k)}, \nonumber
\vspace{-0.05in}
\end{eqnarray}
with some coefficients $\eta_{k-m_C^{(k)}}, \ldots, \eta_{k} \in \R$.
%

Let us assume we have carried $\displaystyle k$ steps of the CROP algorithm, i.e., we have the subspace of optimal vectors span$\displaystyle \{x_C^{(1)}, \ldots, x_C^{(k)}\}$ at hand. From the residual vector $\displaystyle f_C^{(k)}$, we can introduce a preliminary improvement of the current iterate $x_C^{(k)}$, i.e.,
\vspace{-0.05in}
\begin{equation}\label{eq:tildex}
\displaystyle
\widetilde x_C^{(k+1)} := x_C^{(k)} + f_C^{(k)}.
\end{equation} 
Now, since \eqref{eq:tildex} is equivalent to $\displaystyle f_C^{(k)} = \widetilde x_C^{(k+1)} - x_C^{(k)}$, we can find the optimal vector $\displaystyle x_C^{(k+1)}$ within the affine subspace span$\displaystyle \{x_C^{(1)}, \ldots, x_C^{(k)}, \widetilde x_C^{(k+1)}\}$, i.e.,
%
\begin{equation}
\label{eq:ufCROP}
\displaystyle
 x_C^{(k+1)} = \sum\limits_{i=0}^{m_C^{(k+1)}-1}\alpha_{C,i}^{(k+1)}x_C^{(k+1-m_C^{(k+1)}+i)} + \alpha_{C,m_C^{(k+1)}}^{(k+1)}\widetilde x_C^{(k+1)},
\vspace{-0.2in}
\end{equation}
with $\displaystyle \sum\limits_{i=0}^{m_C^{(k+1)}}\alpha_{C,i}^{(k+1)}=1$. The estimated residual (error)  $\displaystyle f_C^{(k+1)}$ corresponding to the iterate $\displaystyle x_C^{(k+1)}$ is constructed as the linear combination of the estimated residuals (errors) of each component in \eqref{eq:ufCROP} with the same coefficients, i.e.,
%
\begin{equation}
\label{eq:uffCROP}
\displaystyle
 f_C^{(k+1)} = \sum\limits_{i=0}^{m_C^{(k+1)}-1}\alpha_{C,i}^{(k+1)}f_C^{(k+1-m_C^{(k+1)}+i)} + \alpha_{C,m_C^{(k+1)}}^{(k+1)}\widetilde f_C^{(k+1)}.
\end{equation}
Note that in general, unlike for the Anderson Acceleration method, $\displaystyle f_C^{(k+1)} \neq f(x_C^{(k+1)}$.
As before, the updates $\displaystyle x_C^{(k+1)}$ and $\displaystyle f_C^{(k+1)}$ can be written in the matrix form, i.e., 
%
\begin{equation}
\label{eq:ufCROPM}
\displaystyle
x_C^{(k+1)} = X_C^{(k+1)}\alpha_C^{(k+1)} \quad \mbox{ and } \quad f_C^{(k+1)} =F_C^{(k+1)}\alpha_C^{(k+1)},
\end{equation}
with 
$\displaystyle X_C^{(k)}=\big[x_C^{(k-m_A^{(k)})},\ldots,x_C^{(k)},\widetilde x_C^{(k+1)}\big]$ and $\displaystyle F_C^{(k)}=\big[f_C^{(k-m_C^{(k)})},\ldots,f_C^{(k)},\widetilde f_C^{(k+1)}\big]$ in $\displaystyle \mathbb{R}^{n\times(m_C^{(k+1)}+1)}$, and coefficients vector $\displaystyle  \alpha_C^{(k+1)}=\big[\alpha_{C,0}^{(k)},\ldots,\alpha_{C,m_C^{(k+1)}}^{(k+1)}\big]^T$ in $\mathbb{R}^{m_C^{(k+1)}+1}$.
Minimizing the norm of the residual (error) defined in \eqref{eq:uffCROP} results in a constrained least-squares problem
\vspace{-0.1in}
\begin{equation}
\label{eq:coefCROP}
\displaystyle 
    \min\limits_{\alpha} \|F_C^{(k)}\alpha\|_2^2 = 
    \min\limits_{\alpha_0,\ldots, \alpha_{m_C^{(k+1)}}}  \bigg\| \sum_{i=0}^{m_C^{(k+1)}-1} \alpha_if_C^{(k+1-m_C^{(k+1)}+i)}+\alpha_{m_C^{(k+1)}}\widetilde f_C^{(k+1)} \bigg\|_2^2,
    \vspace{-0.1in}
\end{equation}
such that $\sum\limits_{i=0}^{m_C^{(k+1)}}\alpha_{C,i}^{(k+1)}=1$,
with a vector of mixing coefficients $\displaystyle \alpha_{C}^{(k+1)}$ as a solution.

\begin{algorithm}[htbp!]
\caption{CROP Algorithm (of fixed depth $m$)}
\begin{algorithmic} [1]
\Require Initial CROP iterate $x_C^{(0)}$, initial fixed depth $m \geq 1$
\State Compute $ \displaystyle f_C^{(0)} = f(x_C^{(0)})$
\For{$k=0,1,2,\ldots$ until convergence}
	\State Set $\widetilde x_C^{(k+1)}=x_C^{(k)}+f_C^{(k)}$ and truncation parameter $m_C^{(k+1)}$ 
 \Statex \quad \texttt{/* e.g. $m_C^{(k+1)} = \min\{k+1,m\}$ */}
	\State Set $\displaystyle F_C^{(k+1)}=\big[f_C^{(k+1-m_C^{(k)})},\ldots,f_C^{(k)}, \widetilde f_C^{(k+1)}\big]$ \ with \ $\widetilde f_C^{(k+1)}=f(\widetilde x_C^{(k+1)})$
	\State Determine mixing coefficients, i.e., $\displaystyle \alpha_C^{(k+1)}=\big[\alpha_{C,0}^{(k+1)},\ldots,\alpha_{C,m_C^{(k+1)}}^{(k+1)}\big]^T$ that \Statex \quad \ solves Problem \eqref{eq:coefCROP}
        \vspace{-0.05in}
	\State Set $\displaystyle  x_C^{(k+1)}=\sum\limits_{i=0}^{m_C^{(k+1)}-1}\alpha_{C,i}^{(k+1)}x_C^{(k+1-m_C^{(k+1)}+i)}+\alpha_{C,m_C^{(k+1)}}^{(k+1)}\widetilde x_C^{(k+1)}$
        \vspace{-0.05in}
        \State Set CROP residuals $\displaystyle  f_C^{(k+1)}=\sum\limits_{i=0}^{m_C^{(k+1)}-1}\alpha_{C,i}^{(k+1)}f_C^{(k+1-m_C^{(k+1)}+i)}+\alpha_{C,m_C^{(k+1)}}^{(k+1)}\widetilde f_C^{(k+1)}$
\EndFor
\Ensure $x_C^{(k+1)}$ that solves $f(x)=0$.
\end{algorithmic}
\label{alg:CROP}
\end{algorithm}
\noindent
In \Cref{alg:CROP} the superscript of the iterates in step $k$ is chosen as $k+1$ instead of $k$ for a reason. In this way, the case of $m_C^{(k)} =  k$ indicates no truncation. Also, as we will see in \Cref{sec:AndersonVsCROP}, this choice enables us to understand the correspondence between classical Anderson Acceleration method and CROP algorithm.


\subsection{The Least-Squares Problem}
For both the classical Anderson Acceleration method and CROP algorithm, obtaining mixing coefficients $\displaystyle  \alpha_0^{(k)},\ldots, \alpha_{m^{(k)}}^{(k)}$ requires solving constrained least-squares problems, i.e., \eqref{eq:constainedLSA} and \eqref{eq:coefCROP}, of the same general form 
\begin{equation}
\label{eq:constainedLS}
\displaystyle 
 \min\limits_{\alpha_0^{(k)},\ldots, \alpha_{m^{(k)}}^{(k)}} 
 \bigg\| \sum\limits_{i=0}^{m^{(k)}} \alpha_{i}^{(k)}f^{(k-m^{(k)}+i)}\bigg\|_2^2 \quad \mbox{ such that } \quad
 \sum\limits_{i=0}^{m^{(k)}} \alpha_i^{(k)} = 1,
 \vspace{-0.1in}
 \end{equation}
defined within the affine subspace Aff$\displaystyle  \big\{x^{(k-m^{(k)})},\ldots, x^{(k)}\big\}$. 
In general, \eqref{eq:constainedLS} can be solved by the method of Lagrange multipliers~\cite{Pul80}. By changing the barycentric coordinates into the affine frame, \eqref{eq:constainedLS} becomes
\vspace{-0.05in}
\begin{equation}
\label{eq:unconstrainedLS}
     \min\limits_{\gamma_1^{(k)},\ldots, \gamma_{m^{(k)}}^{(k)}} 
 \bigg\|f^{(k)}- \sum\limits_{i=1}^{m^{(k)}} \gamma_{i}^{(k)}\Delta f^{(k-m^{(k)}+i)}\bigg\|_2^2,
 \vspace{-0.05in}
\end{equation}
where $\displaystyle \Delta f^{(i)}=f^{(i+1)}-f^{(i)}$. Also, $\displaystyle \alpha^{(k)}$ and $\displaystyle \gamma^{(k)}$ can be transformed, i.e., $\displaystyle \alpha_0^{(k)}=\gamma_{1}^{(k)}$, $\displaystyle  \alpha_i^{(k)}=\gamma_{i+1}^{(k)}-\gamma_{i}^{(k)}, i=1,\ldots,m^{(k)}-1$, $\displaystyle  \alpha_{m^{(k)}}^{(k)}=1-\gamma_{m^{(k)}}^{(k)}$.
Now, \eqref{eq:unconstrainedLS} can be solved by minimal equations. Let $\mathscr{F}^{(k)} :=\big[\Delta f^{(k-m^{(k)})},\ldots,\Delta f^{(k-1)}\big]\in \mathbb{R}^{n\times m^{(k)}}.$
Then, $\bar f^{(k)}=f^{(k)}-\mathscr{F}^{(k)}\gamma^{(k)}$
and the solution of \eqref{eq:unconstrainedLS} is given as
\vspace{-0.1in}
\begin{equation}\label{eq:gamma}
\gamma^{(k)}=\bigg[(\mathscr{F}^{(k)})^T\mathscr{F}^{(k)}\bigg]^{-1}(\mathscr{F}^{(k)})^Tf_k.
\vspace{-0.05in}
\end{equation}
Using \eqref{eq:unconstrainedLS} and \eqref{eq:gamma} we can write the update of Anderson Acceleration and CROP method as
\begin{equation}
\label{eq:aufxm}
\displaystyle 
 x_A^{(k+1)} = x_A^{(k)}+\beta f_A^{(k)} -(\mathscr{X}_A^{(k)}+\beta\mathscr{F}_A^{(k)})[(\mathscr{F}_A^{(k)})^T\mathscr{F}_A^{(k)}]^{-1}(\mathscr{F}_A^{(k)})^Tf_A^{(k)},
\end{equation}
and
\begin{equation}\label{eq:cuf}
\displaystyle 
 x_C^{(k+1)} = \widetilde x_C^{(k)} -\mathscr{X}_C^{(k+1)}[(\mathscr{F}_C^{(k+1)})^T\mathscr{F}_C^{(k+1)}]^{-1}(\mathscr{F}_C^{(k+1)})^T\widetilde f_C^{(k+1)},
\end{equation}
where
$$\displaystyle \mathscr{X}_A^{(k)}=\big[\Delta x_A^{(k-m_A^{(k)})},\ldots,\Delta x_A^{(k-1)}\big] ,\quad \mathscr{F}_A^{(k)}=\big[\Delta f_A^{(k-m_A^{(k)})},\ldots,\Delta f_A^{(k-1)}\big]$$
$$\displaystyle  \mathscr{X}_C^{(k+1)}=\big[\Delta x_C^{(k+1-m_C^{(k+1)})},\ldots,\Delta x_C^{(k-1)},\widetilde x_C^{(k+1)}-x_C^{(k)}\big] ,$$
$$\mathscr{F}_C^{(k+1)}=[\Delta f_C^{(k+1-m_C^{(k+1)})},\ldots,\Delta f_C^{(k-1)},\widetilde f_C^{(k+1)}-f_C^{(k)}].$$

In the practical implementation, the basis $\displaystyle \Delta f^{(k)}$ are often orthogonalized using a QR factorization which also enables the least-squares problem use the information from the previous iteration steps~\cite{WalN11,NiW10}. 
In both \Cref{alg:Anderson} and \cref{alg:CROP}, the least-squares problem (line 5) is solved using the QR factorization, however, if the least-squares problem is small, explicit pseudoinverse formulation of \eqref{eq:aufxm} and \eqref{eq:cuf} is used instead.

\subsection{The damping parameter $\beta$}

It is well-known that a good choice of the damping parameter $\beta_k$ significantly influences the convergence of Anderson Acceleration method. In the case of a fixed damping, i.e., $\beta_k=\beta$, if $\beta\neq 1$, the update formula \eqref{eq:Pulay} has the form
\vspace{-0.15in}
\begin{equation}
\nonumber
\displaystyle
x_A^{(k+1)}=(1-\beta)\sum_{i=0}^{m_A^{(k)}}\alpha_i^{(k)}x_A^{(k-m_A^{(k)}+i)}+\beta\sum_{i=0}^{m_A^{(k)}}\alpha_i^{(k)}g(x_A^{(k-m_A^{(k)}+i)}).
\end{equation}
Defining \ $\displaystyle g_\beta(x):= (1-\beta)x+\beta g(x)=x+\beta (g(x)-x)=x+\beta f(x)$ \ yields an equivalent formulation
\vspace{-0.25in}
\begin{equation}
\nonumber
\displaystyle
x_A^{(k+1)}=\sum_{i=0}^{m_A^{(k)}}\alpha_i^{(k)}g_\beta(x_{k-m_A^{(k)}+i}),
\vspace{-0.05in}
\end{equation}
which has the same form as the iterates in~\Cref{alg:Anderson}. Hence, Anderson Acceleration method with a fixed depth parameter $\beta$ can be regarded
as running~\Cref{alg:Anderson} with a new fixed-point iteration function $g_\beta$.
It is worth mentioning, that the corresponding residual (error) $f_\beta(x)=g_\beta(x)-x=\beta(g(x)-x)=\beta f(x)$, 
although different than 
$f(x)$, has the same zeros as $f(x)$.

According to \cite[Propsition 4.3]{EvaPRX20}, the residual $f
_A^{(k)}$  at step $k$ is bounded by $\beta_k$ in the following way:
\begin{equation}
    \|f_A^{(k+1)}\|_2\le \theta_{k+1}\bigg(\Big(\big(1-\beta_k\big)+L_g\beta_k\Big)\bigg)\|f_A^{(k)}\|+\sum_{j=0}^{m_A}\mathcal{O}\big(\|f_A^{(k-j)}\|_2^2\big),
\end{equation}
where $L_g$ is the Lipchitz constant of the mapping $g$, and $\theta_{k+1}=\|\bar f_A^{(k+1)}\|_2/\|f_A^{(k+1)}\|_2$.

\subsection{The truncation parameter $m^{(k)}$}

In general, the subspace truncation parameter $\displaystyle m^{(k)} \geq 1$ ($\displaystyle m_A^{(k)}$ or $\displaystyle m_C^{(k)}$) determines the dimension of the search space for the next trial vector ($\displaystyle x_A^{(k+1)}$ or $x_C^{(k+1)}$), e.g., the size of the least-squares problem \ref{eq:constainedLS}. Hence, the affine subspace in iteration step $\displaystyle k$ involves $\displaystyle m^{(k)}+1$ vectors.
Obviously, $m^{(k)} = 0$ corresponds to the fixed-point iteration method.

Note that for Anderson Acceleration method and CROP algorithm, the situations are slightly different. In Anderson Acceleration, at step $\displaystyle k$ iterate $\displaystyle x_A^{(k+1)}$ is computed from iterates $\displaystyle x_A^{(k-m^{(k)})},\ldots,x_A^{(k)}$, whereas in CROP algorithm, $\displaystyle x_C^{(k+1)}$ is computed using vectors $\displaystyle x_C^{(k+1-m^{(k+1)})},\ldots,x_C^{(k)},\widetilde x_C^{(k+1)}$. Therefore, we can immediately see that Anderson(1) is a mixing method that needs the historical information in every step, while CROP(1) is a fixed-point iteration, i.e., in Anderson(1) step to compute $\displaystyle x_A^{(k+1)}$ we need access to $\displaystyle x_A^{(k-1)}$ and $\displaystyle x_A^{(k)}$, whereas in the case of CROP(1) we only need $\displaystyle x_C^{(k)}$ to determine next iterate $\displaystyle x_C^{(k+1)}$.

The proper choice of $m^{(k)}$ is very important as it affects the convergence speed, time and overall complexity of these methods. For the fixed depth methods, a fixed parameter $m$ (e.g. $m \le 5$) is set before the iteration starts and the truncation parameter is chosen at each iteration step $k$ to be $m^{(k)}=\min\{k,m\}$.
For further discussions regarding various choices of $m^{(k)}$ see \cite{Reb23,Hans2022}. Values $m\le 5$ are often used in practice, in particular $m=1$. A larger $m$ is usually unnecessary and will cause the least-squares problem to be hard to solve. In what follows, we will briefly talk about values of $m$ for CROP method.

Restarting and adaptation are used to maintain the dimension of the subspace by choosing parameter $m^{(k)}$ in each step~\cite{ChuDE20}. Also, some techniques like filtering~\cite{Pol23} do not use the most recent $m^{(k)}$ trial vectors and residuals. However, they usually need to store a list of trial vectors and residuals of size $m^{(k)}$.

\section{Anderson Acceleration vs CROP Method}
\label{sec:AndersonVsCROP}
The aim of this section is to establish a unified framework which will enable us to show that Anderson Acceleration method and CROP algorithm are equivalent. By showing the equivalence between the full versions of these two methods, some new variants, i.e., CROP-Anderson and rCROP, can be developed
and modified to get the real residuals.


\begin{theorem}
\label{thm:AequivC}
Let us consider applying Anderson Acceleration method ($\beta_k=1$) and CROP algorithm to the nonlinear problem $f(x) = 0$ with initial values $x_A^{(0)} = x_C^{(0)}$ and no truncation ($m^{(k)}=k$). Then, for $k=0,1,\ldots$ until convergence ($f_C^{(0)} \neq0$)
\[
x^{(k)}_{C} = \bar x_A^{(k)}, \ f^{(k)}_{C} = \bar f_A^{(k)}
\quad \mbox{ and } \quad
x^{(k+1)}_{A} = \widetilde x^{(k+1)}_{C}, \ f^{(k+1)}_{A} = \widetilde f^{(k+1)}_{C},
\]
with $\bar x^{(k)}_A$, $x^{(k+1)}_A$ defined as in \eqref{eq:WAvg} and \eqref{eq:Anderson},
and $\widetilde x^{(k+1)}_{C}$, $x^{(k)}_C$ as in \eqref{eq:tildex} and \eqref{eq:ufCROP}.
\end{theorem}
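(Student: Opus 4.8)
The plan is to prove the four identities of \Cref{thm:AequivC} simultaneously by strong induction on $k$, the key point being that under the stated hypotheses ($\beta^{(k)}\equiv 1$, $x_A^{(0)}=x_C^{(0)}$, $m^{(k)}=k$) Anderson Acceleration and CROP solve, at every step, \emph{the same} constrained least-squares problem, only written in two different but equivalent affine parametrizations. Abbreviate by $P(k)$ the pair $x_C^{(k)}=\bar x_A^{(k)}$, $f_C^{(k)}=\bar f_A^{(k)}$, and by $Q(k)$ the pair $x_A^{(k+1)}=\widetilde x_C^{(k+1)}$, $f_A^{(k+1)}=\widetilde f_C^{(k+1)}$. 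For $k=0$ the truncation indices vanish, so $\bar x_A^{(0)}=x_A^{(0)}=x_C^{(0)}$ and hence $\bar f_A^{(0)}=f_A^{(0)}=f(x_C^{(0)})=f_C^{(0)}$, i.e.\ $P(0)$. Moreover $P(k)\Rightarrow Q(k)$ for every $k$: by \eqref{eq:Anderson} with $\beta^{(k)}=1$ one has $x_A^{(k+1)}=\bar x_A^{(k)}+\bar f_A^{(k)}$, while \eqref{eq:tildex} gives $\widetilde x_C^{(k+1)}=x_C^{(k)}+f_C^{(k)}$, and these coincide under $P(k)$; since $f_A^{(k+1)}=f(x_A^{(k+1)})$ and $\widetilde f_C^{(k+1)}=f(\widetilde x_C^{(k+1)})$, the residual identity follows as well (in particular $Q(0)$ holds because $x_A^{(1)}=g(x_A^{(0)})=x_A^{(0)}+f_A^{(0)}$).

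The inductive step $\big(P(0),\dots,P(k),Q(k)\big)\Rightarrow P(k+1)$ is the heart of the proof. With $m_C^{(k+1)}=k+1$, the CROP step with counter $k$ picks $\alpha_C^{(k+1)}$ by solving \eqref{eq:coefCROP} over affine combinations of $f_C^{(0)},\dots,f_C^{(k)},\widetilde f_C^{(k+1)}$, which by $P(0),\dots,P(k)$ and $Q(k)$ are exactly $\bar f_A^{(0)},\dots,\bar f_A^{(k)},f_A^{(k+1)}$. Now each $\bar f_A^{(i)}=\sum_{l=0}^{i}\alpha_{A,l}^{(i)}f_A^{(l)}$ is itself an affine combination of $f_A^{(0)},\dots,f_A^{(i)}$ (the weights sum to one, by \eqref{eq:WAvg}); substituting these expansions turns any feasible CROP combination $\sum_{i=0}^{k}\alpha_i\bar f_A^{(i)}+\alpha_{k+1}f_A^{(k+1)}$ into $\sum_{l=0}^{k+1}c_l f_A^{(l)}$ with $\sum_l c_l=\sum_i\alpha_i=1$, i.e.\ into a feasible point of the Anderson problem \eqref{eq:constainedLSA} at level $k+1$. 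The associated linear map $\alpha\mapsto c$ is upper triangular with unit column sums and (in the nondegenerate case) nonzero diagonal, hence invertible, so the two problems run over the same affine set $\mathrm{Aff}\{f_A^{(0)},\dots,f_A^{(k+1)}\}$ and therefore share the unique minimum-norm solution: $f_C^{(k+1)}=\bar f_A^{(k+1)}$. Finally, CROP's minimizer $\alpha_C^{(k+1)}$ maps under this change of coordinates to $c=\alpha_A^{(k+1)}$, and since $\bar x_A^{(i)}$ in \eqref{eq:WAvg} uses the same weights as $\bar f_A^{(i)}$, applying $P(0),\dots,P(k)$ and $Q(k)$ to the iterates yields
\[
x_C^{(k+1)}=\sum_{i=0}^{k}\alpha_i x_C^{(i)}+\alpha_{k+1}\widetilde x_C^{(k+1)}=\sum_{l=0}^{k+1}c_l x_A^{(l)}=\bar x_A^{(k+1)},
\]
which establishes $P(k+1)$; then $Q(k+1)$ follows as above, and the induction is complete.

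I expect the main obstacle to be precisely this crux step: reconciling the two least-squares problems, which at first sight are posed over different affine hulls ($\mathrm{Aff}\{\bar f_A^{(0)},\dots,\bar f_A^{(k)},f_A^{(k+1)}\}$ versus $\mathrm{Aff}\{f_A^{(0)},\dots,f_A^{(k+1)}\}$), and showing not only that one and the same weight vector solves both but also that it produces matching iterates, because the same weights multiply the $x$'s and the $f$'s in \eqref{eq:WAvg}. This needs the ``weights sum to one'' normalization in \eqref{eq:WAvg} together with a nondegeneracy assumption --- uniqueness of the least-squares solutions, equivalently full column rank of the difference matrices $\mathscr{F}$ appearing in \eqref{eq:unconstrainedLS}--\eqref{eq:gamma}, so that in particular $\alpha_{A,l}^{(l)}\neq 0$ --- which I would state explicitly; it is tacitly in force since the iteration is run only until convergence. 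The remaining parts, namely the base case and $P(k)\Rightarrow Q(k)$, are direct substitutions into \eqref{eq:Anderson}, \eqref{eq:tildex} and the steps of \Cref{alg:CROP}.
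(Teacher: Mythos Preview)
Your proposal is correct and follows essentially the same induction as the paper's proof: both expand $f_C^{(k+1)}$ through the induction hypothesis into an affine combination of $f_A^{(0)},\dots,f_A^{(k+1)}$ and then invoke uniqueness of the least-squares minimizer over $\mathrm{Aff}\{f_A^{(0)},\dots,f_A^{(k+1)}\}$ to conclude $f_C^{(k+1)}=\bar f_A^{(k+1)}$. Your treatment is in fact slightly more careful: the paper simply asserts that $f_C^{(k+1)}$ solves ``the least-squares problem in the same affine space,'' whereas you make explicit that the change of coordinates $\alpha\mapsto c$ is upper triangular with nonzero diagonal (under the nondegeneracy $\alpha_{A,l}^{(l)}\neq 0$), so that the CROP and Anderson feasible sets genuinely coincide rather than one merely being contained in the other.
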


\begin{proof}
The proof follows by induction. Since $x_A^{(0)} = x_C^{(0)}$, then for $k=0$,
$x_C^{(0)}=\bar x_A^{(0)}$, $f_C^{(0)}=f(x_C^{(0)})=f(x_A^{(0)})=f_A^{(0)}=\bar f_A^{(0)}$.
Hence, $\widetilde x_C^{(1)}=x_C^{(0)}+f_C^{(0)}=x_A^{(0)}+f_A^{(0)}=x_A^{(1)}$ and
$\widetilde f_C^{(1)}=f(\widetilde x_C^{(1)})=f(x_A^{(1)})=f_A^{(1)}$. 
Assume that
for all $\ell \le k$, $x_C^{(\ell)} = \bar x_A^{(\ell)}$, $f_C^{(\ell)}=\bar f_A^{(\ell)}$, $\widetilde x_C^{(\ell+1)}=x_A^{(\ell+1)}$ and $\widetilde f_C^{(\ell+1)}=f_A^{(\ell+1)}$.
Then, for $k+1$
\begin{equation}
\nonumber
\begin{split}
f_C^{(k+1)}&=\sum_{i=0}^{m^{(k+1)}-1}\alpha_{C,i}^{(k+1)}f_C^{(k+1-m_C^{(k+1)}+i)}+\alpha_{C,m^{(k+1)}}^{(k+1)}\widetilde f_C^{(k+1)}\\
    &=\sum_{i=0}^{k}\alpha_{C,i}^{(k+1)}\bar f_A^{(i)}+\alpha_{C,k+1}^{(k+1)}f_A^{(k+1)}=\sum_{i=0}^{k}\alpha_{C,i}^{(k+1)}\sum_{j=0}^{i} \alpha_{A,j}^{(i)} f_A^{(j)}+\alpha_{C,k+1}^{(k+1)}f_A^{(k+1)}\\
    &=\sum_{j=0}^{k}\sum_{i=j}^{k}\alpha_{C,i}^{(k+1)} \alpha_{A,j}^{(i)} f_A^{(j)}+\alpha_{C,k+1}^{(k+1)}f_A^{(k+1)}.
\end{split}
\end{equation}
Let $\displaystyle \widehat\alpha_j^{(k+1)}=\sum\limits_{i=j}^{k}\alpha_{C,i}^{(k+1)} \alpha_{A,j}^{(i)}$, $j=1,\ldots,k$, and $\displaystyle \widehat\alpha_{k+1}^{(k+1)}=\alpha_{C,k+1}^{(k+1)}$, then
\begin{equation}
\displaystyle
\nonumber
f_C^{(k+1)}=\sum_{j=0}^{k+1}\widehat\alpha_{j}^{(k+1)} f_A^{(j)}
\quad \mbox{ and } \quad \sum_{j=0}^{k+1}\widehat\alpha_j^{(k+1)}=\sum_{i=0}^{k+1}\alpha_{C,i}^{(k+1)}=1.
\end{equation}
\noindent
Since $\displaystyle f_C^{(k+1)}=\sum\limits_{i=0}^{k+1}\widehat\alpha_{i}^{(k+1)} f_A^{(i)}$ \ and \
$\displaystyle \bar f_A^{(k+1)} = \sum\limits_{i=0}^{k+1} \alpha_{A,i}^{(k+1)} f_A^{(i)}$ are the solutions of the least-squares problem in the same affine space Aff$\{f_A^{(0)},\ldots,f_A^{(k+1)}\}$, we know that $\widehat\alpha_{i}^{(k+1)}=\alpha_{A,i}^{(k+1)}$, and $f_C^{(k+1)}=\bar f_A^{(k+1)}$. Also, 
\begin{eqnarray}
\displaystyle x_C^{(k+1)} & = & \sum\limits_{i=0}^{k+1}\widehat\alpha_{i}^{(k+1)} x_A^{(i)} = \sum\limits_{i=0}^{k+1} \alpha_{A,i}^{(k+1)} f_A^{(i)}=\bar x_A^{(k+1)},\nonumber \\
\widetilde x_C^{(k+2)} & = &x_C^{(k+1)}+f_C^{(k+1)}=\bar x_A^{(k+1)}+\bar f_A^{(k+1)}=x_A^{(k+2)}, \nonumber
\end{eqnarray}
and 
$\displaystyle \widetilde f_C^{(k+2)}=f(\widetilde x_C^{(k+2)})=f(x_A^{(k+2)})=f_A^{(k+2)}$.
Therefore by induction $\displaystyle x_C^{(k)} = \bar x_A^{(k)}$, \ $\displaystyle f_C^{(k)}=\bar f_A^{(k)}$, \ $\displaystyle \widetilde x_C^{(k+1)}=x_A^{(k+1)}$ and $\displaystyle \widetilde f_C^{(k+1)}=f_A^{(k+1)}$ for all $k\in\mathbb{N}$.
\end{proof}

\begin{remark}
    Since the CROP residual $f_C^{(k)}$ may become exactly $0$, the \Cref{alg:CROP} can break down. However, before the actual breakdown occurs, \Cref{thm:AequivC} holds. If the CROP algorithm breaks down at step $k$ with $f_C^{(k)} = 0$, it stops, whereas in the case of Anderson Acceleration $x_A^{(k+1)} = x_A^{(k)}$ and the stagnation occurs.
\end{remark}
To illustrate a connection between Anderson Acceleration method and CROP algorithm, in~\Cref{fig:AAvsCROP},
we consider one and a half step of Anderson Acceleration from iterate $\displaystyle x_A^{(k)}$ to $\displaystyle \bar x_A^{(k+1)}$. By the equivalence of the least-squares problems in ~\eqref{eq:constainedLSA} and \eqref{eq:coefCROP}, block (II) in~\Cref{fig:AAvsCROP} is equivalent to a single step of CROP algorithm. Changing the weighted averages of Anderson Acceleration to CROP type averages enables us to obtain a different scheme illustrated in~\Cref{fig:CROPvsAA}. 
\begin{figure}[htbp!]
\[
\begin{matrix}
\coolleftbrace{(I)}{
x_A^{(k)} = \bar x_A^{(k-1)}+\bar f_A^{(k-1)}\\
\downarrow\\
f_A^{(k)} :=f(x_A^{(k)})\\
\downarrow\\
\mbox{Find } \alpha_A^{(k)} \mbox{ that minimizes } \|\bar f_A^{(k)}\|_2 \\
\mbox{ with } \bar f_A^{(k)} :=\sum\limits_{i=0}^{m_A^{(k)}}\alpha_{A,i}^{(k)}f_A^{(k-m_A^{(k)}+i)}\\
\downarrow\\
\bar x_A^{(k)}=\sum\limits_{i=0}^{m_A^{(k)}}\alpha_{A,i}^{(k)}x_A^{(k-m_A^{(k)}+i)}\\
\downarrow\\
x_A^{(k+1)}=\bar x_A^{(k)}+\bar f_A^{(k)}
}\\
\vphantom{\downarrow}\\
\vphantom{f_A^{(k+1)}=f(x_A^{(k+1)})}\\
\vphantom{\downarrow}\\
\vphantom{\mbox{Find } \alpha_A^{(k)} \mbox{ that minimizes } \|\bar f_A^{(k)}\|_2} \\ \vphantom{\mbox{ with } \bar f_A^{(k)} :=\sum\limits_{i=0}^{m_A^{(k)}}\alpha_{A,i}^{(k)}f_A^{(k-m_A^{(k)}+i)}}\\
\vphantom{\downarrow}\\
\vphantom{\bar x_A^{(k+1)}=\sum\limits_{i=0}^{m_A^{(k+1)}}\alpha_{A,i}^{(k+1)}x_A^{(k+1-m_A^{(k+1)}+i)}}
\end{matrix}%
\begin{matrix}
x_A^{(k)} = \bar x_A^{(k-1)}+\bar f_A^{(k-1)}\\
\downarrow\\
f_A^{(k)} :=f(x_A^{(k)})\\
\downarrow\\
\mbox{Find } \alpha_A^{(k)} \mbox{ that minimizes } \|\bar f_A^{(k)}\|_2 \\ \mbox{ with } \bar f_A^{(k)} =\sum\limits_{i=0}^{m_A^{(k)}}\alpha_{A,i}^{(k)}f_A^{(k-m_A^{(k)}+i)}\\
\downarrow\\
\bar x_A^{(k)}=\sum\limits_{i=0}^{m_A^{(k)}}\alpha_{A,i}^{(k)}x_A^{(k-m_A^{(k)}+i)}\\
\downarrow\\
x_A^{(k+1)}=\bar x_A^{(k)}+\bar f_A^{(k)}\\
\downarrow\\
f_A^{(k+1)}:=f(x_A^{(k+1)})\\
\downarrow\\
\mbox{Find } \alpha_A^{(k+1)} \mbox{ that minimizes } \|\bar f_A^{(k+1)}\|_2 \\ \mbox{ with } \ \bar f_A^{(k+1)}=\sum\limits_{i=0}^{m_A^{(k+1)}}\alpha_{A,i}^{(k+1)}f_A^{(k+1-m_A^{(k+1)}+i)}\\
\downarrow\\
\bar x_A^{(k+1)}=\sum\limits_{i=0}^{m_A^{(k+1)}}\alpha_{A,i}^{(k+1)}x_A^{(k+1-m_A^{(k+1)}+i)}
\end{matrix}
\begin{matrix}
\vphantom{x_A^{(k)} = \bar x_A^{(k-1)}+\bar f_A^{(k-1)}}\\
\vphantom{\downarrow}\\
\vphantom{f_A^{(k)} :=f(x_A^{(k)})}\\
\vphantom{\downarrow}\\
\vphantom{\mbox{Find } \alpha_A^{(k)} \mbox{ that minimizes } \|\bar f_A^{(k)}\|_2}\\ \vphantom{\mbox{ with } \bar f_A^{(k)} :=\sum\limits_{i=0}^{m_A^{(k)}}\alpha_{A,i}^{(k)}f_A^{(k-m_A^{(k)}+i)}}\\
\vphantom{\downarrow}\\
\coolrightbrace{\bar x_A^{(k)}=\sum\limits_{i=0}^{m_A^{(k)}}\alpha_{A,i}^{(k)}x_A^{(k-m_A^{(k)}+i)}\\
\downarrow\\
x_A^{(k+1)}=\bar x_A^{(k)}+\bar f_A^{(k)}\\
\downarrow\\
f_A^{(k+1)}:=f(x_A^{(k+1)})\\
\downarrow\\
\mbox{Find } \alpha_A^{(k+1)} \mbox{ that minimizes } \|\bar f_A^{(k+1)}\|_2 \\ \mbox{ with } \ \bar f_A^{(k+1)}=\sum\limits_{i=0}^{m_A^{(k+1)}}\alpha_{A,i}^{(k+1)}f_A^{(k+1-m_A^{(k+1)}+i)}\\
\downarrow\\
\bar x_A^{(k+1)}=\sum\limits_{i=0}^{m_A^{(k+1)}}\alpha_{A,i}^{(k+1)}x_A^{(k+1-m_A^{(k+1)}+i)}
}{(II)}
\end{matrix}%
\]
\caption{Anderson Acceleration in Anderson notation.}
\label{fig:AAvsCROP}
\vspace{-0.1in}
\end{figure}

\begin{figure}[ht]
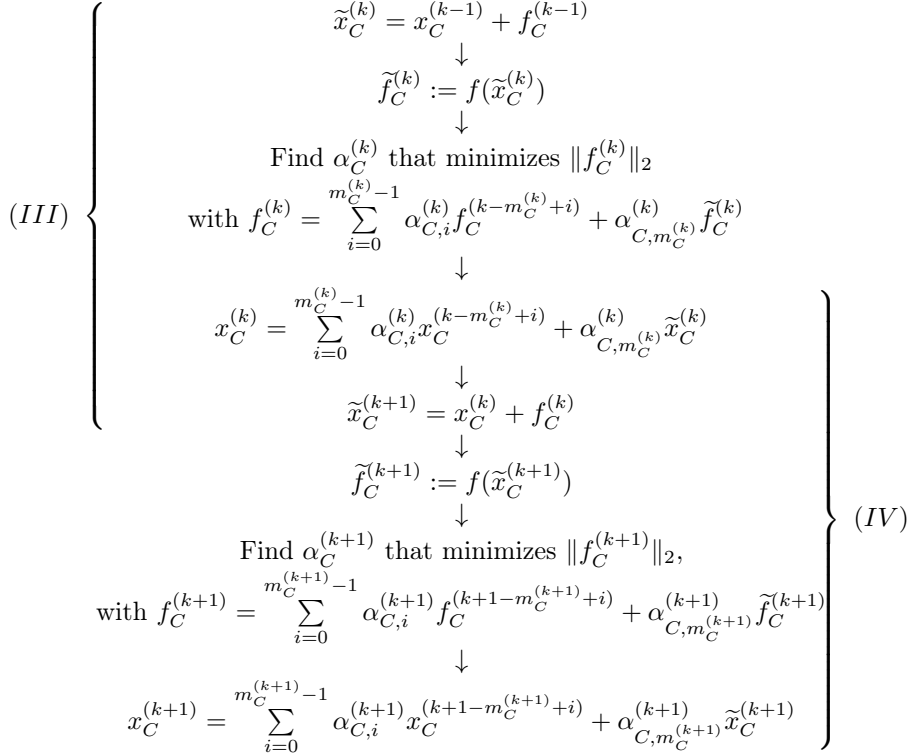

\[
\begin{matrix}
\coolleftbrace{(III)}{\widetilde x_C^{(k)}=x_C^{(k-1)}+f_C^{(k-1)}\\
\downarrow\\
\widetilde f_C^{(k)}=f(\widetilde x_C^{(k)})\\
\downarrow\\
\mbox{Find } \alpha_C^{(k)} \mbox{ that minimizes } \|f_C^{(k)}\|_2 \\
\mbox{ with }  f_C^{(k)}=\sum\limits_{i=0}^{m_C^{(k)}-1}\alpha_{C,i}^{(k)} f_C^{(k-m_C^{(k)}+i)}+\alpha_{C,m_C^{(k)}}^{(k)} \widetilde f_C^{(k)}\\
\downarrow\\
x_C^{(k)}=\sum\limits_{i=0}^{m_C^{(k)}-1}\alpha_{C,i}^{(k)} x_C^{(k-m_C^{(k)}+i)}+\alpha_{C,m_C^{(k)}}^{(k)} \widetilde x_C^{(k)}\\
\downarrow\\
\widetilde x_C^{(k+1)}=x_C^{(k)}+f_C^{(k)}
}\\
\vphantom{\downarrow}\\
\vphantom{\widetilde f_C^{(k+1)}=f(\widetilde x_C^{(k+1)})}\\
\vphantom{\downarrow}\\
\vphantom{\mbox{Find } \alpha_C^{(k+1)} \mbox{ that minimizes } \|f_C^{(k+1)}\|_2, }\\
\vphantom{\mbox{with } f_C^{(k+1)}=\sum\limits_{i=0}^{m_C^{(k+1)}-1}\alpha_i^{(k+1)} f_C^{(k+1-m_C^{(k+1)}+i)}+\alpha_{m_C^{(k+1)}}^{(k+1)} \widetilde f_C^{(k+1)}}\\
\vphantom{\downarrow}\\
\vphantom{x_C^{(k+1)}=\sum\limits_{i=0}^{m_C^{(k+1)}-1}\alpha_{C,i}^{(k+1)} x_C^{(k+1-m_C^{(k+1)}+i)}+\alpha_{C,m_C^{(k+1)}}^{(k+1)} \widetilde x_C^{(k+1)}}
\end{matrix}%
\begin{matrix}
\widetilde x_C^{(k)}=x_C^{(k-1)}+f_C^{(k-1)}\\
\downarrow\\
\widetilde f_C^{(k)}:=f(\widetilde x_C^{(k)})\\
\downarrow\\
\mbox{Find } \alpha_C^{(k)} \mbox{ that minimizes } \|f_C^{(k)}\|_2 \\ \mbox{ with }  f_C^{(k)}=\sum\limits_{i=0}^{m_C^{(k)}-1}\alpha_{C,i}^{(k)} f_C^{(k-m_C^{(k)}+i)}+\alpha_{C,m_C^{(k)}}^{(k)} \widetilde f_C^{(k)}\\
\downarrow\\
x_C^{(k)}=\sum\limits_{i=0}^{m_C^{(k)}-1}\alpha_{C,i}^{(k)} x_C^{(k-m_C^{(k)}+i)}+\alpha_{C,m_C^{(k)}}^{(k)} \widetilde x_C^{(k)}\\
\downarrow\\
\widetilde x_C^{(k+1)}=x_C^{(k)}+f_C^{(k)}\\
\downarrow\\
\widetilde f_C^{(k+1)}:=f(\widetilde x_C^{(k+1)})\\
\downarrow\\
\mbox{Find } \alpha_C^{(k+1)} \mbox{ that minimizes } \|f_C^{(k+1)}\|_2, \\ \mbox{with } f_C^{(k+1)}=\sum\limits_{i=0}^{m_C^{(k+1)}-1}\alpha_{C,i}^{(k+1)} f_C^{(k+1-m_C^{(k+1)}+i)}+\alpha_{C,m_C^{(k+1)}}^{(k+1)} \widetilde f_C^{(k+1)}\\
\downarrow\\
x_C^{(k+1)}=\sum\limits_{i=0}^{m_C^{(k+1)}-1}\alpha_{C,i}^{(k+1)} x_C^{(k+1-m_C^{(k+1)}+i)}+\alpha_{C,m_C^{(k+1)}}^{(k+1)} \widetilde x_C^{(k+1)}
\end{matrix}
\begin{matrix}
\vphantom{\widetilde x_C^{(k)}=x_C^{(k-1)}+f_C^{(k-1)}}\\
\vphantom{\downarrow}\\
\vphantom{\widetilde f_C^{(k)}=f(\widetilde x_C^{(k)})}\\
\vphantom{\downarrow}\\
\vphantom{\mbox{Find } \alpha_C^{(k)} \mbox{ that minimizes } \|f_C^{(k)}\|_2} \\ \vphantom{\mbox{ with }  f_C^{(k)}=\sum\limits_{i=0}^{m_C^{(k)}-1}\alpha_{C,i}^{(k)} f_C^{(k-m_C^{(k)}+i)}+\alpha_{C,m_C^{(k)}}^{(k)} \widetilde f_C^{(k)}}\\
\vphantom{\downarrow}\\
\coolrightbrace{x_C^{(k)}=\sum\limits_{i=0}^{m_C^{(k)}-1}\alpha_{C,i}^{(k)} x_C^{(k-m_C^{(k)}+i)}+\alpha_{C,m_C^{(k)}}^{(k)} \widetilde x_C^{(k)}\\
\downarrow\\
\widetilde x_C^{(k+1)}=x_C^{(k)}+f_C^{(k)}\\
\downarrow\\
\widetilde f_C^{(k+1)}=f(\widetilde x_C^{(k+1)})\\
\downarrow\\
\mbox{Find } \alpha_C^{(k+1)} \mbox{ that minimizes } \|f_C^{(k+1)}\|_2, \\ \mbox{ with } f_C^{(k+1)}=\sum\limits_{i=0}^{m_C^{(k+1)}-1}\alpha_i^{(k+1)} f_C^{(k+1-m_C^{(k+1)}+i)}+\alpha_{m_C^{(k+1)}}^{(k+1)} \widetilde f_C^{(k+1)}\\
\downarrow\\
x_C^{(k+1)}=\sum\limits_{i=0}^{m_C^{(k+1)}-1}\alpha_{C,i}^{(k+1)} x_C^{(k+1-m_C^{(k+1)}+i)}+\alpha_{C,m_C^{(k+1)}}^{(k+1)} \widetilde x_C^{(k+1)}
}{(IV)}
\end{matrix}%
\]
\caption{CROP method in CROP notation.}
\label{fig:CROPvsAA}
\vspace{-0.1in}
\end{figure}

\noindent
Hence, blocks (I - IV) in~\Cref{fig:AAvsCROP,fig:CROPvsAA} can be associated with four different algorithms:
block (I) illustrates Anderson Acceleration steps \eqref{eq:WAvg}--\eqref{eq:Anderson}; block (II) is equivalent to steps \eqref{eq:tildex}--\eqref{eq:ufCROP} of CROP algorithm; block (III) is equivalent to the Anderson Acceleration steps \eqref{eq:WAvg}--\eqref{eq:Anderson} and
block (IV) illustrates CROP algorithm steps \eqref{eq:tildex}--\eqref{eq:ufCROP}. Note that following the notation of \Cref{fig:AAvsCROP,fig:CROPvsAA}
allows us to use the same variables $\displaystyle x^{(k)}$ and $\displaystyle \bar x^{(k)}$ in all four different algorithms (blocks I -- IV) and write them all in the unified framework in terms of previous iterates of Anderson Acceleration method. However, we can also express all quantities of interest in terms of CROP past information. Since CROP algorithm has better behavior while keeping less historical information, we can run Anderson Acceleration executing block (III), which is called CROP generalization of Anderson Acceleration~\cite{EttJ15} or, for the simplicity, CROP-Anderson method.

CROP-Anderson method, denoted as $CA$, follows the steps of \Cref{alg:CROP}, with the only difference of checking the size of residuals (errors) at each iteration step and the final output result being $\displaystyle \widetilde x_C^{(k+1)}$.
\begin{algorithm}[htbp!]
\caption{CROP-Anderson method (of fixed depth $m$)}
\begin{algorithmic} [1]
\Require Initial CROP-Anderson iterate $x^{(0)}$, initial fixed depth $m \geq 1$
\State Compute $f_C^{(0)} = f(x_C^{(0)})$
\For{$k=0,1,2,\ldots$}
	\State Set $\widetilde x_C^{(k+1)}=x_C^{(k)}+f_C^{(k)}$ \ and 
 \ $\widetilde f_C^{(k+1)}=f(\widetilde x_C^{(k+1)})$
        \If {$\widetilde f_C^{(k+1)}<tol$} 
            \State break
        \EndIf
	\State Set truncation parameter $m_C^{(k+1)}$ \Statex \quad \ \texttt{/* e.g. $m_C^{(k+1)} = \min\{k+1,m\}$ */}
	\State Set $F_C^{(k+1)}=[f_C^{(k-m_C^{(k)})},\ldots,f_C^{(k)}, \widetilde f_C^{(k+1)}]$.
	\State Determine mixing coefficients, i.e., $\alpha_C^{(k+1)}=[\alpha_{C,0}^{(k+1)},\ldots,\alpha_{C,m_C^{(k+1)}}^{(k+1)}]^T$ that
 \Statex \quad \; solves Problem \eqref{eq:coefCROP}
	\State Set $x_C^{(k+1)}=\sum\limits_{i=0}^{m_C^{(k+1)}-1}\alpha_{C,i}^{(k+1)}x_C^{(k+1-m_C^{(k+1)}+i)}+\alpha_{C,m_C^{(k+1)}}^{(k+1)}\widetilde x_C^{(k+1)}$
        \State Set $f_C^{(k+1)}=\sum\limits_{i=0}^{m_C^{(k+1)}-1}\alpha_{C,i}^{(k+1)}f_C^{(k+1-m_C^{(k+1)}+i)}+\alpha_{C,m_C^{(k+1)}}^{(k+1)}\widetilde f_C^{(k+1)}$
\EndFor
\Ensure $\widetilde x_C^{(k+1)}$ that solves $f(x)=0$.
\end{algorithmic}
\label{alg:CROPAnd}
\end{algorithm}
Although we can express the steps of CROP-Anderson method using CROP algorithm ($C$) notation,  we can easily use the alternative CROP-Anderson ($CA$) formulation, i.e., we first set $\displaystyle x^{(k)}_{CA} = \widetilde x^{(k)}_{C}$, $\displaystyle f^{(k)}_{CA} = \widetilde f^{(k)}_{C}$, $\displaystyle \bar x^{(k)}_{CA} = x^{(k)}_{C}$ and $\displaystyle \bar f^{(k)}_{CA} = f^{(k)}_{C}$. Then by \Cref{thm:AequivC}, we get the following corollary.
\vspace{0.05in}
\begin{corollary}
    Let us consider applying Anderson Acceleration method ($\displaystyle \beta_k=1$) and CROP-Anderson algorithm to the nonlinear problem $\displaystyle f(x) =0$ with initial values $\displaystyle x_A^{(0)} = x_C^{(0)} = x^{(0)}_{CA}$ and no truncation ($\displaystyle m^{(k)}=k$). Then, for $\displaystyle k=0,1,\ldots$
\[
\displaystyle x^{(k)}_{CA} = x_A^{(k)}, \ f^{(k)}_{CA} = f_A^{(k)}
\quad \mbox{ and } \quad \bar x^{(k)}_{CA} = \bar x_A^{(k)}, \ \bar f^{(k)}_{CA} = \bar f_A^{(k)}.
\]
\end{corollary}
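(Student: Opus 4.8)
The plan is to reduce the corollary to \Cref{thm:AequivC} by a pure relabeling argument, so the only work is bookkeeping. First I would observe that \Cref{alg:CROPAnd} (CROP-Anderson) produces \emph{exactly} the same sequences $x_C^{(k)}$, $f_C^{(k)}$, $\widetilde x_C^{(k+1)}$, $\widetilde f_C^{(k+1)}$ as \Cref{alg:CROP} (CROP): lines 3 and 7--10 of \Cref{alg:CROPAnd} are identical to lines 3 and 5--7 of \Cref{alg:CROP}, and they solve the same least-squares problem \eqref{eq:coefCROP}; the only differences are the stopping test on $\widetilde f_C^{(k+1)}$ (line 4) and the fact that the returned quantity is $\widetilde x_C^{(k+1)}$ rather than $x_C^{(k+1)}$. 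Neither of these affects the recursion, so for every $k$ before termination the CROP-Anderson internal iterates coincide with the CROP iterates of \Cref{thm:AequivC}.

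Next, with $x_A^{(0)}=x_C^{(0)}$ and no truncation ($m^{(k)}=k$), \Cref{thm:AequivC} gives, for all $k$ until convergence,
\[
x_C^{(k)}=\bar x_A^{(k)},\quad f_C^{(k)}=\bar f_A^{(k)},\quad \widetilde x_C^{(k+1)}=x_A^{(k+1)},\quad \widetilde f_C^{(k+1)}=f_A^{(k+1)}.
\]
I would then substitute the CROP-Anderson relabeling $x^{(k)}_{CA}:=\widetilde x^{(k)}_{C}$, $f^{(k)}_{CA}:=\widetilde f^{(k)}_{C}$, $\bar x^{(k)}_{CA}:=x^{(k)}_{C}$, $\bar f^{(k)}_{CA}:=f^{(k)}_{C}$ introduced just above the corollary. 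For $k\ge 1$ this yields directly $x^{(k)}_{CA}=\widetilde x^{(k)}_{C}=x_A^{(k)}$ and $f^{(k)}_{CA}=\widetilde f^{(k)}_{C}=f_A^{(k)}$, while $\bar x^{(k)}_{CA}=x^{(k)}_{C}=\bar x_A^{(k)}$ and $\bar f^{(k)}_{CA}=f^{(k)}_{C}=\bar f_A^{(k)}$ hold for all $k\ge 0$. The base case $k=0$ for the first pair needs a separate line: $\widetilde x^{(0)}_C$ is not defined, so $x^{(0)}_{CA}$ is taken to be the common initial value $x^{(0)}_C=x^{(0)}_A$ by hypothesis, whence $f^{(0)}_{CA}=f(x^{(0)}_{CA})=f(x^{(0)}_A)=f_A^{(0)}$. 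Combining these cases gives the four claimed identities for all $k=0,1,\ldots$.

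I do not expect any real obstacle here — the statement is genuinely a corollary. The only points requiring a moment's care are (i) justifying that the modified output and the early-exit test in \Cref{alg:CROPAnd} leave the generated sequences unchanged, so that \Cref{thm:AequivC} is applicable verbatim, and (ii) the index shift in the relabeling together with the $k=0$ base case, which must be read off from the initialization rather than from the theorem. Once these are noted, the proof is a one-line substitution.
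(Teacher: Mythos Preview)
Your proposal is correct and follows exactly the paper's approach: the corollary is obtained from \Cref{thm:AequivC} by the relabeling $x^{(k)}_{CA}:=\widetilde x^{(k)}_{C}$, $f^{(k)}_{CA}:=\widetilde f^{(k)}_{C}$, $\bar x^{(k)}_{CA}:=x^{(k)}_{C}$, $\bar f^{(k)}_{CA}:=f^{(k)}_{C}$ stated just before the corollary, and the paper offers no further argument. Your additional bookkeeping---verifying that \Cref{alg:CROPAnd} generates the same internal sequences as \Cref{alg:CROP} and handling the $k=0$ base case---is more careful than what the paper writes out but entirely in the same spirit.
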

In CROP algorithm, updates $\displaystyle x_C^{(k)}$ have the corresponding residuals $\displaystyle f_C^{(k)}$ associated with the least-squares Problem \ref{eq:coefCROP}. Although $\displaystyle f_C^{(k)}$ are used to terminate the iterations, they are still approximated residuals and are affected by the initial guess $\displaystyle x_C^{(0)}$.
Thus, we call them \textit{control residuals}. Relatively, $\displaystyle r_C^{(k)}=f(x_C^{(k)})$ are the \emph{real residuals} corresponding to iterates $\displaystyle x_C^{(k)}$, which obviously are never explicitly computed. Consequently, when solving nonlinear problems, the control residuals may not estimate the real residuals very well, and may cause a lot of problems, e.g. breakdowns. One remedy is to use CROP-Anderson method. Alternatively, the real residuals can be used instead of the control residuals. By changing line 7 of \Cref{alg:CROP} and the corresponding line in CROP-Anderson method (line 11 in \Cref{alg:CROPAnd}) into $\displaystyle f_C^{(k+1)}=f(x_C^{k+1})$, CROP algorithm and CROP-Anderson method become completely different algorithms. In what follows, we will refer to them as rCROP and rCROP-Anderson. Note that in the case of Anderson Acceleration method, the control residuals and the real residuals are the same, i.e., $\displaystyle f_A^{(k)} = f(x_A^{(k)}) = r_A^{(k)}$.

The diagrams above are summarized in \Cref{tab:AvsC} which illustrates two steps of Anderson Acceleration method and CROP algorithm. The $k^{th}$ step of each method is highlighted in \textbf{bold}. Note that the ``optimization'' steps have different results for the two methods, but the ``average $x$'' and the ``iteration $k$'' steps should have the same value. Analogously, the ``average $f$'' and the ``residual $k$'' steps should be the same. If we consider $\displaystyle \widetilde x_C$ and $\displaystyle \widetilde f_C$ from CROP algorithm as iterates and residuals, respectively, then we get steps of CROP-Anderson method  (shaded part of CROP), which is equivalent to Anderson Acceleration method.

If in CROP and CROP-Anderson algorithms we use $\displaystyle f_C^{(k)}=f(x_C^{(k)})$ instead of $\displaystyle f_C^{(k)}= F_C^{(k)}\alpha_C^{(k)}$ as $\displaystyle k^{th}$ residual, then we get real residual CROP (rCROP) and the corresponding rCROP-Anderson (shaded part of CROP method). The rCROP algorithm is presented along Anderson Acceleration method in \Cref{tab:AvsrC}.

\begin{table}[htbp!]
\footnotesize
    \centering
    \hspace{-0.2in}
    \begin{tabular}{|c|c||c|c|}
    \hline
         & \textbf{Anderson Acceleration} & \textbf{CROP Algorithm}& \\ \hline \hline\\[-0.8em]
        iteration $k$ & $x_A^{(k)}=\bar x_A^{(k-1)}+\bar f_A^{(k-1)}$ & $\widetilde x_C^{(k)}=x_C^{(k-1)}+f_C^{(k-1)}$ & new direction \\[0.5em] \hline \\[-0.8em]
        residual $k$ & $f_A^{(k)}=f(x_A^{(k)})$ &  $\widetilde f_C^{(k)}=f(\widetilde x_C^{(k)})$ & new direction \\[0.5em] \hline \\[-0.8em]
        \rowcolor{Gray}
        \textbf{optimization} & $\mathbf{\alpha_A^{(k)}}=\mathbf{\arg\min\|F_A^{(k)}\alpha\|_2}$ & $\alpha_C^{(k)}=\arg\min\|F_C^{(k)}\alpha\|_2$ & optimization\\[0.5em] \hline \\[-0.8em]
        \rowcolor{Gray}
        \textbf{average $x$} & 
        $\mathbf{\bar x_A^{(k)}= X_A^{(k)}\alpha_A^{(k)}}$ & $x_C^{(k)}= X_C^{(k)}\alpha_C^{(k)}$ & iteration $k$  \\[0.5em] \hline \\[-0.8em]
        \rowcolor{Gray}
        \textbf{average $f$} & 
        $\mathbf{\bar f_A^{(k)}= F_A^{(k)}\alpha_A^{(k)}}$ & $f_C^{(k)}=F_C^{(k)}\alpha_C^{(k)}$ & residual $k$\\[0.5em] \hline \\[-0.8em]
        \rowcolor{Gray}
        \textbf{iteration $k+1$} & $\mathbf{x_A^{(k+1)}=\bar x_A^{(k)}+\bar f_A^{(k)}}$ & $\mathbf{\widetilde x_C^{(k+1)}=x_C^{(k)}+f_C^{(k)}}$ & \textbf{new direction}\\[0.5em] \hline \\[-0.8em]
        \rowcolor{Gray}
        \textbf{residual $k+1$} & $\mathbf{f_A^{(k+1)}=f(x_A^{(k+1)})}$ &  $\mathbf{\widetilde f_C^{(k+1)}=f(\widetilde x_C^{(k+1)})}$ & \textbf{new direction} \\[0.5em] \hline \\[-0.8em]
        optimization & \hspace{-0.06in}$\alpha_A^{(k+1)}=\arg\min\|F_A^{(k+1)}\alpha\|_2$\hspace{-0.06in} & \hspace{-0.06in} $\mathbf{\alpha_C^{(k+1)}=\arg\min\|F_C^{(k+1)}\alpha\|_2}$\hspace{-0.06in} & \textbf{optimization}\\[0.5em] \hline \\[-0.8em]
        average $x$ & $\bar x_A^{(k+1)}= X_A^{(k+1)}\alpha_A^{(k+1)}$ & $\mathbf{x_C^{(k+1)}= X_C^{(k+1)}\alpha_C^{(k+1)}}$ & \textbf{iteration $k+1$}  \\[0.5em] \hline \\[-0.8em]
        average $f$ & $\bar f_A^{(k+1)}= F_A^{(k+1)}\alpha_A^{(k+1)}$ & $\mathbf{f_C^{(k+1)}=F_C^{(k+1)}\alpha_C^{(k+1)}}$ & \textbf{residual $k+1$}\\[0.5em] \hline
    \end{tabular}
    \caption{Two iteration steps of Anderson Acceleration and CROP method.}
    \label{tab:AvsC}
\end{table}

\begin{table}[htbp!]
\footnotesize
    \centering
    \hspace{-0.2in}
    \begin{tabular}{|c|c||c|c|}
    \hline
         & \textbf{Anderson Acceleration} & \textbf{\textcolor{red}{rCROP} Algorithm} & \\ \hline \hline\\[-0.8em]
        iteration $k$ & $x_A^{(k)}=\bar x_A^{(k-1)}+\bar f_A^{(k-1)}$ & $\widetilde x_C^{(k)}=x_C^{(k-1)}+f_C^{(k-1)}$ & new direction\\[0.5em] \hline \\[-0.8em]
        residual $k$ & $f_A^{(k)}=f(x_A^{(k)})$ &  $\widetilde f_C^{(k)}=f(\widetilde x_C^{(k)})$ & new direction \\[0.5em] \hline \\[-0.8em]
        \rowcolor{Gray}
        \textbf{optimization} & $\mathbf{\alpha_A^{(k)}=\arg\min\|F_A^{(k)}\alpha\|_2}$ & $\alpha_C^{(k)}=\arg\min\|F_C^{(k)}\alpha\|_2$ & optimization\\[0.5em] \hline \\[-0.8em]
        \rowcolor{Gray}
        \textbf{average $x$} & $\mathbf{\bar x_A^{(k)}= X_A^{(k)}\alpha_A^{(k)}}$ & $x_C^{(k)}= X_C^{(k)}\alpha_C^{(k)}$ & iteration $k$  \\[0.5em] \hline \\[-0.8em]
        \rowcolor{Gray}
        \textbf{average $f$} & $\mathbf{\bar f_A^{(k)}= F_A^{(k)}\alpha_A^{(k)}}$ & \textcolor{red}{$\mathbf{f_C^{(k)}=f(x_C^{(k)})}$} & residual $k$\\\hline
        \rowcolor{Gray}
      \textbf{iteration $k+1$} & $\mathbf{x_A^{(k+1)}=\bar x_A^{(k)}+\bar f_A^{(k)}}$ & $\mathbf{\widetilde x_C^{(k+1)}=x_C^{(k)}+f_C^{(k)}}$ & \textbf{new direction}\\[0.5em] \hline \\[-0.8em]
        \rowcolor{Gray}
        \textbf{residual $k+1$} & $\mathbf{f_A^{(k+1)}=f(x_A^{(k+1)})}$ &  $\mathbf{\widetilde f_C^{(k+1)}=f(\widetilde x_C^{(k+1)})}$ & \textbf{new direction} \\[0.5em] \hline \\[-0.8em]
        optimization & \hspace{-0.06in}$\alpha_A^{(k+1)}=\arg\min\|F_A^{(k+1)}\alpha\|_2$\hspace{-0.06in} & \hspace{-0.06in}$\mathbf{\alpha_C^{(k+1)}=\arg\min\|F_C^{(k+1)}\alpha\|_2}$\hspace{-0.06in} & \textbf{optimization}\\[0.5em] \hline \\[-0.8em]
        average $x$ & $\bar x_A^{(k+1)}= X_A^{(k+1)}\alpha_A^{(k+1)}$ & $\mathbf{x_C^{(k+1)}= X_C^{(k+1)}\alpha_C^{(k+1)}}$ & \textbf{iteration $k+1$}  \\[0.5em] \hline \\[-0.8em]
        average $f$ & $\bar f_A^{(k+1)}= F_A^{(k+1)}\alpha_A^{(k+1)}$ & \textcolor{red}{$\mathbf{f_C^{(k+1)}=f(x_C^{(k+1)})}$} & \textbf{residual $k+1$}\\[0.5em] \hline 
    \end{tabular}
    \caption{Two iteration steps of Anderson Acceleration and \textcolor{red}{\bf rCROP} method.}
    \label{tab:AvsrC}
\end{table}


\section{Broader View of Anderson Acceleration Method and CROP Algorithm}

This section discusses the connections between CROP algorithm and some other state-of-the-art iterative methods. Following our findings in \Cref{sec:AndersonVsCROP}, we explore links between CROP and Krylov subspace methods in \Cref{sec:ConKrylov}, and multisecant methods in \Cref{sec:ConMulti}.

In \cite{Washio1997, Oos2000}, a Krylov acceleration method equivalent to flexible GMRES was introduced, and its Jacobian-free version utilizes the least-squares similar to \Cref{eq:constainedLSA}. \cite{RohS11} pointed out the connection between Anderson Acceleration and the GMRES method, and details on the equivalence between Anderson Acceleration without truncation and the GMRES method were provided~\cite{Ni09,WalN11}.  \cite{Hans2024} showed that truncated Anderson Acceleration is a multi-Krylov method.

\subsection{Connection with Krylov methods}
\label{sec:ConKrylov}

Let us consider applying CROP algorithm to the simple linear problem: Find $x \in \R^n$ such that $\displaystyle Ax=b$, with a nonsingular $\displaystyle A \in \R^{n \times n}$ and $\displaystyle b \in \R^n$. Then, the associated residual (error) vectors can be chosen as $\displaystyle f(x) = b - Ax$, with the corresponding $\displaystyle g(x) = b + (I-A)x$.

First, we will present some facts about CROP algorithm's application to the linear problem $\displaystyle  Ax = b$.

\begin{lemma}\label{lem:linres}
    Consider using CROP algorithm to solve the linear problem $Ax=b$. Then, the real and the control residuals are equal, i.e., $\displaystyle r_C^{(k)} = f_C^{(k)}$ for any $\displaystyle  k\in\N$.
\end{lemma}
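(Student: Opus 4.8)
The plan is to argue by induction on $k$, using the single structural fact that for the linear system $Ax = b$ the residual map $f(x) = b - Ax$ is affine, and hence commutes with barycentric combinations: if $\sum_i \lambda_i = 1$, then $f\big(\sum_i \lambda_i y_i\big) = \sum_i \lambda_i f(y_i)$. In \Cref{alg:CROP} the iterate $x_C^{(k+1)}$ (step 6) and the control residual $f_C^{(k+1)}$ (step 7) are formed with the \emph{same} coefficient vector $\alpha_C^{(k+1)}$, whose entries sum to one by the constraint in \eqref{eq:coefCROP}. Thus, if every residual appearing on the right-hand side of step 7 is already a genuine residual of the corresponding vector, then $f$ can be pulled through the combination and $f_C^{(k+1)}$ is revealed to equal $f(x_C^{(k+1)})$.

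For the base case, $f_C^{(0)} = f(x_C^{(0)}) = r_C^{(0)}$ by the initialization in step 1 of \Cref{alg:CROP}. Note also that $\widetilde x_C^{(k+1)} := x_C^{(k)} + f_C^{(k)}$ and $\widetilde f_C^{(k+1)} := f(\widetilde x_C^{(k+1)})$ hold by definition for every $k$, so the auxiliary residual $\widetilde f_C^{(k+1)}$ is automatically a real residual; only the control residuals $f_C^{(k)}$ require verification.

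For the inductive step, suppose $f_C^{(i)} = f(x_C^{(i)})$ for all $i \le k$. In step 7, $f_C^{(k+1)}$ is a combination, with weights summing to one, of the residuals $f_C^{(k+1-m_C^{(k+1)}+i)}$, $i = 0,\ldots,m_C^{(k+1)}-1$ — each a real residual by the inductive hypothesis — together with $\widetilde f_C^{(k+1)} = f(\widetilde x_C^{(k+1)})$. Applying affine-commutation and recognizing the resulting argument as precisely the combination defining $x_C^{(k+1)}$ in step 6 gives $f_C^{(k+1)} = f(x_C^{(k+1)}) = r_C^{(k+1)}$, which closes the induction.

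I do not expect any genuine obstacle: the argument is a short induction, and the only points requiring care are the index bookkeeping in the truncated sums (the claim is stated for arbitrary $m_C^{(k)}$, and since only finitely many already-computed quantities enter each update, truncation changes nothing) and the observation that the constraint in \eqref{eq:coefCROP} is exactly the partition-of-unity condition that makes $f$ commute through the update. If a breakdown $f_C^{(k)} = 0$ occurs, then $x_C^{(k)}$ solves the system and $r_C^{(k)} = 0 = f_C^{(k)}$, consistent with the statement; before breakdown the induction proceeds unchanged.
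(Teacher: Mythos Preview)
Your proof is correct and follows essentially the same approach as the paper: an induction on $k$ exploiting that the affine map $f(x)=b-Ax$ commutes with barycentric combinations, so that the control residual update in step~7 collapses to $b - Ax_C^{(k+1)}$. The paper simply writes out the affine-commutation step explicitly rather than stating it as a principle, but the argument is the same.
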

\begin{proof}
    The proof follows by induction. For the initial residuals we have $\displaystyle r_C^{(0)}=f_C^{(0)}$. Assume that $\displaystyle r_C^{(\ell)}=f_C^{(\ell)}$ for all $\displaystyle \ell \le k$. Then, for $k+1$
\begin{equation}
\nonumber
\begin{split}
\displaystyle
f_C^{(k+1)}&=\sum_{i=0}^{m_C^{(k+1)}-1}\alpha_{C,i}^{(k+1)} f_C^{(k+1-m_C^{(k+1)}+i)}+\alpha_{C,m_C^{(k+1)}}^{(k+1)} \widetilde f_C^{(k+1)}\\
&=\sum_{i=0}^{m_C^{(k+1)}-1}\alpha_{C,i}^{(k+1)} r_C^{(k+1-m_C^{(k+1)}+i)}+\alpha_{C,m_C^{(k+1)}}^{(k+1)} \widetilde f_C^{(k+1)}\\
&=\sum_{i=0}^{m_C^{(k+1)}-1}\alpha_{C,i}^{(k+1)}\big(b-Ax_C^{(k+1-m_C^{(k+1)}+i)}\big)+\alpha_{C,m_C^{(k+1)}}^{(k+1)} \big(b-A\widetilde x_C^{(k+1)}\big)\\
    &=b-A\bigg(\sum_{i=0}^{m_C^{(k+1)}-1}\alpha_{C,i}^{(k+1)} x_C^{(k+1-m_C^{(k+1)}+i)}+\alpha_{C,m_C^{(k+1)}}^{(k+1)} \widetilde x_C^{(k+1)}\bigg)\\
    &=b-Ax_C^{(k+1)} = r_C^{(k+1)}.
\end{split}
\end{equation}
Hence, by induction $\displaystyle r_C^{(k)}=f_C^{(k)}$ \ for all $k\in\N$.
\end{proof}

\begin{lemma}\label{lem:linrestilde}
    If CROP algorithm is used to solve the linear problem $\displaystyle Ax=b$, then $\displaystyle \widetilde f_C^{(k+1)} = (I-A) f_C^{(k)}$ for any $k\in\N$.
\end{lemma}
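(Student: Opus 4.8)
The plan is to compute $\widetilde f_C^{(k+1)}$ directly from its definition and then invoke \Cref{lem:linres}. Recall that in \Cref{alg:CROP} the provisional iterate is $\widetilde x_C^{(k+1)} = x_C^{(k)} + f_C^{(k)}$ and its associated residual is the \emph{real} residual of that point, i.e., $\widetilde f_C^{(k+1)} = f(\widetilde x_C^{(k+1)})$. For the linear problem we have $f(x) = b - Ax$, so substituting gives
\[
\widetilde f_C^{(k+1)} = b - A\widetilde x_C^{(k+1)} = b - A\bigl(x_C^{(k)} + f_C^{(k)}\bigr) = \bigl(b - Ax_C^{(k)}\bigr) - A f_C^{(k)}.
\]

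Next I would identify $b - Ax_C^{(k)}$ as the real residual $r_C^{(k)}$ at the iterate $x_C^{(k)}$. By \Cref{lem:linres}, for the linear problem the real and control residuals coincide, so $b - Ax_C^{(k)} = r_C^{(k)} = f_C^{(k)}$. Plugging this in yields
\[
\widetilde f_C^{(k+1)} = f_C^{(k)} - A f_C^{(k)} = (I - A) f_C^{(k)},
\]
which is the claimed identity, and since $k \in \N$ was arbitrary this completes the argument.

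There is essentially no obstacle here: the statement is a one-line consequence of \Cref{lem:linres} together with the linearity of $f$ and the defining relation $\widetilde x_C^{(k+1)} = x_C^{(k)} + f_C^{(k)}$. The only point requiring a little care is to remember that $\widetilde f_C^{(k+1)}$ is defined as the \emph{actual} evaluation $f(\widetilde x_C^{(k+1)})$ (not a linear combination of previous residuals), so that the substitution $f(x) = b - Ax$ is legitimate; \Cref{lem:linres} is what makes $b - Ax_C^{(k)}$ collapse back to $f_C^{(k)}$.
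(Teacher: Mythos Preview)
Your proof is correct and follows essentially the same route as the paper: expand $\widetilde f_C^{(k+1)} = b - A\widetilde x_C^{(k+1)}$, substitute $\widetilde x_C^{(k+1)} = x_C^{(k)} + f_C^{(k)}$, and use \Cref{lem:linres} to replace $b - Ax_C^{(k)}$ by $f_C^{(k)}$. The only cosmetic difference is that the paper factors out $(I-A)$ from $b - A\bigl(b + (I-A)x_C^{(k)}\bigr)$ directly, whereas you distribute first and then recombine; both arguments rest on exactly the same ingredients.
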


\begin{proof}
\begin{equation}
\nonumber
\begin{split}
    \widetilde f_C^{(k+1)} &= b-A\widetilde x_C^{(k+1)} = b-A\Big(b+(I-A) x_C^{(k)}\Big)=(I-A)(b-A x_C^{(k)})\\
    & =(I-A) r_C^{(k)}=(I-A) f_C^{(k)}.
\end{split}
\end{equation}
\end{proof}

The equivalence of Anderson Acceleration method and the GMRES method is presented in \cite{Ni09,WalN11}. A similar result for CROP algorithm can be proved.

\begin{theorem}\label{thm:CeqivG}
    Consider using CROP algorithm and the GMRES method to solve the linear problem $\displaystyle Ax=b$ under the following assumptions:
    \begin{enumerate}
        \item $A$ is nonsingular.
        \item Run CROP algorithm with $\displaystyle f(x)=b-Ax$, $\displaystyle g(x)=f(x)+x=b+(I-A)x$ and no truncation.
        \item The initial values of the GMRES and CROP coincide, i.e., $\displaystyle x_G^{(0)}=x_C^{(0)}$.
    \end{enumerate}
    Then, for $\displaystyle  k>0$, $\displaystyle  x_C^{(k)}=x_G^{(k)}$, $\displaystyle  f_C^{(k)}=r_G^{(k)}$, where $r_G^{(k)}$ denote the GMRES residual defined as $\displaystyle r_G^{(k)} := b-Ax_G^{(k)}$.
\end{theorem}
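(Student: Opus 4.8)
The plan is to argue by induction on $k$ that the CROP iterates coincide with the GMRES iterates. Write $r_0 := b - A x_C^{(0)} = f_C^{(0)}$ and let $\mathcal{K}_j := \mathrm{span}\{r_0, A r_0, \dots, A^{j-1} r_0\}$ denote the $j$-th Krylov subspace. Recall that the GMRES iterate $x_G^{(k)}$ is characterized uniquely (as long as $\dim \mathcal{K}_k = k$) by $x_G^{(k)} \in x_C^{(0)} + \mathcal{K}_k$ together with minimality of $\|b - A x_G^{(k)}\|_2$ over that affine subspace. Hence it suffices to show that $x_C^{(k)} \in x_C^{(0)} + \mathcal{K}_k$, that the affine hull over which the CROP least-squares step \eqref{eq:coefCROP} minimizes is exactly $x_C^{(0)} + \mathcal{K}_k$, and (so that the induction propagates) that the differences $x_C^{(i)} - x_C^{(0)}$, $1 \le i \le k$, span $\mathcal{K}_k$.

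First I would record the linear-case consequences of the preceding lemmas. By \Cref{lem:linres} the control and real residuals coincide, so $f_C^{(i)} = b - A x_C^{(i)}$ for all $i$, and directly $\widetilde f_C^{(k+1)} = f(\widetilde x_C^{(k+1)}) = b - A \widetilde x_C^{(k+1)}$; \Cref{lem:linrestilde} additionally gives $\widetilde f_C^{(k+1)} = (I-A) f_C^{(k)}$. Because the CROP update $x_C^{(k+1)} = \sum_{i=0}^{k} \alpha_{C,i}^{(k+1)} x_C^{(i)} + \alpha_{C,k+1}^{(k+1)} \widetilde x_C^{(k+1)}$ uses coefficients summing to $1$, linearity of $x \mapsto b - Ax$ yields $f_C^{(k+1)} = b - A x_C^{(k+1)}$ and shows that \eqref{eq:coefCROP} is precisely $\min\{\|b - Ax\|_2 : x \in \mathcal{A}_{k+1}\}$ with $\mathcal{A}_{k+1} := \mathrm{Aff}\{x_C^{(0)}, \dots, x_C^{(k)}, \widetilde x_C^{(k+1)}\}$.

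For the inductive step (producing $x_C^{(k+1)}$ from the hypotheses at level $k$) I would establish two facts. \emph{Containment}: from $\widetilde x_C^{(k+1)} = x_C^{(k)} + f_C^{(k)} = x_C^{(0)} + (I-A)(x_C^{(k)} - x_C^{(0)}) + r_0$ together with $x_C^{(k)} - x_C^{(0)} \in \mathcal{K}_k$, one gets $\widetilde x_C^{(k+1)} - x_C^{(0)} \in \mathcal{K}_{k+1}$, whence $\mathcal{A}_{k+1} \subseteq x_C^{(0)} + \mathcal{K}_{k+1}$ and in particular $x_C^{(k+1)} - x_C^{(0)} \in \mathcal{K}_{k+1}$. \emph{Full dimension}: writing $x_C^{(k)} - x_C^{(0)} = \sum_{j=0}^{k-1} c_j A^j r_0$ in the assumed basis of $\mathcal{K}_k$, the same identity shows that the $A^k r_0$-coefficient of $\widetilde x_C^{(k+1)} - x_C^{(0)}$ equals $-c_{k-1}$; when $c_{k-1} \neq 0$ this forces $\dim \mathcal{A}_{k+1} = k+1$, hence $\mathcal{A}_{k+1} = x_C^{(0)} + \mathcal{K}_{k+1}$. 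The minimizer of $\|b - Ax\|_2$ over $x_C^{(0)} + \mathcal{K}_{k+1}$ is then unique and equals $x_G^{(k+1)}$, so $x_C^{(k+1)} = x_G^{(k+1)}$ and $f_C^{(k+1)} = r_G^{(k+1)}$; finally $x_G^{(k+1)} - x_C^{(0)} \notin \mathcal{K}_k$ (otherwise GMRES would stagnate), which reinstates the span hypothesis at level $k+1$. The base case $k=1$ is immediate from $\widetilde x_C^{(1)} = x_C^{(0)} + r_0$ and $x_C^{(1)} = \alpha_{C,0}^{(1)} x_C^{(0)} + \alpha_{C,1}^{(1)} \widetilde x_C^{(1)} \in x_C^{(0)} + \mathcal{K}_1$.

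I expect the full-dimension step to be the only genuine obstacle: it rests on $c_{k-1} \neq 0$, equivalently on GMRES not stagnating at step $k$ (equivalently $x_G^{(k)} \neq x_G^{(k-1)}$), so the statement should be read under the standing non-stagnation assumption that underlies the analogous Anderson--GMRES results in \cite{Ni09,WalN11}; once $f_C^{(k)} = 0$ the CROP recursion terminates, consistent with GMRES reaching the exact solution at the grade of $r_0$ with respect to $A$. Alternatively, the argument compresses: \Cref{thm:AequivC} gives $x_C^{(k)} = \bar x_A^{(k)}$ and $f_C^{(k)} = \bar f_A^{(k)}$ for Anderson Acceleration with $\beta_k = 1$, and the equivalence of the Anderson weighted averages with GMRES for linear systems (\cite{Ni09,WalN11}) then transfers the claim directly; the only additional verification needed is that the CROP-type averaging generates the same affine constraint set, which is exactly the pair of identities displayed in the previous paragraph.
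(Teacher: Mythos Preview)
Your proposal is correct and follows essentially the same inductive Krylov-subspace argument as the paper: the paper phrases the induction in terms of $\mathrm{span}\{f_C^{(0)},\ldots,f_C^{(k)}\}=\mathcal{K}_{k+1}$ (using \Cref{lem:linres,lem:linrestilde}) and invokes the same non-stagnation caveat for the full-dimension step, while you equivalently track the affine hull of the iterates $\mathcal{A}_{k+1}=x_C^{(0)}+\mathcal{K}_{k+1}$. The alternative route via \Cref{thm:AequivC} and the Anderson--GMRES equivalence of \cite{Ni09,WalN11} that you sketch at the end is exactly the remark the paper records immediately after its proof.
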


Before proving \Cref{thm:CeqivG}, we first need the following lemma.
\begin{lemma}\label{lem:CROPKrylov}
    Consider using CROP algorithm and the GMRES method to solve the linear problem $\displaystyle Ax=b$ under the assumptions from \Cref{thm:CeqivG}, and let the Krylov subspace associated with matrix $\displaystyle A$ and vector $\displaystyle r_G^{(0)}$ be given as
    $$\displaystyle \mathcal{K}_n=\mathcal{K}_n(A,r_G^{(0)})=span\{r_G^{(0)},Ar_G^{(0)},\ldots, A^{n-1}r_G^{(0)}\}.$$
    Then, $\displaystyle \mathcal{K}_n=span\{f_C^{(0)},\ldots, f_C^{(n-1)}\}$ for all $\displaystyle n\in\mathbb{N^+}$
\end{lemma}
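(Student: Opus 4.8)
The plan is to prove the two set inclusions separately by induction on $n$, using three identities already at hand: $f_C^{(0)} = b - A x_C^{(0)} = b - A x_G^{(0)} = r_G^{(0)}$ (assumption~3 of \Cref{thm:CeqivG}), \Cref{lem:linrestilde} rewritten as $\widetilde f_C^{(k+1)} = (I-A) f_C^{(k)} = f_C^{(k)} - A f_C^{(k)}$, and the CROP residual update in the no-truncation case,
\[
f_C^{(k+1)} = \sum_{i=0}^{k}\alpha_{C,i}^{(k+1)} f_C^{(i)} + \alpha_{C,k+1}^{(k+1)} \widetilde f_C^{(k+1)}, \qquad \sum_{i=0}^{k+1}\alpha_{C,i}^{(k+1)} = 1 .
\]

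For the inclusion $\operatorname{span}\{f_C^{(0)},\ldots,f_C^{(n-1)}\} \subseteq \mathcal{K}_n$ I would show by induction that $f_C^{(j)} \in \mathcal{K}_{j+1}$ for every $j$. The base case is $f_C^{(0)} = r_G^{(0)} \in \mathcal{K}_1$. For the step, assuming $f_C^{(i)} \in \mathcal{K}_{i+1}$ for all $i \le k$, we get $\widetilde f_C^{(k+1)} = f_C^{(k)} - A f_C^{(k)} \in \mathcal{K}_{k+1} + A\mathcal{K}_{k+1} = \mathcal{K}_{k+2}$; since also each $f_C^{(i)}$, $i \le k$, lies in $\mathcal{K}_{k+1} \subseteq \mathcal{K}_{k+2}$, the update formula puts $f_C^{(k+1)} \in \mathcal{K}_{k+2}$. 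Taking spans over $j = 0,\ldots,n-1$ and using that the $\mathcal{K}_j$ are nested yields the claim. This direction is routine and needs no genericity assumption.

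The reverse inclusion $\mathcal{K}_n \subseteq \operatorname{span}\{f_C^{(0)},\ldots,f_C^{(n-1)}\}$ is the heart of the matter. I would prove, by induction on $j$, that $A^j r_G^{(0)} \in \operatorname{span}\{f_C^{(0)},\ldots,f_C^{(j)}\}$; the case $j = 0$ is $r_G^{(0)} = f_C^{(0)}$. For the step, $A^{j-1} r_G^{(0)}$ already lies in $\operatorname{span}\{f_C^{(0)},\ldots,f_C^{(j-1)}\}$ by hypothesis, so it suffices to show $A f_C^{(i)} \in \operatorname{span}\{f_C^{(0)},\ldots,f_C^{(j)}\}$ for $i \le j-1$. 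Rearranging the update at step $i+1$ and substituting $\widetilde f_C^{(i+1)} = f_C^{(i)} - A f_C^{(i)}$ gives
\[
\alpha_{C,i+1}^{(i+1)}\, A f_C^{(i)} = \sum_{\ell=0}^{i}\alpha_{C,\ell}^{(i+1)} f_C^{(\ell)} + \alpha_{C,i+1}^{(i+1)} f_C^{(i)} - f_C^{(i+1)} ,
\]
so, provided $\alpha_{C,i+1}^{(i+1)} \ne 0$, we may divide through and obtain $A f_C^{(i)} \in \operatorname{span}\{f_C^{(0)},\ldots,f_C^{(i+1)}\} \subseteq \operatorname{span}\{f_C^{(0)},\ldots,f_C^{(j)}\}$ since $i+1 \le j$. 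Combining the two inclusions completes the proof.

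The main obstacle is precisely the nonvanishing of the last mixing coefficient $\alpha_{C,i+1}^{(i+1)}$, i.e.\ the fact that the newest trial vector $\widetilde f_C^{(i+1)}$ genuinely contributes to the optimal CROP combination. Vanishing of $\alpha_{C,i+1}^{(i+1)}$ would mean that the constrained least-squares minimum over $\mathrm{Aff}\{f_C^{(0)},\ldots,f_C^{(i)},\widetilde f_C^{(i+1)}\}$ is already attained on $\mathrm{Aff}\{f_C^{(0)},\ldots,f_C^{(i)}\}$, a degenerate situation which, by \Cref{lem:linres} (the control residuals equal the true residuals $b - A x_C^{(i)}$) together with the optimality characterizing the CROP iterates, does not occur before the algorithm breaks down or converges; this is exactly the regime in which \Cref{thm:CeqivG} is used, so the induction carries through for all relevant $n$. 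If one prefers to avoid tracking this coefficient, an alternative is a dimension count: the forward inclusion already gives $\operatorname{span}\{f_C^{(0)},\ldots,f_C^{(n-1)}\} \subseteq \mathcal{K}_n$, so it suffices to show that $f_C^{(0)},\ldots,f_C^{(n-1)}$ stay linearly independent as long as $\dim \mathcal{K}_n = n$, which again follows from \Cref{lem:linres} and the optimality of the CROP least-squares solution.
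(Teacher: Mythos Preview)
Your forward inclusion argument is essentially identical to the paper's: induction via $\widetilde f_C^{(k+1)} = (I-A)f_C^{(k)} \in \mathcal{K}_{k+2}$ and the CROP update formula, exactly as the paper does. Where you diverge is that you actually prove the reverse inclusion, whereas the paper's proof of this lemma stops after showing $f_C^{(k+1)}\in\mathcal{K}_{k+2}$ and never establishes $\mathcal{K}_{k+2}\subseteq\operatorname{span}\{f_C^{(0)},\ldots,f_C^{(k+1)}\}$; as written, the paper's inductive step only yields one containment. The non-degeneracy issue you isolate---the nonvanishing of the last mixing coefficient $\alpha_{C,k+1}^{(k+1)}$---is precisely what the paper defers to the proof of \Cref{thm:CeqivG}, where it is asserted (without a detailed argument) that $\alpha_{C,m_C^{(k+1)}}^{(k+1)}\neq 0$ and $\widetilde f_C^{(k+1)}\notin\mathcal{K}_{k+1}$ ``unless the algorithm stagnates.'' So your proposal is correct and in fact more complete than the paper's own proof of the lemma; your dimension-count alternative is the cleanest way to close the equality and is closer in spirit to how the paper ultimately uses the result.
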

\begin{proof}
    The proof proceeds by induction. 
    For $\displaystyle n=1$, the Krylov subspace $\displaystyle \mathcal{K}_1=span\{r_G^{(0)}\}=span\{f_C^{(0)}\}$.\\
    Assume that for $\displaystyle n=k+1$, the Krylov subspace $\displaystyle \mathcal{K}_{k+1}=span\{f_C^{(0)},\ldots, f_C^{(k)}\}$. Then, for $\displaystyle n=k+2$,
    $$\displaystyle \widetilde f_C^{(k+1)}=(I-A) f_C^{(k)}=f_C^{(k)}-A f_C^{(k)}\in \mathcal{K}_{k+2}$$
    Since $\displaystyle f_C^{(k+1)}$ is a linear combination of vectors $\displaystyle f_C^{(0)},\ldots, f_C^{(k)},\widetilde f_C^{(k+1)}$, thus $\displaystyle f_C^{(k+1)}\in \mathcal{K}_{k+2}$. 
\end{proof}
Finally, we are ready to prove~\Cref{thm:CeqivG}.
\begin{proof}[Proof of \Cref{thm:CeqivG}]

    We show by induction that at step $k$ of CROP algorithm vectors $\displaystyle f_C^{(0)},\ldots, f_C^{(k)}$ form 
     Krylov subspace $\displaystyle \mathcal{K}_{k+1}$, i.e., $\displaystyle \mathcal{K}_{k+1} :=span\big\{f_C^{(0)},\ldots, f_C^{(k)}\big\}$.
    \noindent
    For $k=0$, Krylov subspace $\mathcal{K}_1=span\big\{r_G^{(0)}\big\}=span\big\{f_C^{(0)}\big\}$, which proves the basic case.\\
    Assume that at step $k$ we have Krylov subspace $\displaystyle \mathcal{K}_{k+1}=span\big\{f_C^{(0)},\ldots, f_C^{(k)}\big\}$ as an induction hypothesis. Then at step $k+1$
    $$\displaystyle \widetilde f_C^{(k+1)}=(I-A) f_C^{(k)}=f_C^{(k)}-Af_C^{(k)}\in \mathcal{K}_{k+2}.$$
    Since $f_C^{(k+1)}$ is a linear combination of vectors $f_C^{(0)},\ldots, f_C^{(k)},\widetilde f_C^{(k+1)}$,  \
    $ \displaystyle f_C^{(k+1)} \in \mathcal{K}_{k+2}$. Moreover, we need to show that $\displaystyle f_C^{(k+1)}\notin \mathcal{K}_{k+1}$ which requires $\displaystyle \alpha_{C,m_C^{(k+1)}}^{(k+1)}\neq0$ and $\displaystyle \widetilde f_C^{(k+1)}\notin \mathcal{K}_{k+1}$. These, however, are satisfied unless the algorithm stagnates. The equivalence of CROP algorithm and GMRES method follows directly from the fact that $\displaystyle f_C^{(k)}=\min\limits_{v \in \mathcal{K}_{k+1}} v =r_G^{(k)}$.
\end{proof}

\begin{remark}
    Note that \Cref{thm:CeqivG} can also be proved using \Cref{thm:AequivC} and the equivalence between Anderson Acceleration and GMRES method~\cite{WalN11}.
\end{remark}

Next, we will show that, similarly to full CROP algorithm, there exists a truncated linear method equivalent to the truncated CROP(m) algorithm. In \Cref{thm:CeqivG}, we have shown the equivalence between CROP and GMRES method. Since GMRES is equivalent to Generalized Conjugate Residual (GCR) algorithm, let us consider the truncated GCR, namely, the ORTHOMIN method~\cite[Section 6.9]{Saad2003}). For general CROP($m$) method, the following theorem holds.
\begin{theorem}
\label{thm:CeqivO}
Consider solving the linear system $Ax = b$ by CROP($m$) algorithm and ORTHOMIN($m-1$) method, under the following assumptions:
    \begin{enumerate}
        \item $A$ is nonsingular.
        \item Run CROP($m$) algorithm with $\displaystyle f(x)=b-Ax$ and  $\displaystyle g(x)=f(x)+x=b+(I-A)x$.
        \item CROP algorithm is truncated with parameter $m$, and ORTHOMIN with parameter $m-1$.
        \item The initial values of CROP($m$) and ORTHOMIN($m-1$) coincide, i.e.,\\ $\displaystyle x_{Omin(m-1)}^{(0)} = x_{CROP(m)}^{(0)}$.\\[0.01in]
    \end{enumerate}
    Then for $\displaystyle k>0$, \ $\displaystyle x_{CROP(m)}^{(k)}=x_{Omin(m-1)}^{(k)}$ \ and \ $\displaystyle f_{CROP(m)}^{(k)}=r_{Omin(m-1)}^{(k)}$, with ORTHOMIN residuals \ $\displaystyle r_{Omin(m-1)}^{(k)}=b-Ax^{(k)}$.
\end{theorem}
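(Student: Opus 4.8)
The plan is to follow the blueprint of the proof of \Cref{thm:CeqivG}, but with careful bookkeeping of the truncation windows: I will show by induction on $k$ that the two methods generate identical iterates and residuals, the reason being that at every step they both compute the (unique) minimizer of $\|b-Ax\|_2$ over the \emph{same} affine subspace. The nonsingularity of $A$ is what makes this minimizer unique, so matching the search spaces suffices.

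On the CROP side I would first feed in the linear-case simplifications. By \Cref{lem:linres} the control residual is the true residual, $f_C^{(k)} = b - Ax_C^{(k)}$, and $\widetilde x_C^{(k+1)} - x_C^{(k)} = f_C^{(k)}$ while $\widetilde f_C^{(k+1)} = (I-A)f_C^{(k)}$ by \Cref{lem:linrestilde}. Plugging these into the affine-frame form \eqref{eq:cuf} shows that step $k{+}1$ of CROP($m$) returns the $x_C^{(k+1)}$ minimizing $\|b-Ax\|_2$ over $x_C^{(k)} + \mathcal{S}_{k+1}^{C}$, where
\[
\mathcal{S}_{k+1}^{C} := \operatorname{span}\big\{\Delta x_C^{(k+1-m)},\ldots,\Delta x_C^{(k-1)},\, f_C^{(k)}\big\}
\]
(with the window shortened in the obvious way when $k{+}1\le m$), a subspace of dimension at most $m$ spanned by the $m-1$ most recent search directions together with the current residual.

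On the Krylov side I would write ORTHOMIN($m-1$) in its GCR recurrence: $p^{(0)}=r^{(0)}$, and for $j\ge 1$, $p^{(j)} = r^{(j)} + \sum_{i=j-m+1}^{j-1}\beta_i^{(j)} p^{(i)}$ with the $\beta_i^{(j)}$ chosen so that $Ap^{(j)}\perp Ap^{(i)}$ for $i=j-m+1,\ldots,j-1$, together with $x^{(j+1)}=x^{(j)}+\alpha_j p^{(j)}$, $r^{(j+1)}=r^{(j)}-\alpha_j Ap^{(j)}$ and $\alpha_j$ the one-dimensional residual minimizer. Since $\Delta x^{(i)} = \alpha_i p^{(i)}$ and $r^{(k)}=p^{(k)}-\sum_i\beta_i^{(k)}p^{(i)}$, one gets $\operatorname{span}\{p^{(k+1-m)},\ldots,p^{(k)}\} = \operatorname{span}\{\Delta x^{(k+1-m)},\ldots,\Delta x^{(k-1)},r^{(k)}\}=:\mathcal{S}_{k+1}^{O}$ (unless the method stagnates). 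The key auxiliary fact, proved by an inner induction, is the standard local-orthogonality of truncated GCR: $r^{(k+1)}\perp Ap^{(j)}$ for every $j$ in the window $\{k{+}1{-}m,\ldots,k\}$; indeed $r^{(k+1)}\perp Ap^{(k)}$ by the choice of $\alpha_k$, and for $j$ in $\{k{+}1{-}m,\ldots,k{-}1\}$ one has $\langle r^{(k+1)},Ap^{(j)}\rangle = \langle r^{(k)},Ap^{(j)}\rangle - \alpha_k\langle Ap^{(k)},Ap^{(j)}\rangle$, where the second term vanishes because $p^{(k)}$ was $A^{T}\!A$-orthogonalized against exactly those $p^{(j)}$ and the first vanishes by the inner inductive hypothesis. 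Hence $r^{(k+1)}\perp A\,\mathcal{S}_{k+1}^{O}$, so ORTHOMIN($m-1$) also returns the unique minimizer of $\|b-Ax\|_2$ over $x^{(k)}+\mathcal{S}_{k+1}^{O}$. With both methods so described, the outer induction closes: assuming $x_C^{(k)}=x^{(k)}$ and $f_C^{(k)}=r^{(k)}$, the parallelism $\Delta x_C^{(j)}\parallel p^{(j)}$ gives $\mathcal{S}_{k+1}^{C}=\mathcal{S}_{k+1}^{O}$, both minimize the same functional over the same affine set, uniqueness forces $x_C^{(k+1)}=x^{(k+1)}$, and then $f_C^{(k+1)}=b-Ax_C^{(k+1)}=r^{(k+1)}$ by \Cref{lem:linres}; the base case is the common first full step from $x^{(0)}$.

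I expect the main obstacle to be precisely the truncation bookkeeping that forces the pairing CROP($m$)$\leftrightarrow$ORTHOMIN($m-1$): CROP($m$) carries an $m$-dimensional affine search space formed from $m-1$ stored directions plus the current residual, and one must check that orthogonalizing the new ORTHOMIN direction against exactly $m-1$ predecessors makes the local-orthogonality window just wide enough that a single step along $p^{(k)}$ still attains the \emph{full} minimum over $\mathcal{S}_{k+1}^{O}$ — in contrast to the well-known loss of global optimality over the whole Krylov space. Verifying this, and handling the short initial windows $k{+}1\le m$ uniformly (where neither method truncates and both coincide with GCR/GMRES as in \Cref{thm:CeqivG}), together with the usual stagnation caveat $\alpha_{C,m_C^{(k+1)}}^{(k+1)}\neq 0$, is routine but is where the care is needed.
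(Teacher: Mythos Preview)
Your argument is correct and takes a genuinely different route from the paper. The paper proceeds by a \emph{double} induction: an outer induction on the truncation parameter $m$ (establishing the base case CROP($1$) $=$ ORTHOMIN($0$) by direct calculation, then reducing the short-window steps $k{+}1<m$ of CROP($m$) to CROP($k{+}1$) and invoking the hypothesis on smaller $m$), and inside that an induction on $k$ in which it explicitly rewrites the CROP least-squares residual \eqref{eq:CROPmf}, performs the Gram--Schmidt step \eqref{eq:CROPortho} to build $Ap^{(k)}$, and reads off that the resulting update \eqref{eq:CROPmxf} is literally the ORTHOMIN($m-1$) recurrence. Your approach replaces all of that computation by a single variational characterisation: both methods return the unique minimiser of $\|b-Ax\|_2$ over the \emph{same} affine set $x^{(k)}+\mathcal{S}_{k+1}$, so they must agree. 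The new ingredient you supply (and which the paper never isolates) is the local-orthogonality lemma $r^{(k+1)}\perp A\,\mathrm{span}\{p^{(k+1-m)},\ldots,p^{(k)}\}$, which is exactly what guarantees that ORTHOMIN's one-dimensional step along $p^{(k)}$ already attains the full $m$-dimensional minimum; this is the clean explanation of why the window sizes pair as $m\leftrightarrow m{-}1$. Your treatment of the initial phase $k{+}1\le m$ via \Cref{thm:CeqivG} is also tidier than the paper's outer induction on $m$. What the paper's approach buys is an explicit identification $\Delta x_C^{(k)}=\alpha_{C,m}^{(k+1)}p^{(k)}$ and the pairwise $A^{T}\!A$-orthogonality of the $\Delta x_C^{(j)}$ as a by-product, which may be useful elsewhere; what yours buys is a proof that makes the mechanism transparent and would adapt more readily to other truncated minimum-residual schemes.
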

\begin{proof}
    We proof \Cref{thm:CeqivO} by induction on $m$.
    
    Let us start with $m = 1$. At step $k+1$ of CROP(1) iterate $\displaystyle x_C^{(k+1)}$ is determined directly from $\displaystyle x_C^{(k)}$ and $\displaystyle f_C^{(k)}$, hence making it a fixed-point iteration. According to~\eqref{eq:uffCROP}, the control residuals in CROP(1) are given as
    \begin{equation}
    \label{eq:CROP(1)_control_res}
    \displaystyle 
    f_C^{(k+1)}=\alpha_{C,0}^{(k+1)}f_C^{(k)}+\alpha_{C,1}^{(k+1)}\widetilde f_C^{(k+1)}.
    \end{equation}
    Since $\displaystyle \widetilde f_C^{(k+1)}=(I-A)f_C^{(k)}$ \ and \ $\displaystyle \alpha_{C,0}^{(k+1)}=1-\alpha_{C,1}^{(k+1)}$,  \eqref{eq:CROP(1)_control_res} yields
    \begin{equation}
    \label{eq:CROP1f}
        f_C^{(k+1)}=\big(1-\alpha_{C,1}^{(k+1)}\big)f_C^{(k)}+\alpha_{C,1}^{k+1}\big(I-A\big) f_C^{(k)}=f_C^{(k)}-\alpha_{C,1}^{(k+1)} Af_C^{(k)}.
    \end{equation}
    To obtain a solution of the least-squares problem~\eqref{eq:coefCROP} which minimizes $\displaystyle \|f_C^{(k+1)}\|_2$, 
    \ we need $\displaystyle f_C^{(k+1)}\perp Af_C^{(k)}$ and thus 
    \begin{equation}
    \label{eq:CROP1alpha}
    \alpha_{C,1}^{(k+1)}=\frac{(f_C^{(k)})^TAf_C^{(k)}}{(Af_C^{(k)})^TAf_C^{(k)}}. 
    \end{equation}
    Therefore,
     \begin{equation}
    \label{eq:CROP1x}
    \displaystyle
    x_C^{(k+1)} =\big(1-\alpha_{C,1}^{(k+1)}\big)x_C^{(k)}+\alpha_{C,1}^{(k+1)} \big(x_C^{(k)}+f_C^{(k)}\big)= x_C^{(k)}+\alpha_{C,1}^{(k+1)}f_C^{(k)}.
    \end{equation}
    %
    Hence, iterates $x_C^{(k+1)}$ and residuals $\displaystyle f_C^{(k+1)}$ are updated according to \eqref{eq:CROP1x} and \eqref{eq:CROP1f}, respectively, with $\alpha_{C,1}^{(k+1)}$ computed as in \eqref{eq:CROP1alpha}. Moreover, they are exactly the same as those calculated by the ORTHOMIN($0$), i.e., CROP($1$) = ORTHOMIN($0$).

    Assume that CROP($\ell$) = ORTHOMIN($\ell-1$) for \ $\ell=1,\ldots,m-1$. Then, we can prove CROP($m$) = ORTHOMIN($m-1$) by induction on $k$.
    
    Since $\displaystyle m_C^{(k+1)} = \min\{k+1,m\}$, for $\displaystyle k=0$, $\displaystyle m_C^{(k+1)} = 1$ and CROP($m$) = CROP($1$) = ORTHOMIN($0$), which is the same as the first step of ORTHOMIN($m$) for any $\displaystyle m > 1$.
    For $\displaystyle k+1 < m$, step $\displaystyle k+1$ of CROP($m$) is the same as step $\displaystyle k+1$ of CROP($k+1$), which is equivalent to step $\displaystyle k+1$ of ORTHOMIN($k$) by the induction assumption, and thus is the same as step $\displaystyle k+1$ of ORTHOMIN($m-1$). Hence, we only need to consider the case of $k+1\ge m$. 
    
    Assume that for some $\displaystyle k > 0$ vectors $\Big\{A\Delta x_C^{(k+1-m_C^{(k)})},\ldots, A\Delta x_C^{(k-1)},f_C^{(k)}\Big\}$ are pairwise orthogonal and $\displaystyle x_{CROP(m)}^{(k)} = x_{Omin(m-1)}^{(k)}$. Then, for general CROP(m), the $k+1$ residual is 
    \begin{equation}
        \displaystyle
        f_C^{(k+1)}=\sum\limits_{i=0}^{m-1}\alpha_{C,i}^{(k+1)}f_C^{(k+1-m)}+\alpha_{C,m}^{(k+1)}\widetilde f_C^{(k+1)}.
    \end{equation}
    By \Cref{lem:linrestilde} \ $\displaystyle \widetilde f_C^{(k+1)}=(I-A)f_C^{(k)}$. Then, with  $\displaystyle \alpha_{C,0}^{(k+1)}=\gamma_{C,1}^{(k+1)}$, $\displaystyle \alpha_{C,i}^{(k+1)}=\gamma_{C,i+1}^{(k+1)}-\gamma_{C,i}^{(k+1)}$ and $\displaystyle \gamma_{C,m}^{(k+1)}=1-\alpha_{C,m}^{(k+1)}$, and noting that $\displaystyle \Delta f_C^{(k-m+i)}=-A\Delta x_C^{(k-m+i)}$, we get
\begin{equation}\label{eq:CROPmf}
\begin{split}
\displaystyle
    f_C^{(k+1)}&=\sum\limits_{i=0}^{m}\alpha_{C,i}^{(k+1)}f_C^{(k+1-m)}+\alpha_{C,m}^{(k+1)}(I-A) f_C^{(k)}\\
    &=f_C^{(k)}-\alpha_{C,m}^{k+1} Af_C^{(k)}-\sum_{i=1}^{m-1}\gamma_{C,i}^{(k+1)}A\Delta x_C^{(k-m+i)}.
\end{split}
\end{equation}
Since by the induction assumption $\displaystyle \{A\Delta x_C^{(k-m+1)},\ldots,A\Delta x_C^{(k-1)},f_C^{(k)}\}$ are pairwise orthogonal, if we orthogonalize $\displaystyle Af_C^{(k)}$ against all $\displaystyle A\Delta x_C^{(k-m+i)}$ \ for \ $i=1,\ldots,m-1$ and let
\begin{equation}\label{eq:CROPortho}
\displaystyle
    Ap^{(k)}=Af_C^{(k)}-\sum_{i=1}^{m-1} \frac{(Af_C^{(k)})^TA\Delta x_C^{(k-m+i)}}{(A\Delta x_C^{(k-m+i)})^TA\Delta x_C^{(k-m+i)}} A\Delta x_C^{(k-m+i)},
\end{equation}
then~\eqref{eq:CROPmf} yields
$
\displaystyle
    f_C^{(k+1)}=f_C^{(k)}-\alpha_{C,m}^{k+1} Ap^{(k)}+\sum_{i=1}^{m-1}c_iA\Delta x_C^{(k-m+i)}.$
To obtain a solution of the least-squares problem \eqref{eq:coefCROP} which minimizes $\displaystyle \|f_C^{(k+1)}\|_2$, \ we need
$\displaystyle f_C^{(k+1)}\perp Ap^{(k)}$ and $\displaystyle f_C^{(k+1)}\perp A\Delta x_C^{(k-m+i)}$ for all $\displaystyle i=1,\ldots,m-1$. Thus 
\begin{equation}\label{eq:CROPmalpha}
\displaystyle
  \alpha_{C,m}^{(k+1)}=\frac{(f_C^{(k)})^TAp^{(k)}}{(Ap^{(k)})^TAp^{(k)}}  
\end{equation}
and $c_i=0$ \ for all \ $i=1,\ldots,m-1$. The actual updates of $\displaystyle x_C^{(k+1)}$ and $\displaystyle f_C^{(k+1)}$ are
\begin{equation}
\label{eq:CROPmxf}
\displaystyle
    x_C^{(k+1)} =x_C^{(k)}+\alpha_{C,m}^{(k+1)} p^{(k)} \qquad \mbox{ and } \qquad
    f_C^{(k+1)} =f_C^{(k)}-\alpha_{C,m}^{(k+1)} Ap^{(k)}.
\end{equation}
Since $\displaystyle \Delta x_C^{(k)}=\alpha_{C,m}^{(k+1)} p^{(k)}$ \ is parallel to $p^{(k)}$, we can rewrite \eqref{eq:CROPortho} by replacing $\displaystyle \Delta x_C^{(k-m+i)}$ \ with \ $p^{(k-m+i)}$. Hence, the orthogonalization step \eqref{eq:CROPortho} and the update formulas \eqref{eq:CROPmxf} are the same as those in ORTHOMIN($m-1$) method. Therefore, in the case of solving linear system, the CROP($m$) algorithm is equivalent to ORTHOMIN($m-1$).
\end{proof}

\begin{remark}
    Since for symmetric matrix $A$, ORTHOMIN(0) is Minimal Residual (MINRES) method~\cite[Section 5.3.2]{Saad2003}, and ORTHOMIN(1) is the CR method, for symmetric linear systems CROP(1) is equivalent to MINRES and CROP(2) to the CR method.
\end{remark}

\subsection{Connection with Multisecant Methods}
\label{sec:ConMulti}
Generalized Broyden's second method is a multisecant method~\cite{FanS09} equivalent to Anderson Acceleration method~\cite{Eye96,FanS09} with iterates generated according to the update formula
\begin{equation}\label{eq:guf}
\displaystyle
    x^{(k+1)}=x^{(k)}-G^{(k)}f^{(k)},
\end{equation}
where $\displaystyle G^{(k)}$ is approximated inverse of the Jacobian updated by 
\begin{equation}\label{eq:gufxmJ}
\displaystyle
    G^{(k)}= G^{(k-m)}+(\mathscr{X}^{(k)}-G^{(k-m)}\mathscr{F}^{(k)})\Big[(\mathscr{F}^{(k)})^T\mathscr{F}^{(k)}\Big]^{-1}(\mathscr{F}^{(k)})^T,
\end{equation}
which minimizes $\displaystyle \|G^{(k)}-G^{(k-m)}\|_F$ \ subject to \
$
\displaystyle
G^{(k)}\mathscr{F}^{(k)}=\mathscr{X}^{(k)}.
$
Here, $\displaystyle \mathscr{X}^{(k)}$ and $\displaystyle \mathscr{F}^{(k)}$ represent the differences of iterates and residuals, respectively, i.e.,
$$
\displaystyle
\mathscr{X}^{(k)}=[\Delta x^{(k-m)},\ldots,\Delta x^{(k-1)}]
\quad \mbox{ and } \quad
\mathscr{F}^{(k)}=[\Delta f^{(k-m)},\ldots,\Delta f^{(k-1)}].
$$
Therefore, the update formula \eqref{eq:guf} can be written as

\begin{equation}\label{eq:gufxm}
\displaystyle
    x^{(k+1)}=x^{(k)}-G^{(k-m)}f^{(k)}-(\mathscr{X}^{(k)}-G^{(k-m)}\mathscr{F}^{(k)})\Big[(\mathscr{F}^{(k)})^T\mathscr{F}^{(k)}\Big]^{-1}(\mathscr{F}^{(k)})^Tf^{(k)}.
\end{equation}
When $\displaystyle G^{(k-m)}=-\beta I$, $\displaystyle \mathscr{X}^{(k)}=\mathscr{X}_A^{(k)}$ and $\displaystyle \mathscr{F}^{(k)}=\mathscr{F}_A^{(k)}$, \eqref{eq:gufxm} reduces to \eqref{eq:aufxm}.
Anderson Acceleration forms an approximate inverse of the Jacobian implicitly
$$\displaystyle
G_A^{(k)}=-\beta I+(\mathscr{X}^{(k)}+\beta\mathscr{F}_A^{(k)})\Big[(\mathscr{F}_A^{(k)})^T\mathscr{F}_A^{(k)}\Big]^{-1}(\mathscr{F}_A^{(k)})^T,
$$ 
that minimizes $\|G_A^{(k)}+\beta I\|_F$ subject to 
$
\displaystyle
G_A^{(k)}\mathscr{F}_A^{(k)}=\mathscr{X}_A^{(k)}.
$
Following the same notation, the updates of CROP algorithm can be written as
\begin{equation}
\label{eq:cropuf}
\displaystyle
 x_C^{(k+1)} = \widetilde x_C^{(k+1)} -\mathscr{X}_C^{(k+1)}\Big[(\mathscr{F}_C^{(k+1)})^T\mathscr{F}_C^{(k+1)}\Big]^{-1}(\mathscr{F}_C^{(k+1)})^T\widetilde f_C^{(k+1)}.
\end{equation}
With a common framework in place, we can now
describe the connection between CROP algorithm and multisecant methods. First, if
we consider $\displaystyle \widetilde x_C^{(k)}$ as an iterate, then we can view \eqref{eq:cropuf} as an update of a generalized Broyden's second method.
%
%
%
When $\displaystyle G^{(k-m)}=0$, $\displaystyle \mathscr{X}^{(k)}=\mathscr{X}_C^{(k)}$ and  $\displaystyle \mathscr{F}^{(k)}=\mathscr{F}_C^{(k)}$, \eqref{eq:gufxm} reduces to \eqref{eq:cropuf}.
CROP algorithm forms implicitly an approximate inverse of the Jacobian 
$$
\displaystyle
G_C^{(k+1)}=\mathscr{X}_C^{(k+1)}\Big[(\mathscr{F}_C^{(k+1)})^T\mathscr{F}_C^{(k+1)}\Big]^{-1}(\mathscr{F}_C^{(k+1)})^T,
$$ 
that minimizes $\displaystyle \|G_C^{(k+1)}\|_F$ subject to $\displaystyle G_C^{(k+1)}\mathscr{F}_C^{(k+1)}=\mathscr{X}_C^{(k+1)}$.
\section{Convergence Theory for CROP Algorithm}
\label{sec:convergence}

In this section, we provide some initial results on the convergence of CROP algorithm. 
The first convergence result for Anderson Acceleration method was given in~\cite{TotK15}, under the assumption of a contraction mapping. Following this work, several other convergence results utilizing various different assumptions were established, see for example 
\cite{ChuDE20,EvaPRX20,Pol21,PolRX19,Reb23}. Most of the existing assumptions are needed to determine the existence and uniqueness of the exact solution $x^*$ of $f(x) = 0$ in an open set. Mappings $f$ and $g$ are usually chosen to be Lipschitz continuous. In this section, we use the same assumptions and follow the same process as the one in~\cite{TotK15}.
We first prove in \Cref{subsec:linear} that for the linear case CROP algorithm is q-linearly convergent. Then, in \Cref{subsec:nonlinear}, we show that for the general nonlinear case, the convergence is r-linear.

\subsection{Convergence of CROP Algorithm for Linear Problems}
\label{subsec:linear}

Let us consider applying the CROP algorithm to the simple linear problem $\displaystyle Ax=b$, with a nonsingular matrix $\displaystyle A \in \R^{n \times n}$ and vector $\displaystyle b \in \R^n$. Then, the associated residual (error) vectors can be chosen as $\displaystyle f(x) = b - Ax$, with the corresponding $\displaystyle g(x) = b + (I-A)x$.

Let us present our first convergence result.

\begin{theorem}\label{thm:CROPlin}
Let us consider solving the linear system $\displaystyle Ax = b$. If $\displaystyle \|I-A\|=c<1$, then CROP algorithm converges to the exact solution $\displaystyle x^*=A^{-1}b$, and the control and real residuals converge q-linearly to zero with the q-factor $c$.
\end{theorem}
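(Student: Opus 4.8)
The plan is to combine the two linear-problem lemmas already established (\Cref{lem:linres} and \Cref{lem:linrestilde}) with the minimum-norm characterization of the CROP control residual. First I would dispose of the trivial case: if $f_C^{(0)} = b - Ax_C^{(0)} = 0$ then $x_C^{(0)} = x^*$ and there is nothing to prove, so I assume $f_C^{(0)} \neq 0$ and analyze a generic step $k \to k+1$ (if $f_C^{(k)} = 0$ at some finite $k$, the algorithm has already reached the exact solution, so I may assume $f_C^{(k)} \neq 0$ throughout).

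The key step is the observation that $f_C^{(k+1)} = F_C^{(k+1)}\alpha_C^{(k+1)}$ solves the constrained least-squares problem \eqref{eq:coefCROP}, i.e., it is the Euclidean-smallest element of the affine set $\{F_C^{(k+1)}\alpha : \sum_i \alpha_i = 1\}$. Since $\widetilde f_C^{(k+1)}$ is the last column of $F_C^{(k+1)}$, it is itself feasible for this problem (put all the weight on that column, which already satisfies the affine constraint), so that
\[
\|f_C^{(k+1)}\|_2 \le \|\widetilde f_C^{(k+1)}\|_2 = \|(I-A)f_C^{(k)}\|_2 \le \|I-A\|\,\|f_C^{(k)}\|_2 = c\,\|f_C^{(k)}\|_2,
\]
where the middle equality is \Cref{lem:linrestilde} and the last inequality is submultiplicativity of the norm. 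This is exactly q-linear contraction with q-factor $c$, and iterating gives $\|f_C^{(k)}\|_2 \le c^k\|f_C^{(0)}\|_2 \to 0$.

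It then remains to transfer this to the real residuals and to the iterates. By \Cref{lem:linres}, $r_C^{(k)} = f_C^{(k)}$ for every $k$, so the real residuals obey the same q-linear bound with q-factor $c$. Finally, since $r_C^{(k)} = b - Ax_C^{(k)}$ and $A$ is nonsingular, $x_C^{(k)} - x^* = -A^{-1}r_C^{(k)}$, whence $\|x_C^{(k)} - x^*\|_2 \le \|A^{-1}\|\,c^k\,\|f_C^{(0)}\|_2 \to 0$, establishing convergence to $x^* = A^{-1}b$.

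I do not anticipate a real obstacle here: the only slightly delicate point is making sure $\widetilde f_C^{(k+1)}$ genuinely lies in the feasible set of \eqref{eq:coefCROP} — it does, being a column of $F_C^{(k+1)}$, with the constraint $\sum_i \alpha_i = 1$ met by the corresponding standard basis vector — together with the bookkeeping for possible early termination when some $f_C^{(k)}$ vanishes. Everything else reduces to the two lemmas and submultiplicativity of the norm.
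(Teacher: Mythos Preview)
Your argument is correct and matches the paper's proof essentially line for line: the paper also uses the feasibility of $\widetilde f_C^{(k+1)}$ in \eqref{eq:coefCROP} to get $\|f_C^{(k+1)}\|_2\le\|\widetilde f_C^{(k+1)}\|_2$, then invokes \Cref{lem:linrestilde} and \Cref{lem:linres} exactly as you do. The only cosmetic difference is in the final step for the iterates: you bound $\|x_C^{(k)}-x^*\|_2$ via $\|A^{-1}\|$, whereas the paper uses the equivalent estimate $(1-c)\|x_C^{(k)}-x^*\|_2\le\|r_C^{(k)}\|_2$ coming from $\|I-A\|=c$.
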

\begin{proof} Since 
$ \displaystyle f_C^{(k+1)}=\sum_{i=0}^{m_C^{(k+1)}-1}\alpha_{C,i}^{(k+1)} f_C^{(k+1-m_C^{(k+1)}+i)}+\alpha_{C,m_C^{(k+1)}}^{(k+1)} \widetilde f_C^{(k+1)}$
is the least-squares residual corresponding to \eqref{eq:coefCROP}, we have 
$$\displaystyle \|f_C^{(k+1)}\|_2 \le\|\widetilde f_C^{(k+1)}\|_2.$$
Moreover, by \Cref{lem:linrestilde}
$\displaystyle \widetilde f_C^{(k+1)} =(I-A) f_C^{(k)}.$
Finally by \Cref{lem:linres}, we obtain
\begin{equation}
\displaystyle
\nonumber
\|f_C^{(k+1)}\|_2 \le\|\widetilde f_C^{(k+1)}\|_2\le c\|f_C^{(k)}\|_2 \quad \mbox{ and } \quad \|r_C^{(k+1)}\|_2 \le c\|r_C^{(k)}\|_2.
\end{equation} 
\end{proof}

Now, if we set $\displaystyle e^{(k)} = x^{(k)}-x^*$, then $\displaystyle f(x^{(k)})=b-Ax^{(k)}=A\big(x^*-x^{(k)}\big)=-Ae^{(k)}$. 
Consequently, following \Cref{thm:CROPlin} yields
$$
\displaystyle
(1-c)\|e^{(k)}\|_2\le\|f(x^{(k)})\|_2\le c^k\|f(x^{(0)})\|_2\le c^k(1+c)\|e^{(0)}\|_2.$$
Hence, $$\|e^{(k)}\|_2\le\left(\frac{1+c}{1-c}\right)c^k\|e^{(0)}\|_2.$$
To fully understand the convergence of CROP method for linear problems, we refer to the equivalence of CROP and GMRES, and CROP($m$) and ORTHOMIN($m-1$).

\begin{remark}
The convergence of ORTHOMIN is shown in~\cite{Eisenstat1983}. Note that CROP algorithm with no truncation, ORTHOMIN and GMRES are all mathematically equivalent methods. 
\end{remark}


\subsection{Convergence of the CROP Algorithm for Nonlinear Problems}
\label{subsec:nonlinear}

In the case of nonlinear problems, investigating the convergence of fixed-depth CROP~\Cref{alg:CROP} or CROP($m$) algorithm, requires an assumption of the functions $f$ and $g$ to be good enough. Let us consider the following assumption.
\begin{assumption} \label{as:nl} Consider a nonlinear problem \eqref{eq:MainProblem} such that
    \begin{enumerate}
        \item there exists an $x^*$ such that $f(x^*)=0$ and $g(x^*)=x^*$,
        \item function $g$ is Lipschitz continuously differentiable in the ball $\mathcal{B}_{\widehat\rho}(x^*)$ for some $\widehat\rho>0$ with Lipschitz constant $c\in(0,1)$, and
        \item $f'$ is Lipschitz continuous with a Lipschitz constant $L$.
    \end{enumerate}
\end{assumption}
In the case of no truncation, equivalence between CROP algorithm and Anderson Acceleration method was established in \Cref{thm:AequivC}. For truncated CROP($m$) algorithm, we can still try to explore the relation between the two methods. Let us investigate the connection between $\displaystyle \widetilde x_C$ and $\displaystyle x_C$, $\displaystyle \widetilde f_C$ and $\displaystyle f_C$. Note that when $\displaystyle m\neq k$, $\displaystyle \widetilde x_C$ and $\displaystyle \widetilde f_C$ are not the iterates and residuals of Anderson Acceleration method.

For CROP($m$) method, we have
$$
\displaystyle
X_C^{(k)}=\widetilde X_C^{(k)} A_0 A_1\cdots A_k \quad \mbox{ and } \quad
F_C^{(k)}=\widetilde F_C^{(k)} A_0 A_1\cdots A_k,
$$
with
$$
\displaystyle
X_C^{(k)}=\begin{bmatrix} x_C^{(0)}&\cdots&x_C^{(k)}\end{bmatrix}
\quad \mbox{ and } \quad
F_C^{(k)}=\begin{bmatrix} f_C^{(0)}&\cdots&f_C^{(k)}\end{bmatrix},$$
and $\displaystyle A_i$ is the identity matrix with the $\displaystyle (i+1)$-th column changed to the coefficient vector $\displaystyle \alpha_C^{(i)}=\big[\alpha_{C,0}^{(i)},\ldots,\alpha_{C,m_C^{(i)}}^{(i)}\big]^T$ inserted in range of rows starting at index $(i-m_C^{(i)}+1)$ and ending at index $(i+1)$, i.e.,
\[A_0=I, \ A_1=
\begin{bmatrix} 1&\alpha_{C,0}^{(1)}&0&\cdots\\ 
0 &\alpha_{C,1}^{(1)}&0&\cdots\\
0 &0 &1& \cdots\\
\vdots &\vdots&\vdots &\ddots\end{bmatrix}, \
A_k=
\begin{bmatrix} 1&0&\cdots&0&0\\ 
\vdots  &\ddots &\cdots &\vdots &\vdots\\
0       &0      &\cdots &0      &\alpha_{C,0}^{(k)}\\
\vdots  &\vdots &\vdots &\vdots &\vdots\\
0       &0      &\cdots &1      &\alpha_{C,m_C^{(k)}-1}^{(k)}\\
0       &0      &\cdots &0      &\alpha_{C,m_C^{(k)}}^{(k)}
\end{bmatrix}, k = 0,1, \ldots
\]

\noindent
Since matrices $\displaystyle A_i$ have column sum $1$, $\displaystyle x_C^{(k)}$ and $\displaystyle f_C^{(k)}$, as the last column of $\displaystyle X_C^{(k)}$ and $\displaystyle F_C^{(k)}$, respectively, can be written as
$$
\displaystyle
x_C^{(k)}=\sum_{j=0}^{k} s_j^{(k)} \widetilde x_C^{(k)} \quad \mbox{and} \quad f_C^{(k)}=\sum_{j=0}^{k} s_j^{(k)} \widetilde f_C^{(k)}, \quad \mbox{ with } \sum\limits_{j=0}^{k} s_j^{(k)}=1.$$ 

Now, we impose the following assumption that will allow us to establish the convergence result for CROP($m$) along the lines of \cite[Theorem 2.3]{TotK15}.
\begin{assumption} \label{as:nlm}
For all $\displaystyle k>0$, there exists $M > 0$ such that $\displaystyle \sum\limits_{j=0}^{k} |s_j^{(k)}| < M$.
\end{assumption}

Let us consider \cref{as:nl} again. From the Lipschitz condition for function $\displaystyle g$, $\displaystyle \|g(x)-g(x^*)\|_2\le c\|x-x^*\|_2$ or equivalently $\|f(x)-(x-x^*)\|_2\le c\|x-x^*\|_2$. By the triangle inequality we get
\begin{equation}\label{eq:reserr}
\displaystyle
    (1-c)\|x-x^*\|_2\le \|f(x)\|_2\le (1+c)\|x-x^*\|_2.
\end{equation}
Moreover, by the Lipschitz condition on $\displaystyle f'$, there exists $\displaystyle \rho>0$ sufficiently small such that in the ball  $\displaystyle \mathcal{B}_{\rho}(x^*)$ we have 
\begin{equation}\label{eq:fLip}
    \|f(x)-f'(x^*)(x-x*)\|_2\le \frac{L}{2}\|x-x^*\|_2^2,
\end{equation}
which can be written in terms of $g$, as
\begin{equation}\label{eq:gLip}
\displaystyle
    \|g(x)-g'(x^*)(x-x*)-x^*\|_2\le \frac{L}{2}\|x-x^*\|_2^2.
\end{equation}

\begin{theorem}\label{thm:CROPnlm}
    Let \Cref{as:nl} and \Cref{as:nlm} hold and let $\displaystyle c<\widehat c<1$. Then, if $\displaystyle x_{C,m}^{(0)}$ is sufficiently close to $x^*$, the control residual $\displaystyle f_C^{(k)}$ of CROP$(m)$ and CROP-Anderson($m$) algorithm and the residual $\displaystyle f_{CA}^{(k)}=\widetilde f_C^{(k)}$ satisfy
    $$
    \|f_C^{(k)}\|_2\le \widehat c^k\|f_C^{(0)}\|_2  \quad \mbox{ and } \quad \|f_{CA}^{(k)}\|_2\le \widehat c^k \|f_{CA}^{(0)}\|_2.$$
    Moreover, this implies r-linear convergence of CROP$(m)$ and CROP-Anderson($m$) algorithm with r-factor no greater than $\displaystyle \widehat c$.
\end{theorem}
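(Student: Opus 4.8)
The plan is to follow the template of \cite[Theorem 2.3]{TotK15} for Anderson Acceleration, adapting it to the CROP$(m)$ recursion via the representations $x_C^{(k)}=\sum_{j=0}^k s_j^{(k)}\widetilde x_C^{(k)}$ and $f_C^{(k)}=\sum_{j=0}^k s_j^{(k)}\widetilde f_C^{(k)}$ together with \Cref{as:nlm}. First I would set up the one-step estimate. Using \eqref{eq:tildex}, $\widetilde x_C^{(k+1)}=x_C^{(k)}+f_C^{(k)}=g(x_C^{(k)})$ only in the linear case, so more carefully $\widetilde f_C^{(k+1)}=f(\widetilde x_C^{(k+1)})=f(x_C^{(k)}+f_C^{(k)})$. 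The key point is that $f_C^{(k)}$ is the least-squares residual of \eqref{eq:coefCROP}, so $\|f_C^{(k+1)}\|_2\le\|\widetilde f_C^{(k+1)}\|_2$; hence it suffices to bound $\|\widetilde f_C^{(k+1)}\|_2$. Writing $f(x)=g(x)-x$ and using the Lipschitz-differentiability of $g$ near $x^*$ (assumption~2 in \Cref{as:nl}) together with \eqref{eq:gLip}, I would Taylor-expand $g$ around $x_C^{(k)}$ (or around $x^*$) to obtain an estimate of the form
\[
\|\widetilde f_C^{(k+1)}\|_2 \;\le\; c\,\|f_C^{(k)}\|_2 \;+\; \mathcal{O}\!\big(\|x_C^{(k)}-x^*\|_2^2\big)\;+\;\mathcal{O}\!\big(\|f_C^{(k)}\|_2^2\big),
\]
where the leading linear coefficient $c$ comes from the Lipschitz constant of $g$ (equivalently, the bound $\|g'(x^*)\|_2\le c$).

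Next I would convert the quadratic remainder terms into something controllable by induction. Here \Cref{as:nlm} does the work: from $x_C^{(k)}-x^*=\sum_{j=0}^k s_j^{(k)}(\widetilde x_C^{(k)}-x^*)$ and the equivalence \eqref{eq:reserr} between residual norm and error norm, one gets $\|x_C^{(k)}-x^*\|_2\le \frac{M}{1-c}\max_{j}\|\widetilde f_C^{(\cdot)}\|_2$-type bounds, so the $\mathcal{O}(\|x_C^{(k)}-x^*\|^2)$ terms are dominated by a constant times $\big(\max_{j\le k}\|f_{CA}^{(j)}\|_2\big)^2$. Assuming inductively that $\|f_C^{(j)}\|_2\le \widehat c^{\,j}\|f_C^{(0)}\|_2$ (and the analogous bound for $f_{CA}^{(j)}=\widetilde f_C^{(j)}$) for all $j\le k$, the quadratic terms contribute at most $K\,\widehat c^{\,k}\|f_C^{(0)}\|_2\cdot\widehat c^{\,0}\|f_C^{(0)}\|_2$ for some constant $K=K(M,L,c)$. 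Choosing $x_{C,m}^{(0)}$ close enough to $x^*$ that $\|f_C^{(0)}\|_2$ is small enough to make $c+K\|f_C^{(0)}\|_2\le\widehat c$, the induction closes: $\|f_C^{(k+1)}\|_2\le\|\widetilde f_C^{(k+1)}\|_2\le\widehat c\,\|f_C^{(k)}\|_2\le\widehat c^{\,k+1}\|f_C^{(0)}\|_2$. The same chain gives $\|f_{CA}^{(k+1)}\|_2=\|\widetilde f_C^{(k+1)}\|_2\le\widehat c^{\,k+1}\|f_{CA}^{(0)}\|_2$ since $\widetilde f_C^{(0)}$ and $f_C^{(0)}$ coincide at start-up.

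Finally, r-linear convergence of the iterates follows by feeding these residual bounds back through \eqref{eq:reserr}: $\|x_C^{(k)}-x^*\|_2\le\frac{1}{1-c}\|f(x_C^{(k)})\|_2$, and since by \Cref{lem:linres}-type reasoning the real residual is controlled by the control residual and the error (more precisely, one uses the representation of $x_C^{(k)}$ through the $\widetilde x_C$'s again to relate $f(x_C^{(k)})$ to the $\widetilde f_C^{(j)}$'s), one obtains $\|x_C^{(k)}-x^*\|_2\le C\,\widehat c^{\,k}$, i.e.\ r-factor at most $\widehat c$; likewise for CROP-Anderson using $\widetilde x_C^{(k)}$ as the iterate. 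The main obstacle I anticipate is the bookkeeping in the second step: unlike the untruncated case of \Cref{thm:AequivC}, $\widetilde x_C^{(k)}$ is \emph{not} a genuine Anderson iterate, so the quadratic remainder must be expressed purely in terms of the $s_j^{(k)}$ coefficients and the previously bounded residuals, and one must be careful that the constant $M$ from \Cref{as:nlm} enters the smallness condition on $\|f_C^{(0)}\|_2$ without creating a circular dependence (the threshold radius $\rho$ must be chosen first, then $M$ is a property of the run within that ball — a point that, strictly, interlocks \Cref{as:nlm} with the neighborhood size and should be stated with care).
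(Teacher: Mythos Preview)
Your overall strategy---the Toth--Kelley induction template, the least-squares inequality $\|f_C^{(k+1)}\|_2\le\|\widetilde f_C^{(k+1)}\|_2$, and a smallness condition on $\|f_C^{(0)}\|_2$ to absorb quadratic remainders---matches the paper's. But there is a real gap in your one-step estimate. You write $\widetilde f_C^{(k+1)}=f(x_C^{(k)}+f_C^{(k)})$ and then assert that a Taylor expansion ``around $x_C^{(k)}$ (or around $x^*$)'' yields $\|\widetilde f_C^{(k+1)}\|_2\le c\,\|f_C^{(k)}\|_2+\text{quadratic}$. This is the crux, and it does not follow from either expansion as stated: since $f_C^{(k)}\neq f(x_C^{(k)})$ in the nonlinear case, expanding at $x_C^{(k)}$ produces the leading term $f(x_C^{(k)})+f'(x_C^{(k)})f_C^{(k)}$, in which the \emph{real} residual $f(x_C^{(k)})$ appears and is not a~priori controlled by $f_C^{(k)}$; expanding at $x^*$ gives $f'(x^*)\big[(x_C^{(k)}-x^*)+f_C^{(k)}\big]$, and again the first bracket is not small relative to $f_C^{(k)}$ without further structure.

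The paper closes this gap by invoking the $s_j^{(k)}$ representation \emph{at the level of the iterate}, not only to bound remainders: from $x_C^{(k)}=\sum_j s_j^{(k)}\widetilde x_C^{(j)}$ and $f_C^{(k)}=\sum_j s_j^{(k)}\widetilde f_C^{(j)}=\sum_j s_j^{(k)}f(\widetilde x_C^{(j)})$ one gets the exact identity
\[
\widetilde x_C^{(k+1)}=x_C^{(k)}+f_C^{(k)}=\sum_{j=0}^{k}s_j^{(k)}\bigl(\widetilde x_C^{(j)}+f(\widetilde x_C^{(j)})\bigr)=\sum_{j=0}^{k}s_j^{(k)}\,g\bigl(\widetilde x_C^{(j)}\bigr).
\]
Each $g(\widetilde x_C^{(j)})$ is then expanded at $x^*$ via \eqref{eq:gLip}; after applying $f'(x^*)$ and using the commutativity $f'(x^*)g'(x^*)=g'(x^*)f'(x^*)$ (since $f'=g'-I$), the linear part collapses \emph{exactly} to $g'(x^*)f_C^{(k)}$, delivering the clean coefficient $c$. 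The quadratic remainder is $\sum_j|s_j^{(k)}|\,\|\Delta^{(j)}\|_2$ with $\|\Delta^{(j)}\|_2\le\tfrac{L}{2}\|\widetilde x_C^{(j)}-x^*\|_2^2$---a sum over \emph{all} past $\widetilde x_C^{(j)}$, which is where $M$ enters, not merely $\mathcal{O}(\|x_C^{(k)}-x^*\|_2^2)$ as you wrote. Correspondingly the induction is carried on $\|\widetilde f_C^{(k)}\|_2$ (a genuine residual) jointly with the ball condition $\widetilde x_C^{(k)}\in\mathcal{B}_\rho(x^*)$, and the CROP bound follows a~posteriori from $\|f_C^{(k)}\|_2\le\|\widetilde f_C^{(k)}\|_2$. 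Your closing worry about the interplay of $M$ and $\rho$ is well taken; the paper simply treats \Cref{as:nlm} as an a~priori hypothesis, independent of the chosen $\rho$.
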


\begin{proof}
    Since $\displaystyle  \lim\limits_{\rho \to 0} \frac{2(1-c)c\widehat c^k+ML\rho}{2(1-c)-L\rho} = c\widehat c^k<\widehat c^{k+1}$, we can choose $\rho$ small enough such that  $\displaystyle \frac{2(1-c)c\widehat c^k+ML\rho}{2(1-c)-L\rho}\le\widehat c^{k+1}$. Suppose $x_C^{(0)}\in \mathcal{B}_\rho(x^*)$, and $x_C^{(0)}$ is sufficiently close to $x^*$ such that $$\displaystyle \frac{M(c+L\rho/2)}{1-c}\|f(x_C^{(0)})\|_2\le\frac{M(1+c)(c+L\rho/2)}{1-c}\|x_C^{(0)}-x^*\|_2\le \rho.
    $$
    Then, we can prove by induction on $k$ that $\displaystyle \|f(\widetilde x_C^{(k)})\|_2\le \widehat c^k\|f(x_C^{(0)})\|_2$ \ and \ $\widetilde x_C^{(k)}\in \mathcal{B}_\rho(x^*)$ for all $k\ge0$. For $k=0$, the conditions are satisfied because $\widetilde x_C^{(0)} = x_C^{(0)}$.
    
    Now, assume that for all $n\le k$, we have $\displaystyle \|f(\widetilde x_C^{(n)})\|_2\le \widehat c^k\|f(x_C^{(0)})\|_2$ and $\widetilde x_C^{(n)}\in \mathcal{B}_\rho(x^*)$.
    By \eqref{eq:gLip} \
    $\displaystyle
    g(\widetilde x_C^{(n)})=x^*+g'(x^*)(\widetilde x_C^{(n)}-x^*)+\Delta^{(n)},$
    with $\|\Delta^{(n)}\|_2\le \frac{L}{2}\|\widetilde x_C^{(n)}-x^*\|_2$. Then, for $n=k+1$, 
    \vspace{-0.3in}
    \begin{equation}
    \nonumber
    \begin{split}
        \widetilde x_C^{(k+1)}&=x^*+\sum_{j=0}^{k} s_j^{(k)} \big(g'(x^*)(\widetilde x_C^{(j)}-x^*)+\Delta^{(j)}\big)\\
        &=x^*+\sum_{j=0}^{k} s_j^{(k)} g'(x^*)(\widetilde x_C^{(j)}-x^*)+\sum_{j=0}^{k}s_j^{(k)}\Delta^{(j)}.
    \end{split}
    \end{equation}
    Since $\displaystyle \big\|\sum_{j=0}^{k}s_j^{(k)}\Delta^{(j)}\big\|_2=\sum_{j=0}^{m_C^{(k)}}\big|s_j^{(k)}\big| \frac{L}{2} \big\|\widetilde x_C^{(j)}-x^*\big\|_2^2$,
    following \eqref{eq:reserr} we get
    $$
    \displaystyle
    \|\widetilde x_C^{(j)}-x^*\|_2<\frac{1}{1-c}\|f(\widetilde x_C^{(j)})\|_2\le\frac{1}{1-c}\|f(x_C^{(0)})\|_2.$$
    Then $\displaystyle \|\widetilde x_C^{(j)}-x^*\|_2<\rho$ yields
    $
    \|\widetilde x_C^{(j)}-x^*\|_2^2\le\frac{\rho}{1-c}\|f(x_C^{(0)})\|_2.
    $
    Under \Cref{as:nlm}, we have 
    \begin{equation}
    \nonumber
    \begin{split}
        \|\widetilde x_C^{(k+1)}-x^*\|_2&=\big\|\sum_{j=0}^{k} s_j^{(k)} g'(x^*)(\widetilde x_C^{(j)}-x^*)+\sum_{j=0}^{k}s_j^{(k)}\Delta^{(j)}\big\|_2\\ 
        &\le 
        \frac{M(c+L\rho/2)}{1-c}\|f(x_C^{(0)})\|_2\le \rho.
    \end{split}
    \end{equation}
    Thus, $\displaystyle \widetilde x_C^{(k+1)}\in\mathcal{B}_{\rho}(x^*)$ and by \eqref{eq:fLip} \ we obtain \ $
    \displaystyle
    f(\widetilde x_C^{(k+1)})=f'(x^*)(\widetilde x_C^{(k+1)}-x^*)+\Delta^{(k+1)},
    $
    with $\displaystyle \|\Delta^{(k+1)}\|_2\le \frac{L}{2}\|\widetilde x_C^{(k+1)}-x^*\|_2^2\le\frac{L\rho}{2(1-c)}\|f(\widetilde x_C^{(k+1)})\|_2$. Since $\displaystyle f'(x^*)=g'(x^*)-I$ \ and \ $g'(x^*)$ commute,
    \begin{equation}
    \nonumber
    \begin{split}
        f(\widetilde x_C^{(k+1)})&=f'(x^*)\sum_{j=0}^{k} s_j^{(k)} g'(x^*)(\widetilde x_C^{(j)}-x^*)+f'(x^*)\sum_{j=0}^{k}s_j^{(k)}\Delta^{(j)}+\Delta^{(k+1)}\\
        &=g'(x^*)\sum_{j=0}^{k} s_j^{(k)} f'(x^*)(\widetilde x_C^{(j)}-x^*)+f'(x^*)\sum_{j=0}^{k}s_j^{(k)}\Delta^{(j)}+\Delta^{(k+1)}\\
        &=g'(x^*)\sum_{j=0}^{k} s_j^{(k)} (f(\widetilde x_C^{(k)})-\Delta^{(j)})+f'(x^*)\sum_{j=0}^{k}s_j^{(k)}\Delta^{(j)}+\Delta^{(k+1)}\\
        &=g'(x^*)\sum_{j=0}^{k} s_j^{(k)} f(\widetilde x_C^{(j)})-\sum_{j=0}^{k}s_j^{(k)}\Delta^{(j)}+\Delta^{(k+1)}\\
        &=g'(x^*)f_C^{(k+1)}-\sum_{j=0}^{k}s_j^{(k)}\Delta^{(j)}+\Delta^{(k+1)}.
    \end{split}
    \end{equation}
    Now, from $\displaystyle \|f_C^{(k)}\|_2\le\|\widetilde f_C^{(k)}\|_2\le \widehat c^k\|f(x_C^{(0)})\|_2$, we have
    \begin{equation}
    \nonumber
    \begin{split}
        \|f(\widetilde x_C^{(k+1)})\|_2&\le \|g'(x^*)f_C^{(k)}\|+\|\sum_{j=0}^{k}s_j^{(k)}\Delta^{(j)}\|+\|\Delta^{(k+1)}\|_2\\
        &\le c\widehat c^k\|f(x_C^{(0)})\|+\frac{ML\rho}{2(1-c)}\|f(x_C^{(0)})\|_2+\frac{L\rho}{2(1-c)}\|f(\widetilde x_C^{(k+1)})\|_2
    \end{split}
    \end{equation}
    Thus \ $\displaystyle
\|f(\widetilde x_C^{(k+1)})\|_2\le\frac{2(1-c)c\widehat c^k+ML\rho}{2(1-c)-L\rho}\|f(x_C^{(0)})\|_2\le\widehat c^{k+1}\|f(x_C^{(0)})\|_2.$
\end{proof}

Although the control residuals $\displaystyle f_C^{(k)}$ are not the real residuals $\displaystyle r_C^{(k)}$, which may cause a breakdown ($\displaystyle f_C^{(k)}=0$) without finding a reasonable approximation, they do have some good properties.

\begin{theorem}
    The control residuals $\displaystyle f_C^{(k)}$ of CROP($m$) for $m \ge 1$ are nonincreasing.
\end{theorem}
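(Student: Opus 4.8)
The plan is to exploit the variational characterization of $f_C^{(k+1)}$ as the minimizer in the constrained least-squares problem \eqref{eq:coefCROP}. Recall that $f_C^{(k+1)} = F_C^{(k+1)}\alpha_C^{(k+1)}$, where $\alpha_C^{(k+1)}$ minimizes $\|F_C^{(k+1)}\alpha\|_2$ over all $\alpha$ with $\sum_i \alpha_i = 1$, and the columns of $F_C^{(k+1)}$ are $f_C^{(k+1-m_C^{(k+1)})}, \ldots, f_C^{(k)}, \widetilde f_C^{(k+1)}$.

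First I would note that since $m \ge 1$, we have $m_C^{(k+1)} = \min\{k+1,m\} \ge 1$ for every $k \ge 0$, and hence $k+1-m_C^{(k+1)} \le k$; this means that $f_C^{(k)}$ is literally one of the columns of $F_C^{(k+1)}$ (the penultimate one). In particular, the standard basis vector $e$ that selects this column is a feasible point of \eqref{eq:coefCROP}, because its entries sum to $1$, and $F_C^{(k+1)} e = f_C^{(k)}$.

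Then, since $f_C^{(k+1)}$ attains the minimum of $\|F_C^{(k+1)}\alpha\|_2$ over the feasible set, I would simply compare it with the feasible choice $\alpha = e$:
\[
\|f_C^{(k+1)}\|_2 = \min_{\sum_i \alpha_i = 1} \|F_C^{(k+1)}\alpha\|_2 \le \|F_C^{(k+1)} e\|_2 = \|f_C^{(k)}\|_2 .
\]
As this holds for every $k \ge 0$, the sequence $\big(\|f_C^{(k)}\|_2\big)_{k\ge 0}$ is nonincreasing, which is the claim. (Uniqueness of the least-squares minimizer is irrelevant here: whichever minimizer the algorithm selects, its norm equals the optimal value, which is bounded above by $\|f_C^{(k)}\|_2$.)

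There is no genuine obstacle: the result is an immediate consequence of the fact that the new control residual solves a least-squares problem over an affine set one of whose vertices is the old control residual, so this monotonicity is entirely analogous to the residual-norm monotonicity used for CROP in the linear case (\Cref{thm:CROPlin}). The only point worth stating explicitly is the role of $m \ge 1$, namely that it guarantees $f_C^{(k)}$ is retained in $F_C^{(k+1)}$; for $m = 0$ the method reduces to plain fixed-point iteration, no least-squares problem is solved, and the statement does not apply.
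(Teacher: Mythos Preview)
Your proof is correct and follows essentially the same approach as the paper: both argue that $f_C^{(k)}$ appears among the vectors in the affine combination defining $f_C^{(k+1)}$, so the least-squares minimum cannot exceed $\|f_C^{(k)}\|_2$. Your write-up is in fact more explicit than the paper's, spelling out the role of $m\ge 1$ and the feasible choice $\alpha=e$.
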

\begin{proof} Since the coefficients $\displaystyle\alpha_{C,i}^{(k+1)}$ in
\begin{equation}
\label{eq:Thm5.8}
\displaystyle
f_C^{(k+1)}=\sum_{i=0}^{m_C^{(k+1)}-1}\alpha_{C,i}^{(k+1)} f_C^{(k+1-m_C^{(k+1)}+i)}+\alpha_{C,m_C^{(k+1)}}^{(k+1)} \widetilde f_C^{(k+1)}
\end{equation}
are chosen to minimize $\displaystyle \|f_C^{(k+1)}\|$, and $f_C^{(k)}$ is itself an element in the sum on the right of \eqref{eq:Thm5.8}, we must have  $\displaystyle \|f_C^{(k+1)}\|\le\|f_C^{(k)}\|$.
\end{proof}
In the case of smaller values of $m$, the control residuals are approximating better the real residuals. Also, using CROP-Anderson or rCROP algorithm is a good way to avoid the breakdown. We will see some examples in \Cref{sec:numerical}.

\section{Numerical Experiments}
\label{sec:numerical}

In this section, we present a small selection of numerical results illustrating some observations discussed in the previous sections. We consider both linear and nonlinear problems of various sizes. Details regarding implementation and some further numerical examples can be found in Supplementary Materials, Section SM.1 and SM.2.


\subsection{Linear Problems}
\label{subsec:linex}
First, we present numerical experiments to demonstrate the behavior of discussed methods when applied to a linear system $Ax = b$ with a nonsingular matrix $A$.

\begin{example}[Synthetic Problem]
\label{exp:Ex1}
\end{example}
Let us consider a nonsingular matrix $A\in \mathbb{R}^{100\times 100}$ given as a tridiagonal matrix with entries $(1,-4,1)$ and a seven-diagonal matrix with entries $(0,0,1,-4,1,1,1)$, and vector $b\in\mathbb{R}^{100}$ with its first entry $1$ and others $0$.
We choose $f(x) = b-Ax$, $g(x) = x+f(x)$, $maxit=100$, $tol=10^{-10}$, and run Anderson Acceleration method, CROP and CROP-Anderson algorithm with initial vector $x_0 = 0$ and different values of parameter $m$. The corresponding results are shown in \Cref{fig:lin}.
\begin{figure}[!ht]
    \centering
    \begin{subfigure}[b]{0.49\textwidth}
        \centering
        \includegraphics[scale=0.39]{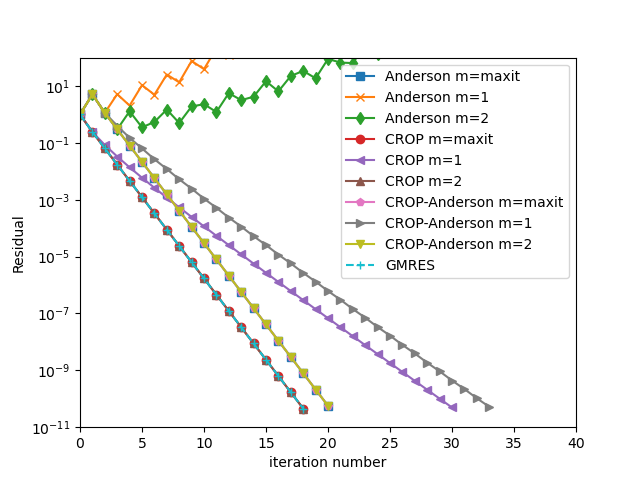}
        \caption{symmetric tridiagonal $A$}
        \label{fig:crop_lin1}
    \end{subfigure}
    \hfill
    \begin{subfigure}[b]{0.49\textwidth}
        \centering
        \includegraphics[scale=0.39]{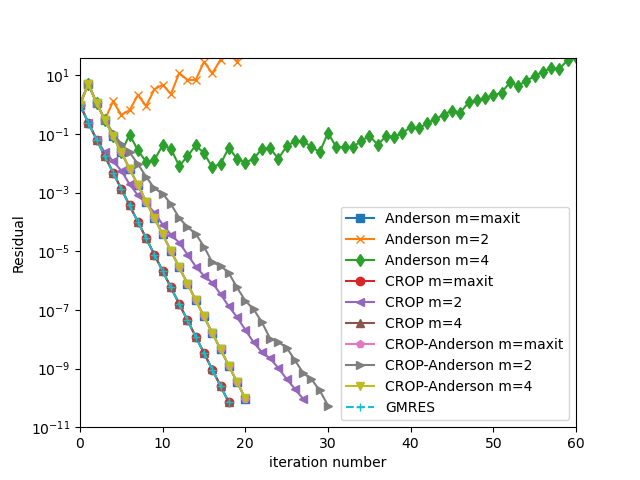}
        \caption{nonsymmetric seven-diagonal $A$}
        \label{fig:crop_lin2}
    \end{subfigure} 
    \caption{A linear problem in Example \eqref{exp:Ex1} with (a) a tridiagonal and (b) a seven-diagonal matrix $A$.}
    \label{fig:lin}
\end{figure}
In \Cref{fig:crop_lin1} we can see that convergence curve of CROP-Anderson method is parallel to the one of CROP algorithm. As we discussed in \Cref{subsec:linear}, CROP algorithm, CROP(2) and GMRES admit the same convergence similarly as Anderson Acceleration method, CROP-Anderson and CROP-Anderson(2) algorithm. Analogous behavior can be observed for CROP, CROP(4) and GMRES method, as well as for Anderson Acceleration method, CROP-Anderson algorithm and CROP-Anderson(4), see \Cref{fig:crop_lin2}.
More linear examples with matrices of different sizes and structures can be found in Supplementary Materials, Section SM.2.





\subsection{Nonlinear Problems}
\label{subsec:nonlinex}
As presented methods are primarily used to accelerate convergence in the case of nonlinear problems, in this section we introduce a variety of nonlinear examples. Starting with small and weakly nonlinear problems, we move towards more complex examples.

\begin{example}[Dominant Linear Part Problem]
\label{exp:Ex4}
\end{example}
Consider a nonlinear problem $\displaystyle Ax+\frac{\mu\|x\|^2}{n}x = b$  with
$\displaystyle  A=tridiag(1,-4,1)\in \mathbb{R}^{n\times n},$ a right-hand side vector $\displaystyle  b\in\mathbb{R}^{n}$ with first entry $1$ and others $0$, $n = 100$ and parameter $\mu=1/100$. We choose $\displaystyle f(x) = Ax+\frac{\mu\|x\|^2}{n}x-b$ \ and \ $\displaystyle  g(x) = x+f(x)$, parameters $\displaystyle  maxit=100$ and $\displaystyle  tol=10^{-10}$, and run Anderson Acceleration method, CROP algorithm and CROP-Anderson method with initial vector $\displaystyle  x^{(0)} = 0$ and different values of parameter $m$. \Cref{fig:nl1} presents our findings.
\begin{figure}[h]
    \centering
    \includegraphics[width=0.49\textwidth]{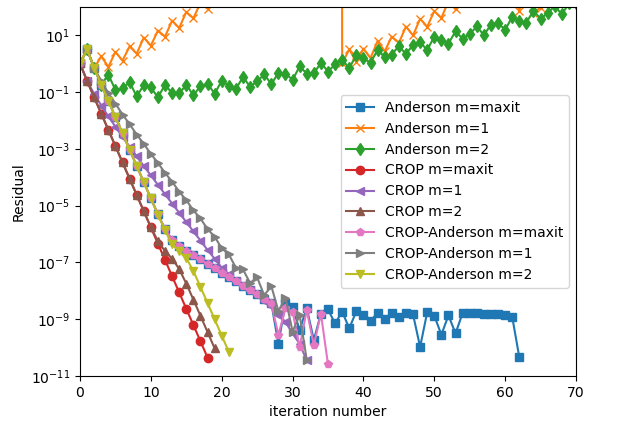}
    \caption{Convergence for a nonlinear problem in Example \eqref{exp:Ex4}.}
    \label{fig:nl1}
    \vspace{-0.25in}
\end{figure}
We can see that the first several steps of all methods are controlled by the linear part of the problem and can be compared with the results displayed in \Cref{fig:crop_lin1}. The difference occurs when the residual reaches $\approx 10^{-6}$. We can see that CROP algorithm, CROP(2), and CROP-Anderson(2) method converge in $18,19$ and $21$ steps, respectively, which is better than the other methods. It is worth mentioning that for CROP algorithm, although the control residuals get below the tolerance, the real residuals may not. Actually, in this example, $\displaystyle \|r_{CROP}^{(18)}\|_2=6.28\cdot 10^{-8}$, $\displaystyle \|r_{CROP(2)}^{(19)}\|_2=9.56\cdot 10^{-11}$ and $\displaystyle \|r_{CROP(1)}^{(32)}\|_2=5.19\cdot 10^{-11}$. In the case of CROP algorithm with small values of $\displaystyle  m$, the control residuals are good approximations of the real residuals. However, this does not have to be the case for the larger values of $\displaystyle m$. We will see some further disadvantages of the control residuals in Example \eqref{exp:Ex5}.

\begin{example}[A Small Nonlinear Problem]
\label{exp:Ex5}
\end{example}
Consider problem \eqref{eq:MainProblem}~\cite[Problem 2]{Hans2022,Hans2024} with
\[g\Big(\begin{bmatrix}
    x_1\\
    x_2
\end{bmatrix}\Big)=\frac{1}{2}
\begin{bmatrix}
    x_1+x_1^2+x_2^2\\
    x_2+x_1^2
\end{bmatrix} \ \mbox{ and exact solution } \ \begin{bmatrix}
    x_1^*\\
    x_2^*
\end{bmatrix} = \begin{bmatrix} 0 \\ 0 \end{bmatrix}.
\]
We set $maxit=100$, $tol=10^{-10}$ and run fixed-point iteration, Anderson Acceleration method, CROP algorithm and CROP-Anderson method with $\displaystyle x^{(0)} = [0.1,0.1]^T$ and different values of parameter $\displaystyle m$.
\begin{figure}[!ht]
    \begin{subfigure}[b]{0.49\textwidth}
        \centering
\includegraphics[scale=0.39]{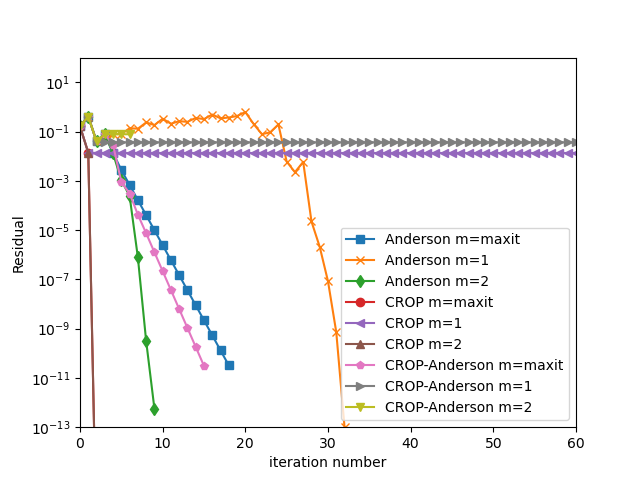}
    \caption{control residuals}
    \label{fig:crop_snl}
    \end{subfigure}
    \hfill
    \begin{subfigure}[b]{0.49\textwidth}
        \centering
    \includegraphics[scale=0.39]{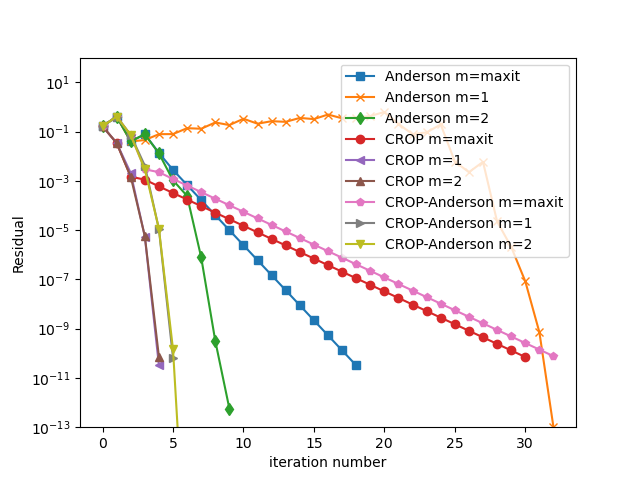}
    \caption{real residuals}
    \label{fig:rcrop_snl}
    \end{subfigure} 
    \caption{A small nonlinear problem in Example \eqref{exp:Ex5} with (a) control residuals and (b) real residuals.}
    \label{fig:snl}
\end{figure}
Note that all variants of CROP algorithm and CROP-Anderson method, except the full CROP-Anderson without truncation ($m=$maxit), are bad, see \Cref{fig:snl}. CROP algorithm and CROP(2) method break down in iteration $2$, but rCROP and rCROP-Anderson converge well. rCROP(1) and rCROP(2) method converge in $4$ iterations, while Anderson(2) method converges in $9$ and the Anderson(1) method in $32$ iterations. Even though using real residuals requires additional function evaluation in each iteration, Example \ref{exp:Ex5} illustrates that this extra cost is worthy as it reduces the number of iterations from $32$ in the case of Anderson Acceleration method to $8$ steps for rCROP algorithm.

\begin{example}[Bratu Problem]
\label{exp:Ex7}
\end{example}
In this example, we consider the Bratu Problem \cite[Section 5.1]{He2024}
\begin{equation}\label{eq:Bratu}
\begin{aligned}
\displaystyle
    \Delta u+\lambda e^u &= 0 \mbox{  in  } \Omega=(0,1)\times(0,1)\\
    u(x,y)&=0 \ \mbox{ for } \  (x,y)\in\partial\Omega
\end{aligned}.
\end{equation}
Using finite difference method with grid size $\displaystyle 100\times100$, the problem becomes
$\displaystyle Lx+h^2\lambda \exp(x)=0$, where $L$ is the $\displaystyle 10000\times 10000$ 2D Laplace matrix and $\displaystyle h=1/101$. We choose $\displaystyle \lambda=0.5$, $\displaystyle f(x) = Lx+h^2\lambda \exp(x)$, $\displaystyle g(x)=x+f(x)$ as well as parameters $maxit=400$, $tol=10^{-10}$, $x^{(0)} = 0$. We run \Cref{alg:Anderson}, \Cref{alg:CROP} and \Cref{alg:CROPAnd} with $\displaystyle m=\infty, 1, 2$. We also compare these methods with nlTGCR method introduced in \cite{He2024}. nlTGCR method is an extension of the Generalized Conjugate Residual (GCR)~\cite{Eisenstat1983} method to nonlinear problems by changing matrix $A$ to the Jacobian $-\Delta f$ in each step of the algorithm.
\begin{figure}[!b]
\vspace{-0.2in}
    \begin{subfigure}[b]{0.49\textwidth}
        \centering \includegraphics[width=1.0\textwidth,height=0.8\textwidth]{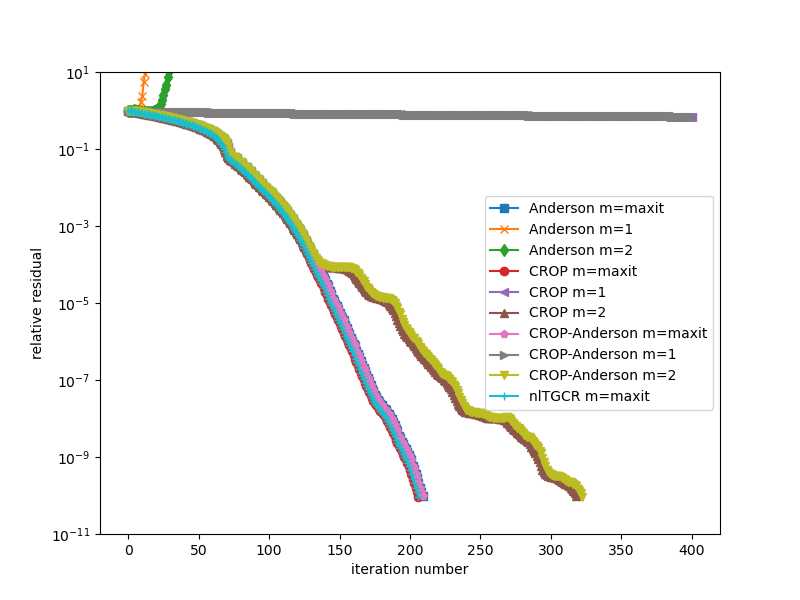}
    \label{fig:crop_Bratu}
    \caption{control residuals}
    \end{subfigure}
    \begin{subfigure}[b]{0.49\textwidth}
        \centering
\includegraphics[width=1.0\textwidth,height=0.8\textwidth]{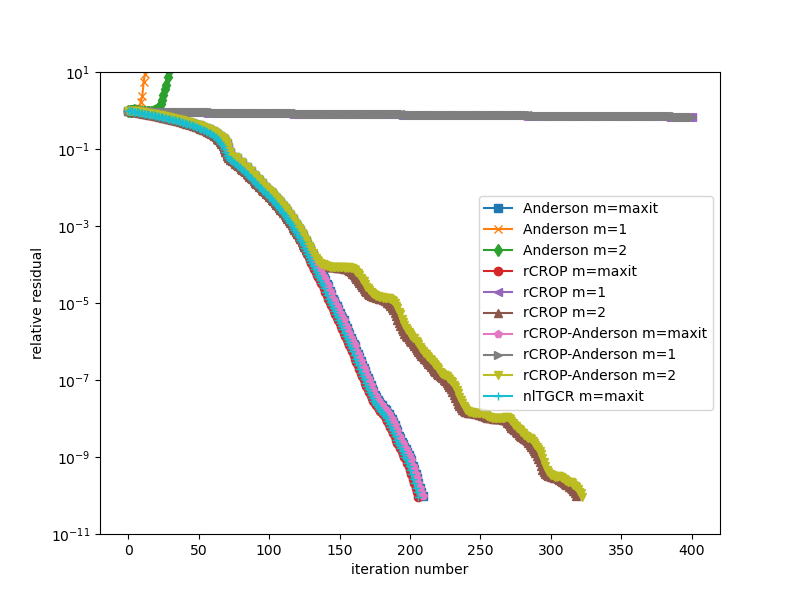}
    \label{fig:rcrop_Bratu}
    \caption{real residuals}
    \end{subfigure} 
    \caption{Bratu problem from Example \eqref{exp:Ex7} with (a) control residuals and (b) real residuals.}
    \label{fig:Bratu}
\end{figure}
Since the Bratu problem \eqref{eq:Bratu} is symmetric and has small nonlinearities, the rCROP results are almost the same as those of CROP algorithm, and confirm that $m=2$ is a good choice of the truncation parameter.

\section{Conclusions}
\label{sec:conclusions}
Based on the initial convergence analysis and experiments presented in this paper, CROP algorithm emerges as an interesting approach for linear and nonlinear problems with weak nonlinearities. Although for highly nonlinear problems CROP algorithm often fails to converge, its variant rCROP can behave even better than Anderson Acceleration method. Further theoretical and numerical studies of CROP algorithm and its variants, in particular for challenging large scale computational chemistry problems,  are the subject of an ongoing work. 

\vspace{-0.1in}
\section*{Acknowledgments}
The authors would like to thank Eric de Sturler, Tom Werner and Mark Embree for insightful discussions and helpful suggestions to this project. This work was supported by the National Science Foundation through
the awards DMS--2144181 and DMS--2324958.

\vspace{-0.1in}
\bibliographystyle{siamplain}
\bibliography{crop_paper}

\begin{thebibliography}{10}

\bibitem{Ait1926}
{\sc A.~C. Aitken}, {\em On {B}ernoulli's {N}umerical {S}olution of {A}lgebraic
  {E}quations}, Proc. R. Soc. Edinb., 46 (1926), pp.~289--305.

\bibitem{And65}
{\sc D.~G. Anderson}, {\em Iterative procedures for nonlinear integral
  equations}, J. ACM, 12 (1965), pp.~547--560.

\bibitem{And19}
{\sc D.~G.~M. Anderson}, {\em Comments on ``{A}nderson acceleration, mixing and
  extrapolation''}, Numer. Algorithms, 80 (2019), pp.~135--234.

\bibitem{BanSP16}
{\sc A.~S. Banerjee, P.~Suryanarayana, and J.~E. Pask}, {\em Periodic {P}ulay
  method for robust and efficient convergence acceleration of self-consistent
  field iterations}, Chem. Phys. Lett., 647 (2016), pp.~31--35.

\bibitem{Bre70}
{\sc C.~Brezinski}, {\em Application de l'{$\varepsilon $}-algorithme \`a la
  r\'{e}solution des syst\`emes non lin\'{e}aires}, C. R. Acad. Sci. Paris
  S\'{e}r. A-B, 271 (1970), pp.~A1174--A1177.

\bibitem{Bre00}
{\sc C.~Brezinski}, {\em Convergence acceleration during the 20th century}, in
  Numerical analysis 2000, Vol. II: Interpolation and extrapolation, vol.~122
  of J. Comput. Appl. Math., 2000, pp.~1--21.

\bibitem{Bre2022}
{\sc C.~Brezinski, S.~Cipolla, M.~Redivo-Zaglia, and Y.~Saad}, {\em Shanks and
  {A}nderson-type acceleration techniques for systems of nonlinear equations},
  IMA J. Numer. Anal., 42 (2022), pp.~3058--3093.

\bibitem{Brezinski2014}
{\sc C.~Brezinski and M.~Redivo-Zaglia}, {\em The simplified topological
  $\varepsilon$-algorithms for accelerating sequences in a vector space}, SIAM
  J. Sci. Comput., 36 (2014), pp.~A2227--A2247.

\bibitem{Bre20}
{\sc C.~Brezinski and M.~Redivo-Zaglia}, {\em Extrapolation and rational
  approximation}, The Works of the Main Contributors, Springer Nature, Cham,
  Switzerland,  (2020).

\bibitem{BreRZS18}
{\sc C.~Brezinski, M.~Redivo-Zaglia, and Y.~Saad}, {\em Shanks sequence
  transformations and {A}nderson acceleration}, SIAM Rev., 60 (2018),
  pp.~646--669.

\bibitem{Cabay1976}
{\sc S.~Cabay and L.~W. Jackson}, {\em A polynomial extrapolation method for
  finding limits and antilimits of vector sequences}, SIAM J. Numer. Anal., 13
  (1976), pp.~734--752.

\bibitem{CanLB00a}
{\sc E.~Canc{\`e}s and C.~L. Bris}, {\em Can we outperform the {DIIS} approach
  for electronic structure calculations?}, Int. J. Quantum Chem., 79 (2000),
  pp.~82--90.

\bibitem{CanLB00b}
{\sc E.~Canc\`es and C.~L. Bris}, {\em On the convergence of {SCF} algorithms
  for the {H}artree-{F}ock equations}, M2AN Math. Model. Numer. Anal., 34
  (2000), pp.~749--774.

\bibitem{Cances2020}
{\sc E.~Canc{\`e}s, G.~Kemlin, and A.~Levitt}, {\em {Convergence analysis of
  direct minimization and self-consistent iterations}}, {SIAM} J. Matrix Anal.
  Appl., 42 (2021), pp.~243--274.

\bibitem{Chen22}
{\sc K.~Chen and C.~Vuik}, {\em Composite anderson acceleration method with two
  window sizes and optimized damping}, Int. J. Numer. Meth. ENG., 123 (2022),
  pp.~5964--5985.

\bibitem{CheK19}
{\sc X.~Chen and C.~T. Kelley}, {\em Convergence of the {EDIIS} algorithm for
  nonlinear equations}, SIAM J. Sci. Comput., 41 (2019), pp.~A365--A379.

\bibitem{ChuDE20}
{\sc M.~Chupin, M.-S. Dupuy, G.~Legendre, and {\'E}.~S{\'e}r{\'e}}, {\em
  Convergence analysis of adaptive {DIIS} algorithms with application to
  electronic ground state calculations}, ESAIM Math. Model. Numer. Anal., 55
  (2021), pp.~2785--2825.

\bibitem{Hans2022}
{\sc H.~De~Sterck and Y.~He}, {\em Linear asymptotic convergence of anderson
  acceleration: Fixed-point analysis}, SIAM J. Matrix Anal. Appl., 43 (2022),
  pp.~1755--1783.

\bibitem{Hans2024}
{\sc H.~De~Sterck, Y.~He, and O.~A. Krzysik}, {\em Anderson acceleration as a
  {K}rylov method with application to convergence analysis}, J. Sci. Comput.,
  99 (2024), pp.~Paper No. 12, 30.

\bibitem{Edd79}
{\sc R.~P. Eddy}, {\em Extrapolation to the limit of a vector sequence}, in
  Information Linkage Between Applied Mathematics and Industry, P.~C.~C. Wang,
  ed., Academic Press, New York, 1979, pp.~387--396.

\bibitem{Eisenstat1983}
{\sc S.~C. Eisenstat, H.~C. Elman, and M.~H. Schultz}, {\em Variational
  iterative methods for nonsymmetric systems of linear equations}, SIAM J.
  Numer. Anal., 20 (1983), pp.~345--357.

\bibitem{EttJ15}
{\sc P.~Ettenhuber and P.~J{\o}rgensen}, {\em Discarding information from
  previous iterations in an optimal way to solve the coupled cluster amplitude
  equations}, J. Chem. Theory. Comput., 11 (2015), pp.~1518--1524.

\bibitem{EvaPRX20}
{\sc C.~Evans, S.~Pollock, L.~G. Rebholz, and M.~Xiao}, {\em A proof that
  {A}nderson acceleration improves the convergence rate in linearly converging
  fixed-point methods (but not in those converging quadratically)}, SIAM J.
  Numer. Anal., 58 (2020), pp.~788--810.

\bibitem{Eye96}
{\sc V.~Eyert}, {\em A comparative study on methods for convergence
  acceleration of iterative vector sequences}, J. Comput. Phys., 124 (1996),
  pp.~271--285.

\bibitem{FanS09}
{\sc H.~Fang and Y.~Saad}, {\em Two classes of multisecant methods for
  nonlinear acceleration}, Numer. Linear Algebra Appl., 16 (2009),
  pp.~197--221.

\bibitem{He2024}
{\sc H.~He, Z.~Tang, S.~Zhao, Y.~Saad, and Y.~Xi}, {\em nl{TGCR}: a class of
  nonlinear acceleration procedures based on conjugate residuals}, SIAM J.
  Matrix Anal. Appl., 45 (2024), pp.~712--743.

\bibitem{Jbilou2000}
{\sc K.~Jbilou and H.~Sadok}, {\em Vector extrapolation methods. applications
  and numerical comparison}, J. Comput. Appl. Math., 122 (2000), pp.~149 --
  165.
\newblock Numerical Analysis in the 20th Century Vol. II: Interpolation and
  Extrapolation.

\bibitem{LinLY19}
{\sc L.~Lin, J.~Lu, and L.~Ying}, {\em Numerical methods for {K}ohn-{S}ham
  density functional theory}, Acta Numer., 28 (2019), pp.~405--539.

\bibitem{Ni09}
{\sc P.~Ni}, {\em Anderson acceleration of fixed-point iteration with
  applications to electronic structure computations}, PhD thesis, Worcester
  Polytechnic Institute, 2009.

\bibitem{NiW10}
{\sc P.~Ni and H.~F. Walker}, {\em A linearly constrained least-squares problem
  in electronic structure computations}, ICCES. v7 i1,  (2010), pp.~43--49.

\bibitem{Oos2000}
{\sc C.~W. Oosterlee and T.~Washio}, {\em Krylov subspace acceleration of
  nonlinear multigrid with application to recirculating flows}, J. Sci.
  Comput., 21 (2000), pp.~1670--1690.

\bibitem{Pol21}
{\sc S.~Pollock and L.~G. Rebholz}, {\em {Anderson acceleration for contractive
  and noncontractive operators}}, IMA J. Numer. Anal., 41 (2021),
  pp.~2841--2872.

\bibitem{Pol23}
{\sc S.~Pollock and L.~G. Rebholz}, {\em Filtering for anderson acceleration},
  SIAM J. Sci. Comput., 45 (2023), pp.~A1571--A1590.

\bibitem{PolRX19}
{\sc S.~Pollock, L.~G. Rebholz, and M.~Xiao}, {\em Anderson-accelerated
  convergence of {P}icard iterations for incompressible {N}avier--{S}tokes
  equations}, SIAM J. Numer. Anal., 57 (2019), pp.~615--637.

\bibitem{Pul80}
{\sc P.~Pulay}, {\em Convergence acceleration of iterative sequences. {T}he
  case of {SCF} iteration}, Chem. Phys. Lett., 73 (1980), pp.~393--398.

\bibitem{Pul82}
{\sc P.~Pulay}, {\em Improved {SCF} convergence acceleration}, J. Comput.
  Chem., 3 (1982), pp.~556--560.

\bibitem{Reb23}
{\sc L.~G. Rebholz and M.~Xiao}, {\em The effect of anderson acceleration on
  superlinear and sublinear convergence}, SIAM J. Sci. Comput., 96 (2023).

\bibitem{RohS11}
{\sc T.~Rohwedder and R.~Schneider}, {\em An analysis for the {DIIS}
  acceleration method used in quantum chemistry calculations}, J. Math. Chem.,
  49 (2011), pp.~1889--1914.

\bibitem{Saad2003}
{\sc Y.~Saad}, {\em Iterative methods for sparse linear systems}, Society for
  Industrial and Applied Mathematics, Philadelphia, PA, second~ed., 2003.

\bibitem{Shanks1955}
{\sc D.~Shanks}, {\em Non-linear transformations of divergent and slowly
  convergent sequences}, J. Math. Phys, 34 (1955), pp.~1--42.

\bibitem{SidFS86}
{\sc A.~Sidi, W.~F. Ford, and D.~A. Smith}, {\em Acceleration of convergence of
  vector sequences}, SIAM J. Numer. Anal., 23 (1986), pp.~178--196.

\bibitem{SurPP19}
{\sc P.~Suryanarayana, P.~P. Pratapa, and J.~E. Pask}, {\em Alternating
  {A}nderson--{R}ichardson method: {A}n efficient alternative to preconditioned
  {K}rylov methods for large, sparse linear systems}, Comput. Phys. Commun.,
  234 (2019), pp.~278--285.

\bibitem{TotK15}
{\sc A.~Toth and C.~T. Kelley}, {\em Convergence analysis for {A}nderson
  acceleration}, SIAM J. Numer. Anal., 53 (2015), pp.~805--819.

\bibitem{WalN11}
{\sc H.~F. Walker and P.~Ni}, {\em Anderson acceleration for fixed-point
  iterations}, SIAM J. Numer. Anal., 49 (2011), pp.~1715--1735.

\bibitem{Washio1997}
{\sc T.~Washio and C.~W. Oosterlee}, {\em Krylov subspace acceleration for
  nonlinear multigrid schemes}, Electron. Trans. Numer. Anal., 6 (1997),
  pp.~271--290.

\bibitem{Wei23}
{\sc F.~Wei, C.~Bao, Y.~Liu, and G.~Yang}, {\em Convergence analysis for
  restarted anderson mixing and beyond}, \texttt{arXiv:2307.02062},  (2023).
\newblock \url{https://arxiv.org/abs/2307.02062}.

\bibitem{Wynn1956}
{\sc P.~Wynn}, {\em On a device for computing the $e_m({S}_n)$ transformation},
  Mathematical Tables and Other Aids to Computation, 10 (1956), pp.~91--96.

\bibitem{Wynn1962}
{\sc P.~Wynn}, {\em Acceleration techniques for iterated vector and matrix
  problems}, Math. Comp., 16 (1962), pp.~301--322.

\bibitem{Ziolkowski2008}
{\sc M.~Zi{\'o}{\l}kowski, V.~Weijo, P.~J{\o}rgensen, and J.~Olsen}, {\em An
  efficient algorithm for solving nonlinear equations with a minimal number of
  trial vectors: Applications to atomic-orbital based coupled-cluster theory},
  J. Chem. Phys, 128 (2008), p.~204105.

\end{thebibliography}


\begin{thebibliography}{1}

\bibitem{Arm66}
{\sc L.~Armijo}, {\em {Minimization of functions having Lipschitz continuous
  first partial derivatives.}}, Pacific Journal of Mathematics, 16 (1966),
  pp.~1 -- 3.

\bibitem{Julia-2017}
{\sc J.~Bezanson, A.~Edelman, S.~Karpinski, and V.~B. Shah}, {\em Julia: A
  fresh approach to numerical computing}, SIAM {R}eview, 59 (2017), pp.~65--98,
  \url{https://doi.org/10.1137/141000671},
  \url{https://epubs.siam.org/doi/10.1137/141000671}.

\bibitem{DavH11}
{\sc T.~A. Davis and Y.~Hu}, {\em The university of florida sparse matrix
  collection}, ACM Transactions on Mathematical Software (TOMS), 38 (2011),
  pp.~1--25.

\bibitem{Hans2022}
{\sc H.~De~Sterck and Y.~He}, {\em Linear asymptotic convergence of anderson
  acceleration: Fixed-point analysis}, SIAM J. Matrix Anal. Appl., 43 (2022),
  pp.~1755--1783.

\bibitem{Hans2024}
{\sc H.~De~Sterck, Y.~He, and O.~A. Krzysik}, {\em Anderson acceleration as a
  {K}rylov method with application to convergence analysis}, J. Sci. Comput.,
  99 (2024), pp.~Paper No. 12, 30.

\bibitem{He2024}
{\sc H.~He, Z.~Tang, S.~Zhao, Y.~Saad, and Y.~Xi}, {\em nl{TGCR}: a class of
  nonlinear acceleration procedures based on conjugate residuals}, SIAM J.
  Matrix Anal. Appl., 45 (2024), pp.~712--743.

\bibitem{Jarlebring2012}
{\sc E.~Jarlebring, W.~Michiels, and K.~Meerbergen}, {\em The Infinite Arnoldi
  Method and an Application to Time-Delay Systems with Distributed Delays},
  Springer Berlin Heidelberg, Berlin, Heidelberg, 2012, pp.~229--239,
  \url{https://doi.org/10.1007/978-3-642-25221-1_17},
  \url{https://doi.org/10.1007/978-3-642-25221-1_17}.

\bibitem{li2000}
{\sc D.-H. Li and M.~Fukushima}, {\em A derivative-free line search and global
  convergence of broyden-like method for nonlinear equations}, Optimization
  Methods and Software, 13 (2000), pp.~181--201,
  \url{https://doi.org/10.1080/10556780008805782},
  \url{https://doi.org/10.1080/10556780008805782},
  \url{https://arxiv.org/abs/https://doi.org/10.1080/10556780008805782}.

\end{thebibliography}

\end{document}


\maketitle

This Supplementary Material provides some details regarding implementation and validation of discussed methods, as well as some additional numerical examples.


\section{Implementation details}

All numerical experiments presented in this section have been implemented in Julia 1.7.1~\cite{Julia-2017} and carried out on an Intel Xeon 8-Core 3.00GHz machine with 32GB memory. Here, we briefly discuss an adaptive variant of CROP algorithm for nonlinear problems that substitutes some rCROP steps with CROP steps when the approximation is good. Moreover, the idea of line search is discussed.

\paragraph{\textbf{Restarted and Adaptive Methods on Real Residuals}}

Since real residuals require additional function evaluations, rCROP algorithm usually takes longer to execute the same number of steps than CROP algorithm. Following the idea presented in~\cite{He2024}, an adaptive method is running an rCROP step every few iterations to check the accuracy of CROP algorithm step by measuring the angle $\displaystyle \theta^{(k)}$ between the control residual $\displaystyle f_C^{(k)}$ and the real residual $\displaystyle  r_C^{(k)}$ of that step, i.e.,
$$\displaystyle  \cos\theta^{(k)}=\frac{(f_C^{(k)},r_C^{(k)})}{\|f_C^{(k)}\|\|r_C^{(k)}\|}.$$
%
If $\displaystyle \cos\theta^{(k)}>0.99$, the nonlinearity of the problem is considered small, and the result produced by CROP algorithm can be viewed as a good approximation of the exact solution. Otherwise, the problem is highly nonlinear and should be solved with rCROP algorithm instead.

\paragraph{\textbf{Line Search Refinement}}






The line search is an iterative process used to find the solution of $f(x)=0$
when a descent search direction is available. One commonly used line search method is based on the Armijo condition~\cite{Arm66}. Since the line search requires computing residuals explicitly, when used it should be employed with rCROP algorithm instead of CROP. For rCROP algorithm, the function evaluation gives the real residuals and thus the line search is possible.

Since CROP gives a Broyden-type approximated inverse Jacobian
$$\displaystyle G_C^{(k+1)}=\mathscr{X}_C^{(k+1)}[(\mathscr{F}_C^{(k+1)})^T\mathscr{F}_C^{(k+1)}]^{-1}(\mathscr{F}_C^{(k+1)})^T,$$ we can do the line search with this approximated direction. However, for multisecant methods, we can not guarantee that the approximate Newton direction will be a descent direction, and therefore, a line search may fail. As the approximated direction can have a very large error, the Armijo condition may become meaningless. If the line search with approximated Jacobian fails, a better preconditioner is needed, or CROP algorithm without the line search must be used.

In fact, the line search works very badly for CROP algorithm. For linear and nonlinear problems with weak nonlinearities, rCROP has essentially the similar form as CROP algorithm, meaning that the obtained result is optimal in the search space and the explicit line search step is not needed.

Another way is to do a line search without derivatives, for example by using discrete line search methods like bisection or Li-Fukushima derivative-free line search. Li-Fukushima derivative-free line search~\cite{li2000} is designed to work with global convergent Broyden-type method, which update satisfies
\begin{equation}\label{eq:LFDFLS}
\displaystyle
    \sigma_0\|\Delta x^{(k)}\|^2\le (1+\eta_k)\|f(x^{(k)})\|-\|f(x^{(k+1)})\|,
\end{equation}
where $\sigma_0>0$ and $\sum\limits_{k=0}^\infty \eta_k\le \eta<\infty$.

Let $\displaystyle \Delta x^{(k)}=\lambda^{(k)} p^{(k)}$, where $\lambda^{(k)}$ is the step size, and $\displaystyle p^{(k)}$ is the search direction given as $\displaystyle  p^{(k)}=G_C^{(k+1)}\widetilde f_C^{(k+1)}$. Then $$\displaystyle x^{(k)}=\widetilde x_C^{(k+1)} \mbox{ and } x^{(k+1)}=x_C^{(k+1)}.$$





\section{Additional experimental results}


\begin{example}[A Real World Problem]
\label{exp:Ex2}
\end{example}
%
Let us now consider the same linear problem $Ax = b$ with a symmetric positive definite matrix \texttt{cfd1} from SuiteSparse Matrix Collection~\cite{DavH11} and vector $b\in\mathbb{R}^{70656}$ with its first entry $1$ and others $0$. The corresponding convergence results for different methods are shown in \Cref{fig:lin5}. Notice that choice of $m=2$ is the optimal for both CROP and CROP-Anderson algorithm.



\begin{figure}[!htb]
    \centering
    \begin{minipage}{0.48\textwidth}
        \centering
        \includegraphics[scale=0.38]{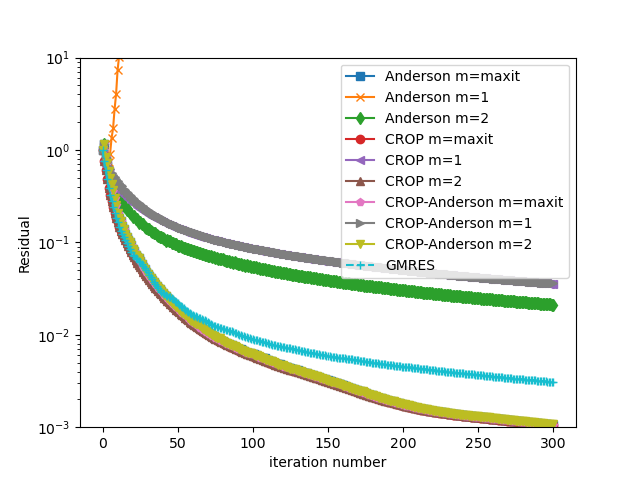}
        \caption{Convergence for a linear problem in Example \eqref{exp:Ex2}.}
    \label{fig:lin5}
    \end{minipage}%
    \hfill
    \begin{minipage}{0.48\textwidth}
        \centering
         \includegraphics[scale=0.38]{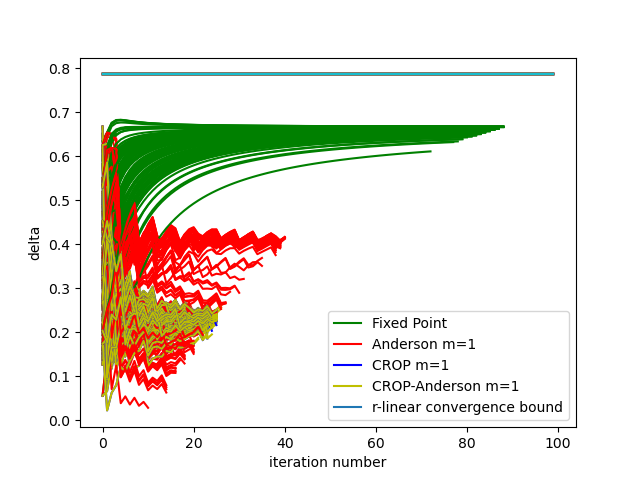}
         \caption{r-linear convergence factor for a small linear system in Example \eqref{exp:Ex3}.}
     \label{fig:slin_r}
    \end{minipage}
\end{figure}



\begin{example}[A Small Linear Problem]
\label{exp:Ex3}
\end{example}

In this example, we consider a linear system $Ax = b$ ~\cite[Problem 1]{Hans2022,Hans2024} with 
\begin{equation}
\label{eq:SLS}
A=\begin{bmatrix}1/3&-1/4\\0&2/3\end{bmatrix}\in\mathbb{R}^{2\times2} \quad \mbox{ and } \quad b=\begin{bmatrix}0\\0\end{bmatrix} \in \mathbb{R}^2.
\end{equation}

We choose $f :=b-Ax$ \ and \ $g := x+f=Mx$ \ with 
$$M=\begin{bmatrix}2/3&1/4\\0&1/3\end{bmatrix}\in\mathbb{R}^{2\times2}.$$

We set $maxit=100$, $tol=10^{-16}$ and run fixed-point iteration,
Anderson Acceleration, CROP, and CROP-Anderson algorithms
with $m=1$. The components $x_1^{(0)}$ and $ x_2^{(0)}$ of the initial vector $x^{(0)}$ are chosen randomly between $[-0.5,0.5]$. The r-linear convergence factors are shown in \Cref{fig:slin_r}. The damping parameters $\gamma^{(k)}$ for Anderson and CROP/CROP-Anderson methods are presented in \Cref{fig:slin_ba} and \Cref{fig:slin_bc}. Note that they have different oscillation behaviors. For Anderson Acceleration method, $\gamma^{(k)}$ display the same oscillation pattern independent on the choice of initial vector, whereas two different kinds of oscillations are
occurring for CROP algorithm.

%
\begin{figure}[!ht]
    \centering
    \begin{subfigure}[b]{0.49\textwidth}
        \centering
        \includegraphics[scale=0.39]{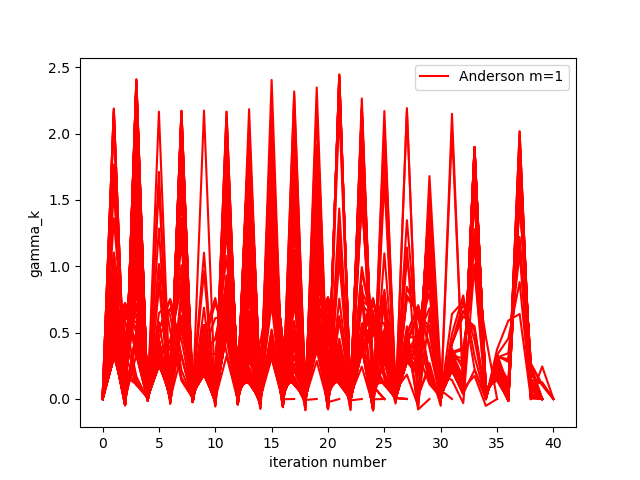}
        \caption{Anderson Acceleration}
        \label{fig:slin_ba}
    \end{subfigure}
    \begin{subfigure}[b]{0.49\textwidth}
        \centering
        \includegraphics[scale=0.39]{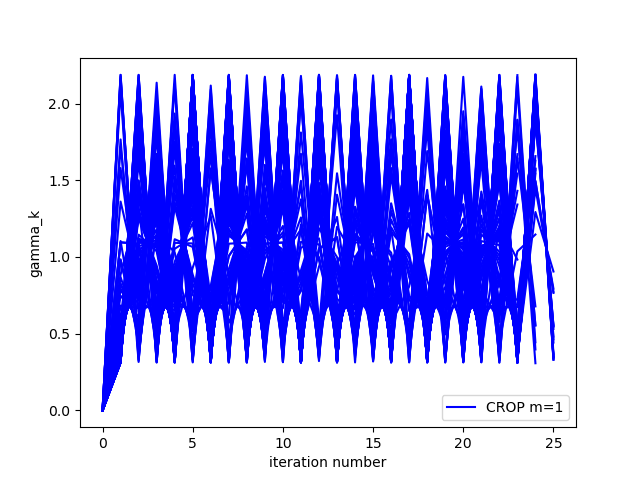}
        \caption{CROP Algorithm}
        \label{fig:slin_bc}
    \end{subfigure} 
    \caption{$\gamma_k$ for (a) Anderson Acceleration and (b) CROP Algorithm in Example~\eqref{exp:Ex3}.}
    \label{fig:slin_b}
\end{figure}
%
We show the r-linear convergence factors of the fixed point iteration, Anderson acceleration, CROP, and CROP-Anderson algorithms in \Cref{fig:slin_cf}, \Cref{fig:slin_ca}, \Cref{fig:slin_cc} and \Cref{fig:slin_cca}. We can see that the r-linear convergence factor is the function of the angle $\displaystyle \arctan\frac{x_2^{(0)}}{x_1^{(0)}}$.  The factor for CROP and CROP-Anderson oscillates much faster than the factor for Anderson acceleration.

\begin{figure}[!ht]
    \centering
    \begin{subfigure}[b]{0.49\textwidth}
        \centering
        \includegraphics[width=0.8\textwidth]{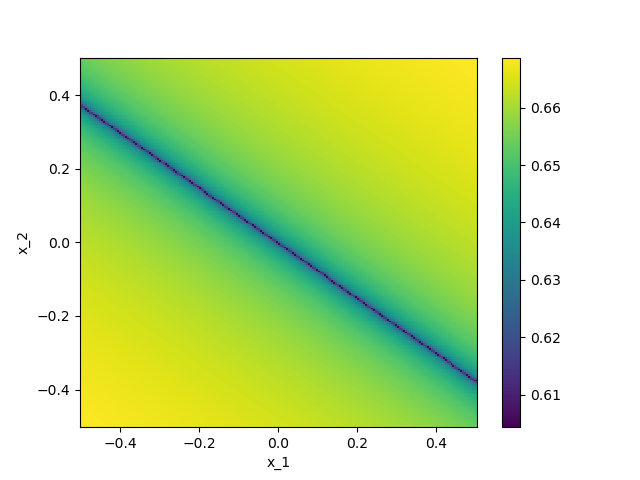}
        \caption{Fixed-point Iteration}
        \label{fig:slin_cf}
    \end{subfigure}
    \hfill
    \begin{subfigure}[b]{0.49\textwidth}
        \centering
        \includegraphics[width=0.8\textwidth]{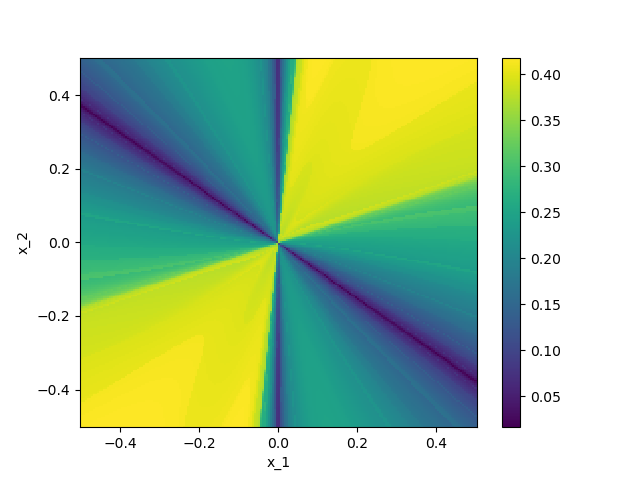}
        \caption{Anderson Acceleration}
        \label{fig:slin_ca}
    \end{subfigure} \\
    \begin{subfigure}[b]{0.49\textwidth}
        \centering
        \includegraphics[width=0.8\textwidth]{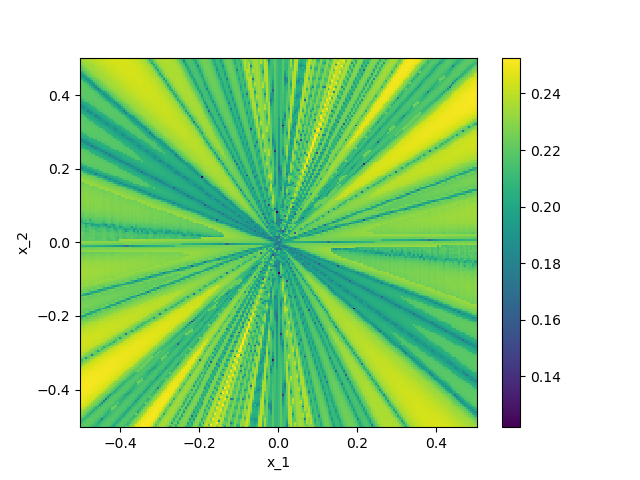}
        \caption{CROP Algorithm}
        \label{fig:slin_cc}
    \end{subfigure}
    \begin{subfigure}[b]{0.49\textwidth}
        \centering
        \includegraphics[width=0.8\textwidth]{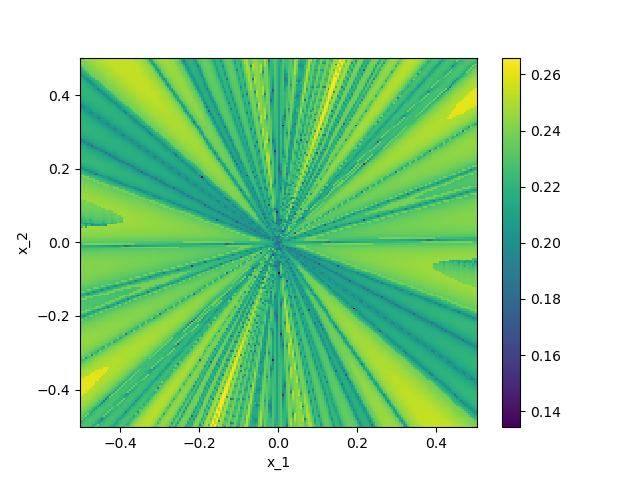}
        \caption{CROP-Anderson Algorithm}
        \label{fig:slin_cca}
    \end{subfigure} 
    \caption{r-linear convergence factors in Example \eqref{exp:Ex3}.}
    \label{fig:slin_c}
\end{figure}
%

\begin{example}[Nonlinear Eigenvalue Problem]
\label{exp:Ex6}
\end{example}
%
In this example, we consider a time-delay system with a distributed delay discussed in~\cite[Example 2]{Jarlebring2012} where the distributed term is a Gaussian distribution, i.e.,

\begin{equation}\label{eq:depsys}
    \begin{split}
    \dot x(t)=&\frac{1}{10}\begin{pmatrix}25&28&-5\\18&3&3\\-23&-14&35\end{pmatrix} x(t)
        +\frac{1}{10}\begin{pmatrix}17&7&-3\\-24&-21&-2\\20&7&4\end{pmatrix} x(t-\tau)\\
        &+\int\limits_{-\tau}^0\begin{pmatrix}14&-13&4\\14&7&10\\6&16&17\end{pmatrix}
        \frac{e^{(s+\frac12)^2}-e^{\frac14}}{10}x(t+s)ds
    \end{split}
\end{equation}

The eigenvalues of \eqref{eq:depsys} are given as the solutions of the nonlinear eigenvalue problem (NEP) $\displaystyle M(\lambda)v=0$ associated with a matrix-valued function
$$\displaystyle M(\lambda)=-\lambda I+A_0+A_1 e^{-\lambda\tau}+\int_{-\tau}^0 F(s) e^{\lambda s}ds,$$
where
$$\displaystyle A_0=\frac{1}{10}\begin{pmatrix}25&28&-5\\18&3&3\\-23&-14&35\end{pmatrix}, \quad A_1=\frac{1}{10}\begin{pmatrix}17&7&-3\\-24&-21&-2\\20&7&4\end{pmatrix},$$
$$F(s)=\begin{pmatrix}14&-13&4\\14&7&10\\6&16&17\end{pmatrix}
        \frac{e^{(s+\frac12)^2}-e^{\frac14}}{10}.$$
\noindent    
Consider $\displaystyle x=\begin{bmatrix}v&\lambda\end{bmatrix}^T$. Then, the nonlinear eigenvalue problem associated with \eqref{eq:depsys} can be written as a nonlinear equation of the form 
%
$$\displaystyle f(x) = f\Big(\begin{bmatrix}v\\\lambda\end{bmatrix}\Big) :=\begin{bmatrix}M(\lambda)v\\ c^Hv-1\end{bmatrix} = 0, $$
where $c$ is used to normalize the eigenvector $v$ and we choose $c$ the vector of ones here.
Let $\displaystyle g := x+\beta f$ with $\displaystyle \beta=0.1$. We run 
Anderson Acceleration, CROP, CROP-Anderson algorithms with 
$\displaystyle maxit=100$, $\displaystyle tol=10^{-10}$, $\displaystyle m=maxit$, $\displaystyle m=3$ and $\displaystyle m=5$. The initial vector $x^{(0)}$ is chosen as the vector of ones. The convergence of all methods is shown in \Cref{fig:dep}. 

\begin{figure}[!ht]
    \begin{subfigure}[b]{0.49\textwidth}
        \centering
\includegraphics[width=1.0\textwidth,height=0.8\textwidth]{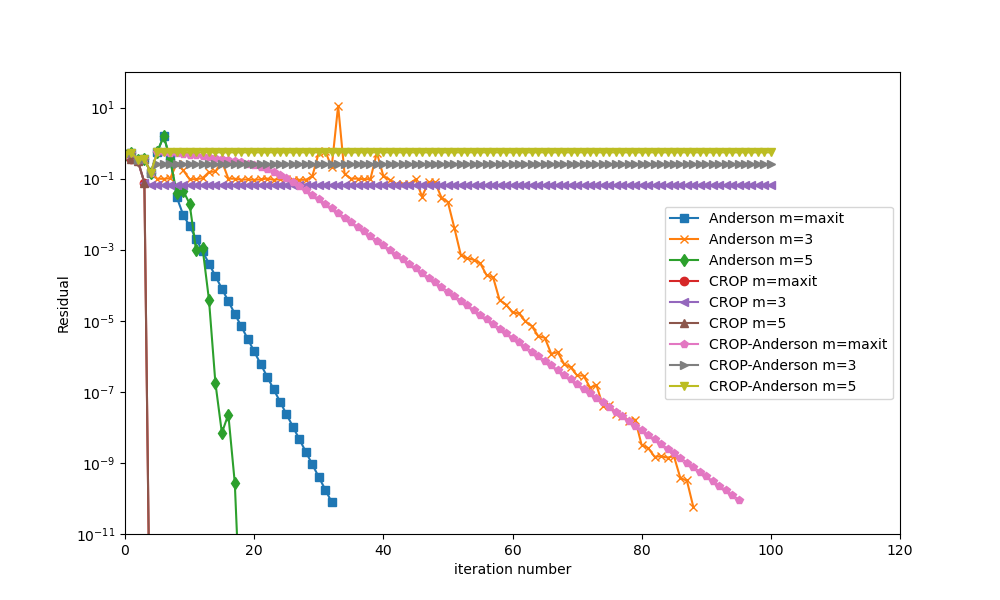}
    \caption{control residuals}
    \label{fig:crop_dep}
    \end{subfigure}
    \begin{subfigure}[b]{0.49\textwidth}
        \centering
\includegraphics[width=1.0\textwidth,height=0.8\textwidth]{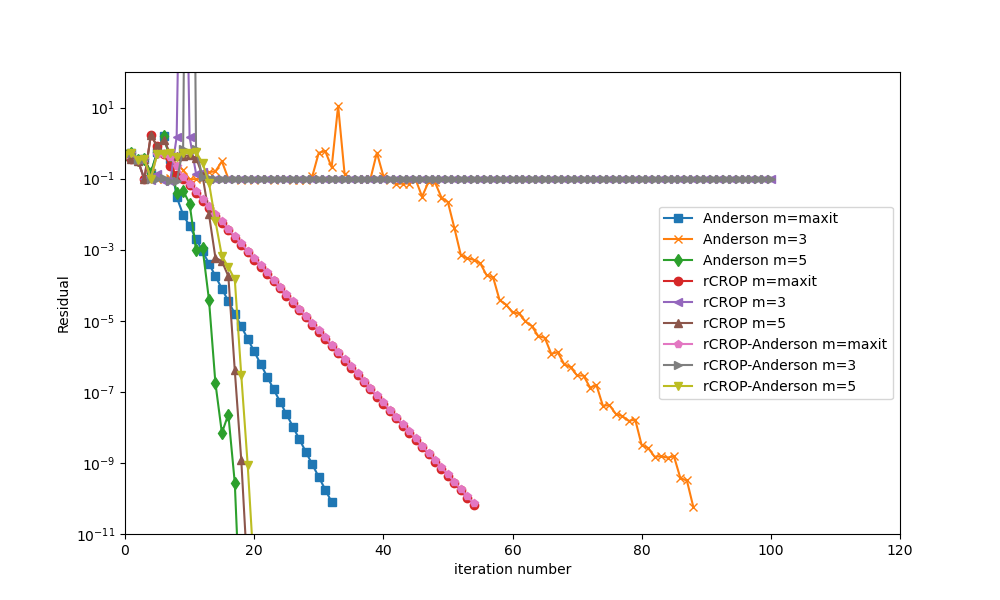}
    \caption{real residuals}
    \label{fig:rcrop_dep}
    \end{subfigure} 
    \caption{A nonlinear eigenvalue problem from Example \eqref{exp:Ex6} with (a) control residuals and (b) real residuals.}
    \label{fig:dep}
\end{figure}

\Cref{fig:crop_dep} illustrates the breakdown of CROP algorithm and the problems associated with using control residuals $\displaystyle f_C^{(k)}$. The eigenvector of this NEP is of length $3$, and the residual vector of dimension $n=4$. The problem is nonlinear and needs more than $n$ iterations to converge. When the least-squares problem 
for finding the coefficients of the CROP iterates is of size $m_C^{(k)}=4$, it has a solution and which makes $f_C^{(k)} = 0$. CROP algorithm for this problem breaks down at the $4$-th step. 
Meanwhile, rCROP and rCROP-Anderson method converge well when $m=3$ and $m=5$, respectively, see \Cref{fig:rcrop_dep}.






 







\bibliographystyle{siamplain}
\bibliography{crop_paper}